\newtheorem{theorem}{Theorem}[section]
\newtheorem{lem}[theorem]{Lemma}
\newtheorem{df}[theorem]{Definition}
\newtheorem{prop}[theorem]{Proposition}
\newtheorem{cor}[theorem]{Corollary}
\newtheorem{conj}[theorem]{Conjecture}
\numberwithin{equation}{section}
\renewcommand{\theequation}{{\thesection}.{\arabic{equation}}}
\def\beq{\begin{equation}}
\def\eeq{\end{equation}}
\def\pbeq{\begin{equation*}}
\def\peeq{\end{equation*}}
\def\be{\beq\begin{array}{c}}
\def\ee{\end{array}\eeq}
\def\A{\rm A}
\def\Ab{\bar{\rm A}}
\def\bt{\,\hbox{\small$\boxtimes$}\,}
\def\comp{\ts{\scriptstyle\circ}\ts}
\def\d{\partial}
\def\Eh{\hat E}
\def\Ep{E^{\ts\prime}}
\def\gl{\glg}
\def\G{\Gc\hr{\Cbb^m \otimes \Cbb^n}}
\def\GD{\Gc\Dc\hr{\Cbb^m \otimes \Cbb^n}}
\def\GL{\operatorname{GL}}
\def\h{\mathfrak{h}}
\def\H{\Hc\hr{\Cbb^m \otimes \Cbb^n}}
\def\HD{\Hc\Dc\hr{\Cbb^m \otimes \Cbb^n}}
\def\I{{\rm I}}
\def\Ib{\bar{\rm I}}
\def\J{{\rm J}}
\def\Jb{\bar{\rm J}}
\def\Jp{{\rm J}^\prime}
\def\Jpb{\bar{\rm J}^\prime}
\def\ka{\kappa}
\def\la{\lambda}
\def\lcd{\ts,\ldots,}
\def\le{\leqslant}
\def\n{\mathfrak{n}}
\def\np{\n^{\ts\prime}}
\def\ov{\overline}
\def\p{\mathfrak{p}}
\def\phi{\varphi}
\def\P{\Pc\hr{\Cbb^m \otimes \Cbb^n}}
\def\PD{\Pc\Dc\hr{\Cbb^m \otimes \Cbb^n}}
\def\q{\mathfrak{q}}
\def\qp{\q^{\ts\prime}}
\def\s{\sigma}
\def\sl{\slg}
\def\so{\mathfrak{so}}
\def\sp{\mathfrak{sp}}
\def\th{\theta}
\def\ts{\hskip1pt}
\def\Tp{T^{\ts\prime}}
\def\U{\operatorname{U}}
\def\Uhb{\,\overline{\!\U(\h)\!\!\!}\,\,\,}
\def\TPhi{\widetilde\Phi}
\def\xic{\check\xi}
\def\Y{\operatorname{Y}}
\def\Zbb{\mathbb{Z}}
\newenvironment{proof}{{\flushleft\it Proof:}} {\hfill$\square$ \\}
\begin{document}


\begin{center}
{\Large\bf Rational representations of the Yangian $\Y(\gl_n)$}
\bigskip
\bigskip

\noindent
A. Shapiro
\bigskip
\medskip

{
\footnotesize
\textit{Department of Mathematics, University of California, Berkeley, CA, 94720;} \\
\smallskip
\textit{Institute of Theoretical \& Experimental Physics, 117259, Moscow, Russia;} \\
\medskip
alexander.m.shapiro@gmail.com
}

\end{center}
\bigskip

\begin{abstract}
We construct a series of rational representations of $\Y(\gl_n)$ and intertwining operators between them.
We find explicit expressions for the images of highest-weight vectors under the intertwining operators.
Finally, we state a conjecture that all irreducible finite-dimensional rational $\Y(\gl_n)$-modules arise
as images of the constructed intertwining operators.
\end{abstract}
\bigskip


\section*{\bf\normalsize 0.\ Introduction}

There are two classical approaches to the representation theory of the Lie algebra $\gl_n$. The first one, known as the theory of highest weight, suggests a parametrization of all complex finite-dimensional irreducible $\gl_n$-modules by $n$-tuples of complex numbers $\la = \hr{\la_1, \dots, \la_n}$. These $n$-tuples are called weights and describe the action of the Cartan subalgebra $\h \in \gl_n$ on the module. The second approach is based on the Schur-Weyl duality between the representations of the symmetric group $\Sg_m$ and the Lie algebra $\gl_n$. It produces a set of $\gl_n$-modules called the Schur modules, which are certain submodules (or quotient modules) of tensor products of the defining $\gl_n$-module $\Cbb^n$. Let us borrow terminology from the representation theory of the Lie group $\GL_n$, and call a $\gl_n$-module polynomial (respectively rational) if it is isomorphic to a submodule of $\Cbb^{\otimes N}$ (respectively $\Cbb^{\otimes N_1} \otimes (\Cbb^*)^{\otimes N_2}$). Then, the Schur modules exhaust all the polynomial $\gl_n$-modules.

Similar approaches can be taken towards the representation theory of the Yangian $\Y(\gl_n)$. The highest weight theory for Yangians was developed by Drinfeld. Let us call two finite-dimensional representations of the algebra $\Y(\gl_n)$ similar if they differ by an automorphism of the form \eqref{fut}. Up to similarity the irreducible finite-dimensional $Y(\gl_n)$-modules were classified in \cite{D3}. Due to this classification every irreducible finite-dimensional $\Y(\gl_n)$-module is described by a set of Drinfeld polynomials, serving the same goals as weights in the representation theory of $\gl_n$. Later on, analogous results on the representations of shifted Yangians and $W$-algebras were obtained in \cite{BK}.

Another approach to the representation theory of $\Y(\gl_n)$ was considered in the works \cite{KN1,KN2} and is based on the $(\GL_m,\,\gl_n)$ Howe duality (see \cite{H1,H2}). The main role is played by a certain functor $\Ec_m$ from the category of $\gl_m$-modules to the category of $\Y(\gl_n)$-modules. This functor appeared as a composition of Drinfeld (see \cite{D2}) and Cherednik (see \cite{C,AST}) functors, and can be regarded as a reformulation of the Olshanski centralizer construction (see \cite{O1,O2}). Then the following steps were taken in \cite{KN1,KN2}. First, the functor $\Ec_m$ was applied to the Verma modules of the algebra $\gl_m$ which gave rise to a series of \emph{standard} representations of the Yangian $\Y(\gl_n)$. Second, with the help of the theory of Zhelobenko operators for Mickelsson algebras (see \cite{Z1,Z2,K,KO}) certain intertwining operators between the standard $\Y(\gl_n)$-modules were constructed. Finally, in the works \cite{KN5,KNP} it was shown that all irreducible finite-dimensional $\Y(\gl_n)$-modules considered up to similarity can be obtained as the images of the intertwining operators constructed in \cite{KN1,KN2}. An analogous result for representations of quantum affine algebras appeared earlier in \cite{AK}.

In analogy with the $\gl_n$-case, let us call a representation of the Yangian $\Y(\gl_n)$ \emph{polynomial} if it is a subquotient of a tensor product of vector representations of $\Y(\gl_n)$. Note that all $\Y(\gl_n)$-modules constructed in \cite{KN1,KN2} are polynomial. Moreover, any polynomial $\Y(\gl_n)$-module appears this way. Let us call a representation of the Yangian $\Y(\gl_n)$ \emph{rational} if it is a subquotient of a tensor product of vector and dual vector representations of $\Y(\gl_n)$. Earlier, the irreducible rational representations of $\Y(\gl_n)$ associated with skew Young diagrams were investigated by Nazarov in \cite{N}. In the present paper we try to generalize the approach of Khoroshkin and Nazarov in order to obtain rational representations of the algebra $\Y(\gl_n)$.

First, we consider a modification $\Ec_{p,q}$ of the functor $\Ec_m$, based on the $(\U_{p,q},\,\gl_n)$ Howe duality (see \cite{EHW}, \cite{EP}, \cite{KV}). Second, we apply the functor $\Ec_{p,q}$ to the Verma modules of $\gl_m$ which gives certain \emph{standard rational} modules. Third, with the help of technique developed for twisted Yangians $\Y(\so_{2n})$, $\Y(\sp_{2n})$ in \cite{KN3}, \cite{KN4} we construct intertwining operators between obtained tensor products and compute the images of highest-weight vectors under intertwining operators. Next, using the results of \cite{KN5}, we observe that (under some conditions on the parameters of standard rational modules) the image of the certain intertwining operator is an irreducible rational $\Y(\gl_n)$-module. Finally, we state as a conjecture that all irreducible rational $\Y(\gl_n)$-modules can be obtained by this construction. We return to this problem in the forthcoming publication \cite{KNS}.

\section*{\bf\normalsize 1.\ Basics}
\setcounter{section}{1}
\setcounter{theorem}{0}

\subsection*{\it\normalsize 1.1.\ Yangian $\Y(\gl_n)$}

The Yangian $\Y(\gl_n)$ is a deformation in the class of Hopf algebras of the universal enveloping algebra of the Lie algebra $\gl_n[u]$ of polynomial current, see for instance \cite{D1}. The unital associative algebra $\Y(\gl_n)$ is generated by the family
$$
T_{ij}^{(1)},T_{ij}^{(2)}, \ldots \quad\text{where}\quad i,j=1,\dots,n.
$$
Consider the generating functions
\begin{equation}
\label{tser}
T_{ij}(u)=\de_{ij}+T_{ij}^{(1)}u^{-1}+T_{ij}^{(2)}u^{-2}+\dots\in\Y(\gl_n)[[u^{-1}]]
\end{equation}
with formal parameter $u$. Then the defining relations in $\Y(\gl_n)$ can be written as
\begin{equation}
\label{yangrel}
(u-v)\cdot[T_{ij}(u),T_{kl}(v)]=T_{kj}(u)T_{il}(v)-T_{kj}(v)T_{il}(u)
\end{equation}
where $i, j, k, l=1,\dots,n$.

The above relations imply that for any $z\in\Cbb$ assignments
\begin{equation}
\label{tauz}
\tau_z\colon T_{ij}(u)\mapsto\,T_{ij}(u-z) \quad\textrm{for}\quad i,j=1,\dots, n
\end{equation}
define an automorphism $\tau_z$ of the algebra $\Y(\gl_n)$. Here each of the formal series $T_{ij}(u-z)$ in $(u-z)^{-1}$ should be re-expanded in $u^{-1}$, then the assignment \eqref{tauz} is a correspondence between the respective coefficients of series in $u^{-1}$.

Now let $E_{ij}\in\gl_n$ with $i,j=1,\dots, n$ be the standard matrix units. Sometimes $E_{ij}$ will also denote elements of the algebra $\End(\Cbb^n)$ but this should not cause any confusion. The Yangian $\Y(\gl_n)$ contains the universal enveloping algebra $\U(\gl_n)$ as a subalgebra, the embedding $\U(\gl_n)\to\Y(\gl_n)$ can be defined by the assignments
$$
E_{ij}\mapsto T_{ij}^{(1)} \quad\text{for}\quad i,j=1,\dots,n.
$$
Moreover, there is a homomorphism $\pi_n \colon \Y(\gl_n)\to\U(\gl_n)$ which is identical on the subalgebra $\U(\gl_n)\subset\Y(\gl_n)$ and is given by
\begin{equation}
\label{pin}
\pi_n\colon T_{ij}^{(2)},T_{ij}^{(3)},\dots\,\mapsto\,0     \quad\text{for}\quad    i,j=1,\dots,n.
\end{equation}

Let $T(u)$ be an $n\times n$ matrix whose $i, j$ entry is the series $T_{ij}(u)$. The relations \eqref{yangrel} can be rewritten by means of the \emph{Yang $R$-matrix}
\begin{equation}
\label{ru}
R(u)\,=\,1\otimes1-\sum_{i,j=1}^n\frac{E_{ij}\otimes E_{ji}}{u}
\end{equation}
where the tensor factors $E_{ij}$ and $E_{ji}$ are regarded as $n\times n$ matrices. Note that
\begin{equation}
\label{rur}
R(u)\,R(-u)=1-\frac{1}{u^2}.
\end{equation}
Consider two $n^2\times n^2$ matrices whose entries are series with coefficients in the algebra $\Y(\gl_n)$,
$$
T_1(u)=T(u)\otimes1
\ \quad\text{and}\ \quad
T_2(v)=1\otimes T(v)\,.
$$
Then a collection of relations \eqref{yangrel} for all possible indices $i, j, k, l$ is equivalent to
\begin{equation}
\label{rtt}
R(u-v)\,T_1(u)\,T_2(v)\,=\,T_2(v)\,T_1(u)\,R(u-v)\,.
\end{equation}

Further, the Yangian $\Y(\gl_n)$ is a Hopf algebra over the field $\Cbb$. We define the comultiplication $\De:\Y(\gl_n)\to\Y(\gl_n)\otimes\Y(\gl_n)$ by the assignment
\beq \label{1.33}
\De:\,T_{ij}(u)\mapsto\sum_{k=1}^n\ T_{ik}(u)\otimes T_{kj}(u)\,.
\eeq
Throughout the article this comultiplication will be used for tensor products of $\Y(\gl_n)$-modules. The counit homomorphism
$\ep:\Y(\gl_n)\to\Cbb$ is defined by
$$
\ep\colon T_{ij}(u)\mapsto\de_{ij}\cdot1.
$$
The antipode ${\rm S}$ on $\Y(\gl_n)$ is given by
$$
{\rm S}\colon T(u)\mapsto T(u)^{-1}
$$
and defines an anti-automorphism of the associative algebra $\Y(\gl_n)$.

Let $\Tp(u)$ be the transpose to the matrix $T(u)$. Then the $i, j$ entry of the matrix $\Tp(u)$ is $T_{ji}(u)$. Consider $n^2\times n^2$ matrices
$$
\Tp_1(u)=\Tp(u)\otimes1 \ \quad\text{and}\ \quad \Tp_2(v)=1\otimes\Tp(v)\,.
$$
Note that the Yang $R$-matrix \eqref{ru} is invariant under applying the transposition to both tensor factors. Hence the relation \eqref{rtt} implies
$$
\Tp_1(u)\,\Tp_2(v)\,R(u-v)
\,=\,
R(u-v)\,\Tp_2(v)\,\Tp_1(u)\,,
$$
\begin{equation}
\label{rttp}
R(u-v)\,\Tp_1(-u)\,\Tp_2(-v)
\,=\,
\Tp_2(-v)\,\Tp_1(-u)\,R(u-v)\,.
\vspace{4pt}
\end{equation}
To obtain the latter relation we used \eqref{rur}. By comparing the relations \eqref{rtt} and \eqref{rttp}, an involutive automorphism of the algebra $\Y(\gl_n)$ can be defined by the assignment
$$
\om\colon T(u)\mapsto\Tp(-u),
$$
understood as a correspondence between the respective matrix entries. For further details on the algebra $\Y(\gl_n)$ see \cite[Chapter 1]{MNO}.

\subsection*{\it\normalsize 1.2.\ Representations of Yangian $\Y(\gl_n)$}

Let $\Phi$ be an irreducible finite-dimensional $\Y(\gl_n)$-module. A non-zero vector $\phi \in \Phi$ is said to be of \emph{highest weight} if it is annihilated by all the coefficients of the series $T_{ij}(u)$ with $1 \le i<j \le n$ and is an eigenvector for all the coefficients of the series $T_{ii}(u)$ for $1 \le i \le n$. In that case $\phi$ is unique up to a scalar multiplier and for $i=1, \dots, n-1$ holds
$$
T_{ii}(u)T_{i+1,i+1}(u)^{-1}\phi = P_i(u+\frac12)P_i(u-\frac12)^{-1}\phi
$$
where $P_i(u)$ is a monic polynomial in $u$ with coefficients in $\Cbb$. Then $P_1(u), \dots, P_{n-1}(u)$ are called the \emph{Drinfeld polynomials\/} of $\Phi$. Any sequence of $n-1$ monic polynomials with complex coefficients arises this way. An irreducible finite-dimensional $\Y(\gl_n)$-module is defined by the set of eigenfunctions $\La_{ii}(u)$ such that
$$
T_{ii}(u)\ph = \La_{ii}(u)\ph
$$
for $i=1 \lcd n$. Thus, an irreducible finite-dimensional $\Y(\gl_n)$-module is defined by a set of polynomials $P_1(u), \dots, P_{n-1}(u)$ and some normalizing factor, for example $\La_{nn}(u)$.

Relations \eqref{yangrel} show that for any formal power series $g(u)$ in $u^{-1}$ with coefficients in $\Cbb$ and leading term $1$, the assignments
\beq
\label{fut}
T_{ij}(u)\mapsto g(u)T_{ij}(u)
\eeq
define an automorphism of the algebra $\Y(\gl_n)$. The subalgebra in $\Y(\gl_n)$ consisting of all elements which are invariant under every automorphism of the form \eqref{fut}, is called the \emph{special Yangian\/} of $\gl_n$. The special Yangian of $\gl_n$ is a Hopf subalgebra of $\Y(\gl_n)$ and is isomorphic to the Yangian $\Y(\sl_n)$ of the special linear Lie algebra $\sl_n\subset\gl_n$ considered in \cite{D2,D3}. For the proofs of the latter two assertions see \cite[Subsection 1.8]{M}.

Two irreducible finite-dimensional $\Y(\gl_n)$-modules are called \emph{similar} if their restrictions to the special Yangian are isomorphic. Therefore, irreducible finite-dimensional $\Y(\sl_n)$-modules are defined by a set of Drinfeld polynomials, while irreducible finite-dimensional $\Y(\gl_n)$-modules are parameterized by their Drinfeld polynomials only up to similarity. For further details on representations of $\Y(\gl_n)$ see \cite{D3,CP}.

Now, let us define the fundamental representation $V_z$ and the dual fundamental representation $V'_z$ of $\Y(\gl_n)$. As a vector space $V_z = V'_z = \Cbb^n$, the corresponding actions are given by the pull-backs of the defining $\gl_n$-modules through the evaluation and the dual evaluation homomorphisms respectively
\begin{align*}
\pi_z  = \pi_n\comp\tau_{-z}\colon     &\Y(\gl_n)\longra\U(\gl_n), &\hspace{-40pt}T_{ij}(u) &\mapsto \de_{ij} + \frac{E_{ij}}{u+z}, \\
\pi'_z = \pi_n\comp\tau_z\comp\om\colon &\Y(\gl_n)\longra\U(\gl_n), &\hspace{-40pt}T_{ij}(u) &\mapsto \de_{ij} - \frac{E_{ji}}{u+z}.
\end{align*}

Let also $\Om_z$ and $\Om'_z$ denote one-dimensional representations of $\Y(\gl_n)$ defined as the pull-backs of the standard action of $\gl_n$ on $\La^n(\Cbb^n)$ through the evaluation and the dual evaluation homomorphisms. Thus,
$$
T_{ij}(u) \mapsto \de_{ij}\cdot\frac{u+z+1}{u+z}
\qquad\text{and}\qquad
T_{ij}(u) \mapsto \de_{ij}\cdot\frac{u+z-1}{u+z}
$$
on $\Om_z$ and $\Om'_z$ correspondingly.

\begin{df}
\begin{itemize}
\item[a)] Representation of Yangian $\Y(\gl_n)$ is called polynomial if it is isomorphic to a subquotient of tensor product of fundamental representations $V_z$ with arbitrary values of $z$.
\item[b)] Representation of Yangian $\Y(\gl_n)$ is called rational if it is isomorphic to a subquotient of tensor product of fundamental and dual fundamental representations $V_z$ and $V'_z$ with arbitrary values of $z$.
\end{itemize}
\end{df}

Note that representations $V_z$ and $\Om_z$ are polynomial while representations $V'_z$ and $\Om'_z$ are rational. We would also like to point out that the modules $\Om_z$ and $\Om'_z$ are cocentral, i.e. in tensor products of $\Y(\gl_n)$-modules one can permute them with other modules. More precisely, the form of the comultiplication map $\De$ implies that for any $\Y(\gl_n)$-module $M$ there is a pair of canonical isomorphism
$$
M \otimes \Om_z \cong \Om_z \otimes M \qquad\text{and}\qquad M \otimes \Om'_z \cong \Om'_z \otimes M
$$
sending $m \otimes a \mapsto a \otimes m$ where $m \in M$ and $a \in \Om_z$ or $a \in \Om'_z$.

\subsection*{\it\normalsize 1.3.\ Functor}

Let $E$ be an $m \times m$ matrix whose $a,b$ entry is the generator $E_{ab}\in\gl_m$, and let $\Ep$ be its transpose. Consider a matrix
\beq \label{Xu}
X(u) = \hr{u+\th\Ep}^{-1} \qquad\text{with}\qquad \th=\pm1
\eeq
whose $a,b$ entry is a formal power series in $u^{-1}$
\beq \label{Xabu}
X_{ab}(u) = u^{-1}\hr{\de_{ab} + \suml{s=0}{\infty} X_{ab}^{(s)} u^{-s-1}}.
\eeq
For $a,b=1,\dots,m$ elements $X_{ab}^{(s)}\in\U(\gl_m),$ moreover
$$
X_{ab}^{(0)} = \; -\th E_{ba} \qquad\text{and}\qquad
X_{ab}^{(s)} = \suml{c_1,\dots,c_s=1}{m} (-\th)^{s+1} E_{c_1 a} E_{c_2 c_1} \dots E_{c_s c_{s-1}} E_{b c_s} \quad\text{for}\quad s\ge1.
$$

Consider the ring $\P$ of polynomial functions on $\Cbb^m\otimes\Cbb^n$ with coordinate functions $x_{ai}\,$ where $\,a=1,\dots,m\,$ and $\,i=1,\dots,n$. Let $\PD$ be the ring of differential operators with polynomial coefficients on $\P$, and let $\d_{ai}$ be the partial derivation corresponding to $x_{ai}$.

Consider also the Grassmann algebra $\G$ of the vector space $\Cbb^m\otimes\Cbb^n$. It is generated by the elements $x_{ai}$ subject to the anticommutation relations $x_{ai}x_{bj} = -x_{bj}x_{ai}$ for all indices $a,b=1,\dots,m$ and $i,j=1,\dots,n$. Let $\d_{ai}$ be the operator of left derivation on $\G$ corresponding to the variable $x_{ai}$. Let $\GD$ denote the ring of $\Cbb$-endomorphisms of $\G$ generated by all operators of left multiplication $x_{ai}$ and by all operators $\d_{ai}$.

Let us define
\begin{align*}
\H&=\P & \quad&\text{and}\quad & \HD&=\PD & \quad&\text{if}\quad & \th&=1, \\
\H&=\G & \quad&\text{and}\quad & \HD&=\GD & \quad&\text{if}\quad & \th&=-1.
\end{align*}
Therefore, algebra $\H$ is generated by the elements $x_{ai}$, $\;a=1,\dots,m, \; i=1,\dots,n$ subject to relations
$$
x_{ai}x_{bj} -\th x_{bj}x_{ai} = 0.
$$
Algebra $\HD$ is generated by the elements $x_{ai}$ and $\d_{bj}$, $\;a,b=1,\dots,m, \; i,j=1,\dots,n$ subject to relations
\beq \label{rel_dx}
\begin{split}
x_{ai}x_{bj}   \,-\, \th x_{bj}x_{ai}   \;&=\; 0, \\
\d_{ai}\d_{bj} \,-\, \th \d_{bj}\d_{ai} \;&=\; 0, \\
\d_{ai}x_{bj}  \,-\, \th x_{bj}\d_{ai}  \;&=\; \de_{ab}\de_{ij}.
\end{split}
\eeq
Note that $\th=1$ corresponds to the case of commuting variables, while $\th=-1$ corresponds to the case of anticommuting variables.

From now on and till the end of the paper we assume $m=p+q$, where $p$ and $q$ are non-negative integers. Let us introduce new coordinates
\beq \label{new_coord}
p_{ci} =
\left\{
\begin{array}{rl}
-\th x_{ci}, &\text{for}\;\, c=1,\dots,p\\
\d_{ci}, &\text{for}\;\, c=p+1,\dots,m
\end{array}
\right.
\qquad\quad
q_{ci} =
\left\{
\begin{array}{rl}
\d_{ci}, &\text{for}\;\, c=1,\dots,p\\
x_{ci},  &\text{for}\;\, c=p+1,\dots,m.
\end{array}
\right.
\eeq
Now relations \eqref{rel_dx} can be rewritten in the following form
\begin{align*}
q_{ai}q_{bj} \,-\, \th q_{bj}q_{ai} \;&=\; 0, \\
p_{ai}p_{bj} \,-\, \th p_{bj}p_{ai} \;&=\; 0, \\
p_{ai}q_{bj} \,-\, \th q_{bj}p_{ai} \;&=\; \de_{ab}\de_{ij}.
\end{align*}
Define the elements $\Eh_{ai,bj}\in\HD$ as
\beq \label{Eaibj}
\Eh_{ai,bj} = q_{ai}p_{bj}.
\eeq
Elements $\Eh_{ai,bj}$ satisfy relations
\begin{align}
\label{E_rel1}
\hs{\Eh_{ai,bj}, \Eh_{ck,dl}} &= \de_{bc}\de_{jk}\Eh_{ai,dl} - \de_{ad}\de_{il}\Eh_{ck,bj}, \\
\label{E_rel2}
\Eh_{ai,bj}\Eh_{ck,dl} - \th \Eh_{ck,bj}\Eh_{ai,dl} &= \de_{bc}\de_{jk}\Eh_{ai,dl} - \th\de_{ab}\de_{ij}\Eh_{ck,dl}
\end{align}
which also imply
\beq
\label{E_rel3}
\Eh_{ck,dl}\Eh_{ai,bj} - \th \Eh_{ck,bj}\Eh_{ai,dl} = \de_{ad}\de_{il}\Eh_{ck,bj} - \th\de_{ab}\de_{ij}\Eh_{ck,dl}.
\eeq

There is an action of the algebra $\gl_m$ on the space $\H$ which is defined by homomorphism $\zeta_n \colon \U(\gl_m) \mapsto \HD$, where
\beq\label{zeta}
\zeta_n\hr{E_{ab}} = \th\de_{ab}\dfrac n2 + \suml{k=1}{n} \Eh_{ak,bk}.
\eeq
The homomorphism property can be verified using the relation \eqref{E_rel1}. Hence, there exists an embedding $\U(\gl_m)\hookrightarrow\U(\gl_m)\otimes\HD$ defined for $a,b = 1,\dots,m$ by the mappings
\beq
\label{gl_emb}
E_{ab} \mapsto E_{ab}\otimes1 + 1\otimes \zeta_n(E_{ab}).
\eeq

\begin{prop} \label{prop_bimod}
\hskip5pt$\rm i)$ One can define a homomorphism $\al_m \colon \Y(\gl_n) \to \U(\gl_m)\otimes\HD$ by mapping
\beq
\label{yang_hom}
\al_m \colon\hskip5pt T_{ij}(u) \,\mapsto\, \de_{ij} + \suml{a,b=1}{m} X_{ab}(u) \otimes \Eh_{ai,bj}.
\eeq
$\rm ii)$ The image of $\Y(\gl_n)$ under the homomorphism \eqref{yang_hom} commutes with the image of $\U(\gl_m)$ under the embedding \eqref{gl_emb}.
\end{prop}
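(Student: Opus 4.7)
The plan is to verify part (i) by substituting the formula \eqref{yang_hom} into the defining relation \eqref{yangrel}, and to deduce part (ii) from a short commutator computation.

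For part (i), write $\tilde T_{ij}(u) = \al_m(T_{ij}(u))$ and expand both sides of \eqref{yangrel}. Using the identity $[A\otimes B, C\otimes D] = AC\otimes [B,D] + [A,C]\otimes DB$, the left-hand side becomes
$$
(u-v)\sum_{a,b,c,d=1}^{m} \bigl(X_{ab}(u)X_{cd}(v)\otimes \Eh_{ai,bj}\Eh_{ck,dl} \,-\, X_{cd}(v)X_{ab}(u)\otimes \Eh_{ck,dl}\Eh_{ai,bj}\bigr),
$$
while the right-hand side yields a similar quadratic sum $\sum_{a,b,c,d}\bigl(X_{ab}(u)X_{cd}(v)-X_{ab}(v)X_{cd}(u)\bigr)\otimes \Eh_{ak,bj}\Eh_{ci,dl}$ together with two linear remainders weighted by $\de_{kj}$ and $\de_{il}$. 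The central auxiliary identity, obtained from $(u+\th\Ep)X(u)=I_m$ together with $(u+\th\Ep)-(v+\th\Ep)=(u-v)I_m$, is
$$
(u-v)\ts X(u)X(v) \,=\, X(v) - X(u),
$$
an equality of $m\times m$ matrices with entries in $\U(\gl_m)$; in particular $X(u)X(v)=X(v)X(u)$ as matrix products. This identity converts the two linear $\de_{kj}$ and $\de_{il}$ remainders on the right-hand side into $(u-v)$ times sums of quadratic $X$-monomials. The remaining match between the two sides is then achieved by applying the reordering relations \eqref{E_rel2} and \eqref{E_rel3}, which express both $\Eh_{ai,bj}\Eh_{ck,dl}$ and $\Eh_{ck,dl}\Eh_{ai,bj}$ in terms of $\th\Eh_{ck,bj}\Eh_{ai,dl}$ plus linear $\de$-corrections; these corrections are precisely what is needed to cancel the remainders produced in the previous step.

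For part (ii), it suffices to check that each generator $E_{ab}\otimes 1 + 1\otimes \zeta_n(E_{ab})$ of the image of $\U(\gl_m)$ commutes with $\tilde T_{ij}(u)$. The commutator splits as
$$
\sum_{c,d=1}^{m} \bigl([E_{ab},\,X_{cd}(u)]\otimes \Eh_{ci,dj} \,+\, X_{cd}(u)\otimes[\zeta_n(E_{ab}),\,\Eh_{ci,dj}]\bigr).
$$
Differentiating the relation $X(u)(u+\th\Ep)=I_m$ in the $\gl_m$-direction gives $[E_{ab},X(u)] = -\th\ts X(u)[E_{ab},\Ep]\,X(u)$, a sum quadratic in the entries of $X(u)$. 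On the other side, relation \eqref{E_rel1} together with the definition \eqref{zeta} yields $[\zeta_n(E_{ab}),\,\Eh_{ci,dj}] = \de_{bc}\Eh_{ai,dj} - \de_{ad}\Eh_{ci,bj}$. Expanding $[E_{ab},\Ep]$ and relabeling the dummy indices $c,d$ shows that the two contributions cancel term by term.

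I expect the main obstacle in part (i) to be the bookkeeping required to match all pieces on the two sides of \eqref{yangrel}, orchestrated through the single key matrix identity $(u-v)X(u)X(v)=X(v)-X(u)$ and the $\Eh$-reordering relations \eqref{E_rel2}, \eqref{E_rel3}; part (ii) is a short direct check. The overall calculation follows the pattern of the Olshanski centralizer construction and is a variant of the corresponding verifications in the polynomial case appearing in \cite{MNO} and \cite{KN1,KN2}.
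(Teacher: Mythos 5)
Your plan for part (ii) is sound: the resolvent identity $[E_{ab},X(u)] = -\th X(u)[E_{ab},\Ep]X(u)$, once simplified using $X(u)(u+\th\Ep)=I_m$ to the linear form $[E_{ab},X_{ef}(u)] = \de_{fb}X_{ea}(u) - \de_{ea}X_{bf}(u)$, matches term-by-term with $\sum_k[\zeta_n(E_{ab}),\Eh_{ei,fj}]$; this is essentially the computation the paper does, just packaged through the matrix resolvent.

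Part (i) has a genuine gap. After applying \eqref{E_rel2} and \eqref{E_rel3} to the left-hand side of \eqref{yangrel}, the leading quadratic piece takes the form
$$
(u-v)\sum_{a,b,c,d}\,[X_{ab}(u),X_{cd}(v)]\otimes\th\,\Eh_{ck,bj}\Eh_{ai,dl},
$$
and matching it with the main quadratic piece of the right-hand side requires the \emph{entry-wise} relation
$$
(u-v)\cdot[X_{ab}(u),X_{cd}(v)]=\th\bigl(X_{cb}(u)X_{ad}(v)-X_{cb}(v)X_{ad}(u)\bigr),
$$
which is exactly \eqref{X_yang} (Proposition~A.1(ii) of the paper). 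Your proposal names only the matrix identity $(u-v)X(u)X(v)=X(v)-X(u)$ as the ``central auxiliary identity.'' That identity, and its corollary $X(u)X(v)=X(v)X(u)$, control only the \emph{matrix products} $\sum_b X_{ab}(u)X_{bc}(v)$; it says nothing about the commutator of individual entries $X_{ab}(u),X_{cd}(v)\in\U(\gl_m)$, which is what appears here. Those entries do not commute, so without \eqref{X_yang} the quadratic parts of the two sides of \eqref{yangrel} cannot be brought to a common form, and the $\de_{ab}\de_{ij}$-piece on the left (which also must vanish via \eqref{X_yang} followed by the matrix commutativity) is likewise left unaccounted for. The fix is to add, and prove, the entry-wise relation \eqref{X_yang}; its proof is a short but separate computation (the paper derives it by multiplying $[(u+\th\Ep)_{ef},(v+\th\Ep)_{gh}]=\de_{eh}E_{fg}-\de_{fg}E_{he}$ on both sides by suitable $X$'s and resumming), and it cannot be deduced from the matrix resolvent identity alone.
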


Consider an automorphism of the algebra $\HD$ such that for all $a=1,\dots,m$ and $i=1,\dots,n$
\beq \label{autom}
x_{ai} \mapsto q_{ai} \qquad\text{and}\qquad \d_{ai} \mapsto p_{ai}.
\eeq
A proof of Proposition~\ref{prop_bimod} can be obtained by applying the automorphism~\eqref{autom} to the results of \cite[Proposition 1.3]{KN1} if $\th=1$ and to the results of \cite[Proposition 1.3]{KN2} if $\th=-1$. In the appendix we give an explicit proof of the Proposition~\ref{prop_bimod}.

Finally, let $V$ be an arbitrary $\gl_m$-module. Let us define a $\U(\gl_m)\otimes\HD$-module
\beq \label{Epq}
\Ec_{p,q}(V) = V \otimes \H.
\eeq
The results of proposition \ref{prop_bimod} turns $\Ec_{p,q}$ into a functor from the category of $\gl_m$-modules to the category of $\gl_m$ and $\Y(\gl_n)$ bimodules, where the actions of the algebras $\gl_m$ and $\Y(\gl_n)$ are defined by homomorphisms \eqref{gl_emb} and \eqref{yang_hom} respectively.


\section*{\bf\normalsize 2.\ Reduction to standard modules}
\setcounter{section}{2}
\setcounter{theorem}{0}
\setcounter{equation}{0}

\subsection*{\it\normalsize 2.1.\ Parabolic induction}

Let $l,\,l_1,l_2$ be three positive integers such that $l=l_1+l_2$. Consider a $\gl_l$-module $U$, then $\Ec_{l_1,l_2}(U)$ is a $\Y(\gl_n)$-module. For any $z\in\Cbb$ denote by $\Ec_{l_1,l_2}^z(U)$ the $\Y(\gl_n)$-module obtained from  $\Ec_{l_1,l_2}(U)$ via the pull-back through the automorphism $\tau_{-\th z}$ of $\Y(\gl_n)$. In other words, the underlying vector space of $\Ec_{l_1,l_2}^z(U)$ is the same as of $\Ec_{l_1,l_2}(U)$, but the action of $T_{ij}(u)$ on $\Ec_{l_1,l_2}^z(U)$ is given by the same formula as the action of $T_{ij}(u+\th z)$ on $\Ec_{l_1,l_2}(U)$. Note that as a $\gl_l$-module $\Ec_{l_1,l_2}^z(U)$ coincides with $\Ec_{l_1,l_2}(U)$.

Decomposition $\Cbb^{m+l}=\Cbb^m\oplus\Cbb^l$ determines an embedding of the direct sum $\gl_m\oplus\gl_l$ into $\gl_{m+l}$. As a subalgebra of $\gl_{m+l}$, the direct summand $\gl_m$ is spanned by the matrix units $E_{ab}\in\gl_{m+l}$ where $a,b=1,\dots,m$, the direct summand $\gl_l$ is spanned by the matrix units $E_{ab}$ where $a,b=m+1,\dots,m+l$. Let $\q$ and $\qp$ be the Abelian subalgebras of $\gl_{m+l}$ spanned respectively by matrix units $E_{ba}$ and $E_{ab}$ for all $a=1, \dots, m$ and $b=m+1, \dots, m+l$. Define a maximal parabolic subalgebra $\p=\gl_m\oplus\gl_l\oplus\qp$ of the reductive Lie algebra $\gl_{m+l}$, then $\gl_{m+l}=\q\oplus\p$.

Consider a $\gl_m$-module $V$ and a $\gl_l$-module $U$. Let us turn the $\gl_m \oplus \gl_l$-module $V \otimes U$ into a $\p$-module by letting the subalgebra $\qp\subset\p$ act by zero. Define $V \bt U$ to be the $\gl_{m+l}$-module induced from the $\p$-module $V\otimes U$. The $\gl_{m+l}$-module $V\bt U$ is called \textit{parabolically induced} from the $\gl_m\oplus\gl_l$-module $V\otimes U$.

Now consider bimodules $\Ec_{p,q+r}(V\bt U)$ and $\Ec_{p+r,q}(V\bt U)$ over $\gl_{m+r}$ and $\Y(\gl_n)$, which are parabolically induced from the $\gl_p\oplus\gl_{q+r}$ and $\gl_{p+r}\oplus\gl_q$-module $V\otimes U$ respectively. The action of $\Y(\gl_n)$ commutes with the action of the Lie algebra $\gl_{m+r}$, and hence with the action of the subalgebra $\q\subset\gl_{m+r}$. For any $\gl_m$-module $W$ denote by $W_{\q}$ the vector space $W/\,\q\cdot W$ of the coinvariants of the action of the subalgebra $\q\subset\gl_m$ on $W$. Then the vector spaces $\Ec_{p,q+r}(V\bt U)_{\q}$ and $\Ec_{p+r,q}(V\bt U)_{\q}$ are quotients of the $\Y(\gl_n)$-modules $\Ec_{p,q+r}(V\bt U)$ and $\Ec_{p+r,q}(V\bt U)$ respectively. Note that the subalgebras $\gl_p\oplus\gl_{q+r}$ and $\gl_{p+r}\oplus\gl_q$ also act on these quotient spaces.

\begin{theorem}
\label{parind}
\hskip5pt$\rm i)$
The bimodule $\Ec_{p,q+r}(V\bt U)_\q$ over the Yangian $\Y(\gl_n)$ and the direct sum $\gl_p\oplus\gl_{q+r}$ is isomorphic to the tensor product $\Ec_{p,q}(V)\otimes\Ec_{0,r}^{m}(U)$.
\vspace{5pt} \\
$\rm ii)$
The bimodule $\Ec_{p+r,q}(V\bt U)_\q$ over the Yangian $\Y(\gl_n)$ and the direct sum $\gl_{p+r}\oplus\gl_{q}$ is isomorphic to the tensor product $\Ec_{r,0}(V)\otimes\Ec_{p,q}^{r}(U)$.
\end{theorem}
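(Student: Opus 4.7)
The plan is to prove part (i) by constructing an explicit bimodule isomorphism; part (ii) follows by the analogous argument after interchanging the roles of the $p$- and $q$-type variables. The argument proceeds in three steps: establish the underlying vector-space identification, check compatibility with the Lie algebra action, and check compatibility with the Yangian action.

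For the vector-space level, PBW applied to the parabolic decomposition $\gl_{m+l}=\q\oplus\p$ gives $V\bt U\cong S(\q)\otimes V\otimes U$, while the polynomial (respectively Grassmann) algebra underlying $\Ec_{p,q+r}(V\bt U)$ factorizes, according to whether the row index $a\in\{1,\dots,m+r\}$ lies in the first $m$ or the last $r$ rows, as the tensor product of the $\H$-factor of $\Ec_{p,q}$ and the $\H$-factor of $\Ec_{0,r}$; note that every $a>m$ lies beyond the cutoff $p$, which is precisely what forces the second factor to have $p=0$. The diagonal embedding \eqref{gl_emb} then yields, for each $E_{ba}\in\q$, the congruence $E_{ba}\otimes 1\equiv -1\otimes\zeta_n(E_{ba})$ modulo $\q\cdot W$. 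Iterating along a PBW monomial in $\U(\q)=S(\q)$ reduces every element of $\Ec_{p,q+r}(V\bt U)$ modulo coinvariants to one supported on $1\in S(\q)$, and a basis count shows the resulting linear map from $\Ec_{p,q}(V)\otimes\Ec_{0,r}^{m}(U)$ to the coinvariants is a bijection.

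Compatibility with the Lie algebra action reduces to a direct inspection on the block-diagonal subalgebra $\gl_p\oplus\gl_q\oplus\gl_r\subset\gl_{m+r}$ that normalizes $\q$: its generators act only on the corresponding tensor factors, and the embeddings \eqref{gl_emb} for the $(p,q)$- and $(0,r)$-splittings assemble into the $(p,q+r)$-splitting on the coinvariants.

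The main work, and the principal obstacle, is the Yangian part. We must compare $\al_{m+r}(T_{ij}(u))$ on the coinvariants with
\[
\bigl((\al_m\otimes(\al_r\comp\tau_{-\th m}))\comp\De\bigr)(T_{ij}(u))\;=\;\sum_{k=1}^{n}\al_m(T_{ik}(u))\otimes\al_r(T_{kj}(u+\th m))
\]
acting on $\Ec_{p,q}(V)\otimes\Ec_{0,r}^{m}(U)$. Decompose the matrix $u+\th(E^{(m+r)})'$ into $2\times 2$ blocks of sizes $m$ and $r$:
\[
u+\th(E^{(m+r)})'\;=\;\begin{pmatrix} u+\th A' & \th C' \\ \th B' & u+\th D' \end{pmatrix},
\]
with $A'$ in $\gl_m$, $D'$ in $\gl_r$, $C'$ in $\q$, and $B'$ in $\qp$. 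On coinvariants the $C'$-entries are replaced by $-\zeta_n(\,\cdot\,)$ via \eqref{gl_emb}, while the $B'$-entries are handled using that $\qp$ annihilates the vacuum $V\otimes U\subset V\bt U$, which controls their contribution on a vacuum representative of each coinvariant class. Applying the Schur complement formula for the block inverse, the four blocks of $X^{(m+r)}(u)$ express as formal power series in $(u+\th A')^{-1}$ and $(u+\th D')^{-1}$ with $\zeta_n$-corrections: the $(a,b)$-entries of $\al_{m+r}(T_{ij}(u))$ with $a,b\le m$ reassemble into $\al_m(T_{ik}(u))$ on the first tensor factor; those with $a,b>m$ yield $\al_r(T_{kj}(u+\th m))$ on the second, the spectral shift $\th m$ emerging from the accumulated constants $\th n/2$ in $\zeta_n(E_{aa})$ summed over the $m$ rows absent from the $\gl_r$-picture; and the cross terms collect into the comultiplication sum over $k$. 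The hardest step is tracking the signs of $\th$ throughout the Schur-complement expansion and confirming that no residual $\qp$-contribution survives beyond what has already been absorbed by the $\zeta_n$-replacement of the $C'$-entries.
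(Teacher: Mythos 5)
Your proof takes a genuinely different route from the one the paper gives. The paper does not rework the argument: it observes that Theorem~\ref{parind} is obtained from \cite[Theorem~2.1]{KN1} (for $\th=1$) or \cite[Theorem~2.1]{KN2} (for $\th=-1$) by conjugating the whole picture by the automorphism $\varpi$ of \eqref{autom}, and then points out the two things that need checking: that the bijection $\chi$ constructed there commutes with $\varpi$, so that the intertwining property transfers; and that the filtration on $\H$ used to prove bijectivity must be replaced by the descending filtrations the paper lists, since $\varpi$ swaps the roles of $x$'s and $\d$'s. Your proposal instead re-derives the map from scratch: PBW reduction of $V\bt U$ to $S(\q)\otimes V\otimes U$, the row-wise factorization of $\H$, elimination of $S(\q)$ via the coinvariant relation $E_{ba}\otimes1\equiv-1\otimes\zeta_n(E_{ba})$, and a block/Schur-complement expansion of $X^{(m+r)}(u)$ to match the comultiplication. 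This is in spirit the argument of KN1/KN2 themselves, so it is a self-contained alternative that buys independence from the cited works, at the cost of redoing the hardest computation that the paper deliberately avoids.

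There is, however, a concrete gap in your account of the Yangian step. You attribute the spectral shift $\tau_{-\th m}$ on the second factor to ``the accumulated constants $\th n/2$ in $\zeta_n(E_{aa})$ summed over the $m$ rows absent from the $\gl_r$-picture''; but $m$ copies of $\th n/2$ sum to $\th mn/2$, not $\th m$, so this bookkeeping cannot be the source of the shift. The $m$ in $\tau_{-\th m}$ must instead come from the size of the top-left block $u+\th A'$ itself: when the Schur complement $\bigl(u+\th D'-\th^2 B'(u+\th A')^{-1}C'\bigr)^{-1}$ is reduced on coinvariants and the $\q$-entries (your $C'$) are traded for $-\zeta_n(\cdot)$, the contraction $B'(u+\th A')^{-1}C'$ produces a trace over the $m$ rows that have been eliminated, and this trace supplies the constant shift of the argument by $\th m$. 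As written, your sketch does not carry out this contraction, so the claimed appearance of $\al_r(T_{kj}(u+\th m))$ is asserted but not derived. A secondary, smaller issue: your Lie-algebra compatibility paragraph only inspects $\gl_p\oplus\gl_q\oplus\gl_r$, but the theorem asserts the isomorphism over $\gl_p\oplus\gl_{q+r}$, so you still need to say why the off-diagonal generators of $\gl_{q+r}$ mixing rows $p+1,\dots,m$ with rows $m+1,\dots,m+r$ are matched, which is exactly where the $\q$-coinvariant reduction and the $\zeta_n$-replacement interact nontrivially.
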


Theorem \ref{parind} is equivalent to \cite[Theorem 2.1]{KN1} under the action of automorphism~\eqref{autom} if $\th=1$ and to \cite[Theorem 2.1]{KN2} under the action of automorphism~\eqref{autom} if $\th=-1$. In both cases Theorem~2.1 was proved by establishing a linear map
$$
\chi \colon \Ec_m(V) \otimes \Ec_l^m(U) \to \Ec_{m+l}(V\bt U)_\q.
$$
Both the source and the target are bimodules over the algebras $\gl_m \oplus \gl_l$ and $\Y(\gl_n)$, while $\chi$ is a bijective map intertwining actions of algebras. One can easily show that the map $\chi$ commutes with the automorphism~\eqref{autom}, hence the intertwining property follows in our case. A proof that the map $\chi$ is bijective can be almost word by word taken from \cite{KN1} or \cite{KN2} though one has to keep in mind, that the automorphism~\eqref{autom} alters the filtration of the algebra $\H$ described in the papers just mentioned. Thus, one should consider descending filtrations
\begin{align*}
\mathop{\oplus}\limits_{N=K}^{\infty} \Pc(\Cbb^m)&\otimes\Pc^N(\Cbb^r)&
&\text{and}&
\mathop{\oplus}\limits_{N=K}^{\infty} \Pc^N(\Cbb^m)&\otimes\Pc(\Cbb^r)&
&\text{if} \quad \th=1, \\
\mathop{\oplus}\limits_{N=K}^{r} \Gc(\Cbb^m)&\otimes\Gc^N(\Cbb^r)&
&\text{and}&
\mathop{\oplus}\limits_{N=K}^{m} \Gc^N(\Cbb^m)&\otimes\Gc(\Cbb^r)&
&\text{if} \quad \th=-1
\end{align*}
for cases i) and ii) of the Theorem~\ref{parind} respectively. Therefore, the proof of the theorem follows in our case.

Let us consider the triangular decomposition
\beq
\label{tridec}
\gl_m=\n\oplus\h\oplus\np.
\eeq
of the Lie algebra $\gl_m$. Here $\h$ is the Cartan subalgebra of $\gl_m$ with the basis vectors $E_{11},\dots,E_{mm}$. Further, $\n$ and $\np$ are the nilpotent subalgebras spanned respectively by the elements $E_{ba}$ and $E_{ab}$ for all $a,b=1,\dots,m$ such that $a<b$. Denote by $V_{\n}$ the vector space $V/\,\n\cdot V$ of the coinvariants of the action of the subalgebra $\n\subset\gl_m$ on $V$. Note that the Cartan subalgebra $\h\subset\gl_m$ acts on the vector space $V_{\n}$. Now consider the bimodule $\Ec_{p,q}(V)$. The action of $\Y(\gl_n)$ on this bimodule commutes with the action of the Lie algebra $\gl_m$, and hence with the action of the subalgebra $\n\subset\gl_m$. Therefore, the space $\Ec_{p,q}(V)_\n$ of coinvariants of the action of $\n$ is a quotient of the $\Y(\gl_n)$-module $\Ec_{p,q}(V)$. Thus, we get a functor from the category of all $\gl_m$-modules to the category of bimodules over $\h$ and $\Y(\gl_n)$
\beq
\label{zelefun}
V\mapsto\Ec_{p,q}(V)_\n = (V\otimes\H)_\n.
\eeq

By the transitivity of induction, Theorem \ref{parind} can be extended from the maximal to all parabolic subalgebras of the Lie algebra $\gl_m$. Consider the case of the Borel subalgebra $\h\oplus\n'$ of $\gl_m$. Apply the functor \eqref{zelefun} to the $\gl_m$-module $V=M_\mu$, where $M_\mu$ is the Verma module of weight $\mu\in\h^\ast$. We obtain the $\hr{\h,\Y(\gl_n)}$-bimodule
$$
\Ec_{p,q}(M_\mu)_\n \,=\, (M_\mu\otimes\H)_\n.
$$
Using the basis $E_{11},\dots,E_{mm}$ we identify $\h$ with the direct sum of $m$ copies of the Lie algebra $\gl_1$. Consider the Verma modules $M_{\mu_1},\dots,M_{\mu_m}$ over $\gl_1$. By applying
Theorem~\ref{parind} repeatedly we get

\begin{cor}
\label{bimequiv}
The bimodule $\Ec_{p,q}(M_\mu)_\n$ over $\h$ and $\Y(\gl_n)$
is isomorphic to the tensor product
$$
\Ec_{1,0}(M_{\mu_1}) \otimes \Ec_{1,0}^1(M_{\mu_2}) \otimes \dots \otimes \Ec_{1,0}^{p-1}(M_{\mu_p}) \otimes \Ec_{0,1}^p(M_{\mu_{p+1}}) \otimes \dots \otimes \Ec_{0,1}^{m-1}(M_{\mu_m}).
$$
\end{cor}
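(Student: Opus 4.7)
The plan is to prove the corollary by induction on $m=p+q$, using Theorem~\ref{parind} to peel off one coordinate of $\mu$ at each step. The base case $m=1$ is trivial: then $\n=0$, the subscript disappears, and the right hand side is the single factor $\Ec_{1,0}(M_{\mu_1})$ or $\Ec_{0,1}(M_{\mu_1})$ depending on whether $(p,q)=(1,0)$ or $(0,1)$.

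For the inductive step I would distinguish two cases. If $p\ge 1$, transitivity of induction shows that $M_\mu\cong M_{\mu_1}\bt M_{(\mu_2,\dots,\mu_m)}$ as a $\gl_m$-module, viewing $M_{\mu_1}$ as a $\gl_1$-module and $M_{(\mu_2,\dots,\mu_m)}$ as a $\gl_{m-1}$-module. Applying Theorem~\ref{parind}(ii) with $r=1$ yields
$$
\Ec_{p,q}(M_\mu)_{\q}\;\cong\;\Ec_{1,0}(M_{\mu_1})\otimes \Ec_{p-1,q}^{\,1}(M_{(\mu_2,\dots,\mu_m)}).
$$
If $p=0$, I would instead decompose $M_\mu\cong M_{(\mu_1,\dots,\mu_{m-1})}\bt M_{\mu_m}$ and apply Theorem~\ref{parind}(i) at $r=1$ to obtain
$$
\Ec_{0,q}(M_\mu)_{\q}\;\cong\;\Ec_{0,q-1}(M_{(\mu_1,\dots,\mu_{m-1})})\otimes \Ec_{0,1}^{\,m-1}(M_{\mu_m}).
$$
In either case the $\q$ produced by the theorem is a Lie ideal of $\n$ (the first column in Case~A, the bottom row in Case~B), and its complement in $\n$ is the lower triangular nilpotent of the residual $\gl_{m-1}$. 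That complement acts on only one of the two tensor factors via the natural embedding, so passing from $\q$-coinvariants to $\n$-coinvariants amounts to taking further coinvariants of that factor alone, to which the inductive hypothesis applies.

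To line up the superscripts correctly I would use the identity $\De\circ\tau_z=(\tau_z\otimes\tau_z)\circ\De$, immediate from~\eqref{1.33} since both tensor slots carry the same formal parameter $u$. This gives $(A\otimes B)^z\cong A^z\otimes B^z$ for any $\Y(\gl_n)$-modules $A,B$. In Case~A the shift $(\cdot)^{1}$ then converts the inductive expression on $\mu_2,\dots,\mu_m$, whose shifts run $0,1,\dots,m-2$, into one with shifts $1,2,\dots,m-1$; tensoring with the peeled-off factor $\Ec_{1,0}(M_{\mu_1})$ recovers the stated target. Case~B is analogous and requires no further shift.

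The only real obstacle is bookkeeping: keeping track at each step of which copy of $\gl_{m-1}$ acts on which tensor factor, and verifying that the shift commutes with taking coinvariants under it (clear, since $\tau_z$ acts through a Hopf algebra disjoint from $\gl_{m-1}$). I expect no deeper algebraic content beyond Theorem~\ref{parind} itself and the compatibility of $\De$ with $\tau_z$.
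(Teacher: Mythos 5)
Your proof is correct and follows essentially the same strategy the paper sketches: the paper simply says the corollary follows by applying Theorem~\ref{parind} repeatedly (via transitivity of parabolic induction, working down to the Borel subalgebra), and your explicit induction with $r=1$ at each step—together with the observations that $\q$ is an ideal of $\n$, that the complementary $\n'$ acts through one tensor factor, and that $\De\circ\tau_z=(\tau_z\otimes\tau_z)\circ\De$—spells out precisely the bookkeeping the paper leaves implicit.
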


\subsection*{\it\normalsize 2.2.\ Standard modules}

Let us now describe the bimodules $\Ec_{1,0}^z(M_t)$ and $\Ec_{0,1}^z(M_t)$ over $\gl_1$ and $\Y(\gl_n)$ for arbitrary $t,z\in\Cbb$. The Verma module $M_t$ over $\gl_1$ is one-dimensional, and the element $E_{11}\in\gl_1$ acts on $M_t$ by multiplication by $t$. The vector space of bimodules $\Ec_{1,0}(M_t)$ and $\Ec_{0,1}(M_t)$ is the algebra $\Hc(\Cbb^1\otimes\Cbb^n)=\Hc(\Cbb^n)$. Then $E_{11}$ acts on the bimodules $\Ec_{1,0}(M_t)$ and $\Ec_{0,1}(M_t)$ as differential operators
$$
t + \th\dfrac{n}2 - \th\suml{k=1}n\d_{1k}x_{1k}
\qquad \text{and} \qquad
t + \th\dfrac{n}2 + \suml{k=1}nx_{1k}\d_{1k}
$$
respectively.
The action of $E_{11}$ on $\Ec_{p,q}^z(M_t)$ is the same as on $\Ec_{p,q}(M_t)$. The action of $\Y(\gl_n)$ on $\Ec_{1,0}^z(M_t)$ and $\Ec_{0,1}^z(M_t)$ is given by
\beq \label{Tij}
T_{ij}(u)\mapsto\de_{ij}-\th\frac{\d_{1i}x_{1j}}{u+\th(t-z)}
\qquad \text{and} \qquad
T_{ij}(u)\mapsto\de_{ij}+\frac{x_{1i}\d_{1j}}{u+\th(t-z)}
\eeq
respectively, this is what Proposition \ref{prop_bimod} states in the case $m=1$. Note that both operators $x_{1i}\d_{1j}$ and $-\th\d_{1i}x_{1j}$ describe actions of the element $E_{ij}\in\gl_n$ on $\Hc(\Cbb^1\otimes\Cbb^n)=\Hc(\Cbb^n)$. When speaking about these actions we will omit the first indices and write $x_i$ and $\d_j$ instead of $x_{1i}$ and $\d_{1j}$. Hence, actions of the algebra $\Y(\gl_n)$ on $\Ec_{1,0}^z(M_t)$ and $\Ec_{0,1}^z(M_t)$ can be obtained from the actions of $\gl_n$ on $\Hc(\Cbb^n)$ by pulling back through the evaluation homomorphism $\pi_{\th(t-z)}$.

Now, consider $\Y(\gl_n)$-modules $\TPhi_z$ and $\Phi_z$ with the underlying vector space $\Hc(\Cbb^n)$ and Yangian actions defined by
$$
T_{ij}(u)\mapsto\de_{ij}-\th\frac{\d_ix_j}{u+\th z}
\qquad \text{and} \qquad
T_{ij}(u)\mapsto\de_{ij}+\frac{x_i\d_j}{u+\th z}
$$
correspondingly. Therefore, the bimodules $\Ec_{1,0}^z(M_t)$ and $\Ec_{0,1}^z(M_t)$ are respectively isomorphic to $\widetilde\Phi_{t-z}$ and $\Phi_{t-z}$ as the $\Y(\gl_n)$-modules. Moreover, corollary \ref{bimequiv} implies that the bimodule $\Ec_{p,q}(M_\mu)_\n\,$ of $\,\h\in\gl_m\,$ and $\,\Y(\gl_n)\,$ is isomorphic as a $\Y(\gl_n)$-module to the tensor product
\beq \label{tensor}
\TPhi_{\mu_1+\rho_1} \otimes \ldots \otimes \TPhi_{\mu_p+\rho_p}
\otimes
\Phi_{\mu_{p+1}+\rho_{p+1}} \otimes \ldots \otimes \Phi_{\mu_m+\rho_m}
\eeq
where $\rho_a = 1-a$.

Let us also define a $\Y(\gl_n)$-module $\Phi'_z$ with the underlying vector space $\Hc(\Cbb^n)$ and the $\Y(\gl_n)$-action given by
$$
T_{ij}(u) \mapsto \de_{ij}-\frac{x_j\d_i}{u+\th(z-1)}.
$$
Using commutation relation $\d_i x_j - \th x_j \d_i = \de_{ij}$ one can verify that
$$
\de_{ij}-\th\frac{\d_ix_j}{u+\th z} = \frac{u+\th(z-1)}{u+\th z}\hr{\de_{ij}-\frac{x_j\d_i}{u+\th(z-1)}},
$$
which implies the isomorphism of $\Y(\gl_n)$-modules
\pbeq
\begin{split}
\TPhi_z \cong \Om'_z \otimes \Phi'_z &\qquad\text{if}\qquad \th=1, \\
\TPhi_z \cong \Om_{-z} \otimes \Phi'_z &\qquad\text{if}\qquad \th=-1.
\end{split}
\peeq
Hence, the bimodule $\Ec_{p,q}(M_\mu)_\n$ over $\h\in\gl_m$ and $\Y(\gl_n)$ is isomorphic as the $\Y(\gl_n)$-module to the tensor product
\beq \label{standard}
\otimesl{a=1}{p} \Om^\ast_{\th(\mu_a+\rho_a)} \otimes
\otimesl{a=1}{p} \Phi'_{\mu_a+\rho_a} \otimes
\otimesl{a=p+1}{m} \Phi_{\mu_a+\rho_a}
\eeq
where
$$
\Om^\ast_z = \Om'_z \qquad\text{if}\quad \th = 1 \qquad\quad\text{and}\quad\qquad \Om^\ast_z = \Om_z \qquad\text{if}\quad \th = -1.
$$

Note that the $\Y(\gl_n)$-modules $\Phi_z$ and $\Phi'_z$ can also be realized as pull-backs of the $\gl_n$-modules $\Pc(\Cbb)$ and $\Gc(\Cbb)$ through the evaluation and the dual evaluation homomorphisms. Moreover, modules $\Phi_z$ and $\Phi'_z$ are rational, and hence so are their subquotients. We call an $\Y(\gl_n)$-module a \emph{standard rational module} if it is a tensor product of modules $\Phi_z$ and $\Phi'_z$ with arbitrary values of $z$.


\section*{\bf\normalsize 3.\ Zhelobenko operators}
\setcounter{section}{3}
\setcounter{theorem}{0}
\setcounter{equation}{0}

\subsection*{\it\normalsize 3.1.\ Definition}

Consider $\Sg_m$ as the Weyl group of the reductive Lie algebra $\gl_m$. Let $E_{11}^\ast, \dots, E_{mm}^\ast$ be the basis of $\h^\ast$ dual to the basis $E_{11}, \dots, E_{mm}$ of the Cartan subalgebra $\h\subset\gl_m$. The group $\Sg_m$ acts on the space $\h^\ast$ so that for any $\si\in\Sg_m$ and $a=1,\dots,m$
$$
\si \colon E_{aa}^\ast \mapsto E_{\si(a)\si(a)}^\ast.
$$
If we identify each weight $\mu\in\h^\ast$ with the sequence $(\mu_1, \dots, \mu_m)$ of its labels, then
$$
\si \colon (\mu_1, \dots, \mu_m) \mapsto (\mu_{\si^{-1}(1)}, \dots, \mu_{\si^{-1}(m)}).
$$
Let $\rho\in\h^\ast$ be the weight with sequence of labels $(0,-1,\dots,1-m)$. The \emph{shifted} action of any element $\si\in\Sg_m$ on $\h^\ast$ is defined by the assignment
$$
\mu \mapsto \si \circ \mu = \si(\mu + \rho) - \rho.
$$
The Weyl group also acts on the vector space $\gl_m$ so that for any $\si\in\Sg_m$ and $a,b=1,\dots,m$
$$
\si \colon E_{ab} \mapsto E_{\si(a)\si(b)}.
$$
The latter action extends to an action of the group $\Sg_m$ by automorphisms of the associative algebra $\U(\gl_m)$. The group $\Sg_m$ also acts by automorphisms of the space $\HD$ so that element $\si\in\Sg_m$ maps
$$
p_{ai} \mapsto p_{\si(a)i}  \qquad\text{and}\qquad  q_{ai} \mapsto q_{\si(a)i}.
$$
Note that homomorphisms \eqref{gl_emb} and \eqref{yang_hom} of the algebras $\gl_m$ and $\Y(\gl_n)$ into the algebra $\U(\gl_m)\otimes\HD$ are $\Sg_m$-equivariant.

Let $\A$ be the associative algebra generated by the algebras $\U(\gl_m)$ and $\HD$ with the cross relations
\beq
\label{defar}
[X,Y]=[\zeta_n(X),Y]
\eeq
for any $X\in\gl_m$ and $Y\in\HD$. The brackets at the left hand side of the relation \eqref{defar} denote the commutator in $\A$, while the brackets at the right hand side denote the commutator in the algebra $\HD$ embedded into $\A$. In particular, we will regard $\U(\gl_m)$ as a subalgebra of $\A$. An isomorphism of the algebra $\A$ with the tensor product $\U(\gl_m)\otimes\HD$ can be defined by mapping the elements $X\in\gl_m$ and $Y\in\HD$ in $\A$ respectively to the elements
$$
X\otimes1+1\otimes\zeta_n(X)
\quad\text{and}\quad
1\otimes Y
$$
in $\U(\gl_m)\otimes\HD$. The action of the group $\Sg_m$ on $\A$ is defined via the isomorphism of $\A$ with the tensor product $\U(\gl_m)\otimes\HD$. Since the homomorphism $\zeta_n$ is $\Sg_m$-equivariant the same action of $\Sg_m$ is obtained by extending the actions of $\Sg_m$ from the subalgebras $\U(\gl_m)$ and $\HD$ to $\A$.

For any $\,a,b = 1,\dots,m\,$ put $\,\eta_{ab}=E_{aa}^\ast-E_{bb}^\ast\in\h^\ast\,$ and $\,\eta_c = \eta_{c\,c+1}\,$ with $\,c = 1,\dots,m-1$. Put also
\beq\label{efh}
E_{c}=E_{c\,c+1}, \qquad F_{c}=E_{c+1\,c} \qquad\text{and}\qquad H_c = E_{cc}-E_{c+1\,c+1}.
\eeq
For any $c=1,\dots,m-1$ these three elements form an $\slg_2$-triple.

Let $\Uhb$ be the ring of fractions of the commutative algebra $\U(\h)$ relative to the set of denominators
\beq
\label{denset}
\hc{E_{aa}-E_{bb}+z\ |\ 1 \le a,b \le m;\ a \ne b;\  z\in\Zbb}.
\eeq
The elements of this ring can also be regarded as rational functions on the vector space $\h^\ast$. Then the elements of $\U(\h)\subset\Uhb$ become polynomial functions on $\h^\ast$. Denote by $\Ab$ the ring of fractions of $\A$ relative to the set of denominators \eqref{denset}, regarded as elements of $\A$ using the embedding of $\h\subset\gl_m$ into $\A$. The ring $\Ab$ is defined due to the following relations in the algebras $\U(\gl_m)$ and $\A$: for $a,b=1,\dots,m$ and $H\in\h$
\beq \label{Ore}
[H,E_{ab}] = \eta_{ab}(H)E_{ab},
\qquad
[H,p_{ak}] = -\th E_{aa}^\ast(H)p_{ak},
\qquad
[H,q_{bk}] = E_{bb}^\ast(H)q_{bk},
\eeq
where $p_{ak}$ and $q_{bk}$ are given by \eqref{new_coord}. Therefore, the ring $\A$ satisfies the Ore condition relative to its subset \eqref{denset}. Using left multiplication by elements of $\Uhb$, the ring of fractions $\Ab$ becomes a module over $\Uhb$.

The ring $\Ab$ is also an associative algebra over the field $\Cbb$. The action of the group $\Sg_m$ on $\A$ preserves the set of denominators \eqref{denset} so that $\Sg_m$ also acts by automorphisms of the algebra $\Ab$. For each $c=1,\dots,m-1$ define a linear map $\xi_c \colon \A\to\Ab$ by setting
\beq
\label{q1}
\xi_c(Y) = Y+\suml{s=1}\infty(s!H_c^{(s)})^{-1}E_c^s \widehat{F}_c^s(Y)
\eeq
for any $Y\in\A$. Here
$$
H_c^{(s)}=H_c(H_c-1)\dots(H_c-s+1)
$$
and $\widehat{F}_c$ is the operator of adjoint action corresponding to the element $F_c\in\A$, so that
$$
\widehat{F}_c(Y)=[F_c,Y].
$$
For any given element $Y\in\A$ only finitely many terms of the sum \eqref{q1} differ from zero, hence the map $\xi_c$ is well defined.

Let $\J$ and $\Jb$ be the right ideals of the algebras $\A$ and $\Ab$ respectively, generated by all elements of the subalgebra $\n\subset\gl_m$. Let $\Jp$ be the left ideal of the algebra $\A$, generated by the elements $X-\zeta_n(X)$, or equivalently by the elements
\beq \label{differ}
X\otimes1\in\U(\gl_m)\otimes1 \subset \U(\gl_m)\otimes\HD,
\eeq
where $X\in\np$. Denote $\Jpb=\Uhb\Jp$, then $\Jpb$ is a left ideal of the algebra $\Ab$.

Now we give a short observation of some results proved in \cite{KN1}. For any elements $X\in\h$ and $Y\in\A$ we have
$$
\xi_a(X Y) \in (X+\eta_a(X))\xi_a(Y)+\Jb.
$$
This allows us to define a linear map $\bar\xi_a\colon\Ab\to\Jb\,\backslash\,\Ab$
by setting
\beq \label{N14}
\bar\xi_a(XY)=Z\xi_a(Y)+\Jb
\quad\text{for}\quad
X\in\Uhb
\quad\text{and}\quad
Y\in\A
\eeq
where the element $Z\in\Uhb$ is defined by the equality
\beq \label{N1414}
Z(\mu)=X(\mu+\eta_a)
\quad\text{for}\quad
\mu\in\h^\ast
\eeq
and both $X$ and $Z$ are regarded as rational functions on $\h^\ast$. The backslash in $\Jb\,\backslash\,\Ab$ indicates that the quotient is taken relative to a \emph{right} ideal of $\Ab$.

The action of the group $\Sg_m$ on the algebra $\U(\gl_m)$ extends to an action on $\Uhb$ so that
for any $\si\in\Sg_m$
$$
(\si(X))(\mu)=X(\si^{-1}(\mu)),
$$
when the element $X\in\Uhb$ is regarded as a rational function on $\h^\ast$. The action of $\Sg_m$ by automorphisms of the algebra $\A$ then extends to an action by automorphisms of $\Ab$. For any $c=1,\dots, m-1\,$ let $\,\si_c\in\Sg_m\,$ be the transposition of $c$ and $c+1$. Consider the image $\si_c(\Jb)$, that is again a right ideal of $\Ab$. By \cite[Proposition 3.2]{KN1} we have
$$
\si_c(\Jb)\subset\ker\bar\xi_c.
$$

This allows us to define for any $c=1,\dots, m-1$ a linear map
\beq
\label{xic}
\xic_c\colon\Jb\,\backslash\,\Ab\to\Jb\,\backslash\,\Ab
\eeq
as the composition $\bar\xi_c\si_c$ applied to the elements of $\Ab$ taken modulo $\Jb$. The operators $\xic_1,\dots,\xic_{m-1}$ on the vector space $\Jb\,\backslash\,\Ab$ are called \emph{the Zhelobenko operators}. By \cite[Proposition 3.3]{KN1} the Zhelobenko operators $\xic_1,\dots,\xic_{m-1}$ on $\Jb\,\backslash\,\Ab$ satisfy the braid relations
\begin{align*}
\hspace{25pt} \xic_c\,\xic_{c+1}\,\xic_c &\,=\, \xic_{c+1}\,\xic_c\,\xic_{c+1} &\hspace{-50pt} &\text{for}\quad c<m-1, \\
\hspace{25pt} \xic_b\,\xic_c &\,=\, \xic_c\,\xic_b &\hspace{-50pt} &\text{for}\quad |b-c|>1.
\end{align*}
Therefore, for any reduced decomposition $\si=\si_{c_1}\dots\si_{c_k}$ in $\Sg_m$ the composition $\xic_{c_1}\dots\xic_{c_k}$ of operators on $\Jb\,\backslash\,\Ab$ does not depend on the choice of decomposition of $\si$. Finally, for any $\si\in\Sg_m$, $X\in\Uhb$, and $Y\in\Jb\,\backslash\,\Ab$ we have relations
\beq \label{XY}
\xic_{\si}(XY) = (\si\circ X)\xic_{\si}(Y)
\eeq
\beq \label{YX}
\xic_{\si}(YX) = \xic_{\si}(Y)(\si\circ X)
\eeq
which follow from \cite[Proposition 3.1]{KN1}.

\subsection*{\it\normalsize 3.2.\ Intertwining properties}

Let $\de=(\de_1,\dots,\de_m)$ be a sequence of $m$ elements from the set $\hc{1,-1}$. Let the symmetric group $\Sg_m$ act on the set of sequences $\hc{\de}$ by
$$
\si(\de) = \ov{\si\cdot\ov\de}
$$
where
$$
\ov\de = (\de_1,\dots,\de_p,-\de_{p+1},\dots,-\de_{p+q})
\qquad\text{and}\qquad
\si\cdot\de = (\de_{\si^{-1}(1)},\dots,\de_{\si^{-1}(m)}).
$$
Denote
$$
\de^+ = (1,\dots,1)
\qquad\text{and}\qquad
\de' = \ov{\de^+} = (\underbrace{1,\dots,1}_p,\underbrace{-1,\dots,-1}_q).
$$
For a given sequence $\de$ let $\varpi$ denote a composition of automorphisms of the ring $\HD$ such that
\beq \label{varpi}
x_{ak} \mapsto -\th\d_{ak} \quad\text{and}\quad \d_{ak} \mapsto x_{ak}
\qquad\text{whenever}\qquad \de_a = -1.
\eeq
For any $\gl_m$-module $V$ we define a bimodule $\Ec_\de(V)$ over $\gl_m$ and $\Y(\gl_n)$. Its underlying vector space is $V\otimes\H$ for every $\de$. The action of the algebra $\gl_m$ on the module $\Ec_\de(V)$ is defined by pushing the homomorphism $\xi_n$ forward through the automorphism $\varpi$
$$
E_{ab} \;\mapsto\; E_{ab} \otimes 1 + 1 \otimes \varpi(\xi_n(E_{ab})).
$$
The action of the algebra $\Y(\gl_n)$ on the module $\Ec_\de(V)$ is defined by pushing the homomorphism $\al_n$ forward through the automorphism $\varpi$, applied to the second tensor factor $\HD$ of the target of $\al_n$
$$
T_{ij}(u) \;\mapsto\; \de_{ij} + \suml{a,b=1}{m} X_{ab}(u)\otimes\varpi(\Eh_{ai,bj}).
$$
For instance, we have $\Ec_{p,q}(V) = \Ec_{\de^+}(V)$.

Let $\mu\in\h^\ast$ be a generic weight of $\gl_m$, which means that
\beq \label{notinso}
\mu_a-\mu_b\not\in\Zbb \qquad\text{for all}\qquad a,b=1,\dots,m.
\eeq
In the remaining of this section we show that the Zhelobenko operator $\xic_\si$ determines an intertwining operator
\beq \label{distoper}
\Ec_{p,q}(M_\mu)_\n \to \Ec_\de(M_{\si\circ\mu})_\n \qquad\text{where}\qquad \de=\si(\de^+).
\eeq

Let $\I_\de$ be the left ideal of the algebra $\A$ generated by the elements $x_{ak},\;k=1,\dots,n,\,$ for $\de_a=-1$ and by the elements $\d_{ak},\;k=1,\dots,n,\,$ for $\de_a=1$. For instance, ideal $\I_{\de^+}$ is generated by the elements $\d_{ak}$ with $a=1,\dots,m$ and $k=1,\dots,n$. Let $\Ib_\de$ be the left ideal of the algebra $\Ab$ generated by the same elements as the ideal $\I_\de$ in $\A$. Occasionally, $\Ib_\de$ will denote the image of the ideal $\Ib_\de$ in the quotient space $\Jb\,\backslash\,\Ab$.

\begin{prop}
For any $\si\in\Sg_m$ the operator $\xic_\si$ maps the subspace $\Ib_{\de^+}$ to $\Ib_{\si(\de^+)}$.
\end{prop}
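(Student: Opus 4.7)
The plan is to combine a braid-relation induction with explicit computations on generators. Since the braid relations for $\xic_1,\dots,\xic_{m-1}$ are already established, $\xic_{\si}=\xic_{c_1}\dots\xic_{c_k}$ for any reduced expression $\si=\si_{c_1}\dots\si_{c_k}$, so it suffices to prove the following strengthened statement: for every sequence $\de\in\{\pm1\}^m$ and every simple reflection $\si_c$ the inclusion $\xic_c(\Ib_\de)\subseteq\Ib_{\si_c(\de)}$ holds in $\Jb\backslash\Ab$. Iterating this inclusion along a reduced decomposition of $\si$, starting from $\de^+$, then yields the proposition.

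Writing $\xic_c=\bar\xi_c\circ\si_c$ and $\eta=\si_c(\de)$, I would split the analysis into two parts. For the first factor, the key is to verify $\si_c(\I_\de)=\I_\eta$ as a statement in $\A$. When $c\ne p$, both indices $c$ and $c+1$ lie on the same side of the cut at $p$, so $\si_c$ merely permutes the generators $x_{ak}$ and $\d_{ak}$ of $\I_\de$, and the assertion is immediate with $\eta=\si_c\cdot\de$. The case $c=p$ is the crux of the new content: from $p_{pi}=-\th x_{pi}$, $q_{pi}=\d_{pi}$, $p_{p+1,i}=\d_{p+1,i}$, $q_{p+1,i}=x_{p+1,i}$ together with $\si_p(p_{ai})=p_{\si_p(a),i}$ and $\si_p(q_{ai})=q_{\si_p(a),i}$, one reads off $\si_p(x_{pk})=-\th\d_{p+1,k}$ and $\si_p(\d_{pk})=x_{p+1,k}$ together with the mirror identities. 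A short case check on the four possible sign combinations of $(\de_p,\de_{p+1})$ then matches $\si_p(\I_\de)=\I_{\si_p(\de)}$ with the formula $(\si_p(\de))_p=-\de_{p+1}$ and $(\si_p(\de))_{p+1}=-\de_p$ that follows from $\si(\de)=\ov{\si\cdot\ov\de}$.

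For the second factor it remains to show $\bar\xi_c(\Ib_\eta)\subseteq\Ib_\eta+\Jb$. By the definition \eqref{N14} and the weight shift \eqref{N1414}, and because left multiplication by $\Uhb$ preserves $\Ib_\eta$, this reduces to $\xi_c(Y)\in\Ib_\eta+\Jb$ for each generator $Y$ of $\I_\eta$. Expanding $\xi_c(Y)$ via \eqref{q1} and using the cross relation $[F_c,Y]=[\zeta_n(F_c),Y]$ with the explicit form $\zeta_n(F_c)=\sum_l q_{c+1,l}p_{c,l}$, one verifies by induction on $s$ that $\widehat F_c^s(Y)$ lies in $\I_\eta$: a direct evaluation of the bracket on the generators $x_{ak}$ or $\d_{ak}$ produces either a Kronecker-delta multiple of a generator of $\I_\eta$ at positions $(c,c+1)$, or a longer product that visibly has such a generator on the right. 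Since $\I_\eta$ is a left ideal of $\A$, each term $(s!H_c^{(s)})^{-1}E_c^s\widehat F_c^s(Y)$ then lies in $\Ab\cdot\I_\eta=\Ib_\eta$.

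The main obstacle is the case $c=p$, which appears in both steps: the $\gl_m$-automorphism $\si_p$ genuinely exchanges creation and annihilation operators at the boundary between the polynomial and dual regions, and the expression $\zeta_n(F_p)=-\th\sum_l x_{p+1,l}x_{p,l}$ is purely quadratic in the $x$'s, unlike the mixed $\d x$ or $x\d$ expressions for $c\ne p$ which were essentially treated in \cite{KN1} and \cite{KN2}. The saving feature is that the generators of $\I_{\si_p(\de)}$ at positions $p, p+1$ are of precisely the complementary types to those of $\I_\de$, so the explicit action of $\widehat F_p$ on the relevant $x$'s and $\d$'s keeps one inside the target ideal. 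Once this bookkeeping is carried out, the inductive step is complete and the proposition follows.
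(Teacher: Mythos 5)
Your plan rests on the claim that for \emph{every} sequence $\delta\in\{\pm1\}^m$ and \emph{every} simple reflection $\sigma_c$ one has $\check\xi_c(\bar\I_\delta)\subseteq\bar\I_{\sigma_c(\delta)}$; that claim is false, and the gap is precisely what the paper's argument is designed to circumvent. Your step (a), $\sigma_c(\I_\delta)=\I_{\sigma_c(\delta)}$, is correct, but step (b), $\bar\xi_c(\bar\I_\eta)\subseteq\bar\I_\eta+\bar\J$, fails whenever $\eta_c=\delta'_c$ and $\eta_{c+1}=-\delta'_{c+1}$, i.e.\ whenever the generators of $\I_\eta$ at positions $c,c+1$ are of type $q_{ck}$ and $p_{c+1,k}$ respectively. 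In that configuration a direct computation with $\widehat F_c(Y)=\sum_k[q_{c+1,k}p_{ck},Y]$ gives $\widehat F_c(q_{ci})=q_{c+1,i}$, which is \emph{not} a generator of $\I_\eta$ (the generator at position $c+1$ is $p_{c+1,i}$, not $q_{c+1,i}$) and is not a longer word with such a generator on the right. So the asserted induction on $s$ — that $\widehat F_c^s(Y)\in\I_\eta$ for $Y$ a generator of $\I_\eta$ — breaks at $s=1$. This bad configuration is not exotic: even for $c\ne p$ it occurs e.g.\ when $\eta=\sigma_c(\delta)$ with $\delta_c=-1,\delta_{c+1}=1$ and $c<p$.

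What the proof actually needs, and what the paper establishes, is a conditional statement: $\check\xi_a(\bar\I_\delta)\subseteq\bar\I_{\sigma_a(\delta)}$ holds provided the bad configuration is avoided, which can be recorded as the positivity $(\widehat\delta,\eta_a)\ge0$ for $\widehat\delta=\sum_{a\le p}\delta_aE^*_{aa}-\sum_{a>p}\delta_aE^*_{aa}$. The induction is then carried out along a \emph{reduced} decomposition of $\sigma$: the hypothesis $\ell(\sigma_a\sigma)=\ell(\sigma)+1$ forces $\eta_a\in\sigma(\Delta^+)$, so $\eta_a=\sigma(E^*_{bb}-E^*_{cc})$ with $b<c$, and then $(\sigma(\widehat{\delta^+}),\eta_a)=(\sum_a\delta'_aE^*_{aa},E^*_{bb}-E^*_{cc})\ge0$ because $\delta'$ is nonincreasing. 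In other words, the reduced-decomposition constraint is not a cosmetic convenience (as your write-up treats it) but the substantive mechanism that keeps every intermediate sequence out of the bad case. Without this step the argument does not close. I would also push back gently on the diagnosis that the whole difficulty is concentrated at $c=p$: the possible failure of $\bar\xi_c$ to preserve $\bar\I_\eta$ depends on the sign pattern of $\eta$ at positions $c,c+1$, not on whether $c$ straddles the polynomial/dual boundary, and in fact $\delta^+$ itself never produces a bad case at $c=p$ once $\sigma_p$ has been applied.
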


\begin{proof}
For all $a=1,\dots,m-1$ consider the operator $\widehat{F}_a$. Due to \eqref{defar}, \eqref{efh}, and \eqref{zeta} we have
$$
\widehat F_a(Y) = \suml{k=1}n\hs{q_{a+1\,k}p_{ak},Y}
$$
for any $Y\in\HD$. The above description of the action of $\widehat{F}_a$ with $a=1,\dots,m-1$ on the vector space $\HD$ shows that this action preserves each of the two $2n$ dimensional subspaces spanned by the vectors
\begin{align} \label{qqideal}
q_{ai} &\quad\text{and}\quad q_{a+1\,i} \qquad\text{where}\qquad i=1,\dots,n;
\\ \label{ppideal}
p_{ai} &\quad\text{and}\quad p_{a+1\,i} \qquad\text{where}\qquad i=1,\dots,n.
\end{align}
This action also maps to zero the $2n$ dimensional subspace spanned by
\beq \label{pqideal}
p_{ai}  \quad\text{and}\quad q_{a+1\,i} \qquad\text{where}\qquad i=1,\dots,n.
\eeq
Therefore, for any $\de$ the operator $\bar\xi_a$ maps the ideal $\Ib_\de$ of $\Ab$ to the image of $\Ib_\de$ in $\Jb\,\backslash\,\Ab$ unless $\de_a=\de'_a$ and $\de_{a+1}=-\de'_{a+1}$ for $a=1,\dots,m-1$. Hence, the operator $\xic_a = \bar\xi_a\si_a$ maps the subspace $\Ib_\de$ to the image of $\Ib_{\si(\de)}$ unless $\de_a=-\de'_a$ and $\de_{a+1}=\de'_{a+1}$.

From now on we will denote the image of the ideal $\Ib_\de$ in the quotient space $\Jb\,\backslash\,\Ab$ by the same symbol $\Ib_\de$. Put
$$
\widehat{\de}=\suml{a=1}{p}\de_a E_{aa}^\ast - \suml{a=p+1}{p+q}\de_a E_{aa}^\ast.
$$
Then for every $\si\in\Sg_m$ we have the equality $\widehat{\si(\de)}=\si(\widehat{\de})$ where at the right hand side we use the action of the group $\Sg_m$ on $\h^\ast$. Let $(\ ,\,)$ be the standard bilinear form on $\h^\ast$ so that the basis of weights $E_{aa}^\ast$ with $a=1,\dots,m$ is orthonormal. The above remarks on the action of the Zhelobenko operators on $\Ib_\de$ can now be rewritten as
\beq \label{root_cond}
\xic_a(\Ib_\de) \subset \Ib_{\si_a(\de)} \qquad\text{if}\qquad (\,\widehat{\de}\,,\,\eta_a) \ge 0.
\eeq

We will prove the proposition by induction on the length of the reduced decomposition of $\si$. Recall that the length $\ell(\si)$ of a reduced decomposition of $\si$ is the total number of the factors $\si_1,\dots,\si_{m-1}$ in that decomposition. This number is independent of the choice of decomposition and is equal to the number of elements in the set
$$
\De_\si = \hc{\eta\in\De^+\,|\,\si(\eta)\not\in\De^+}
$$
where $\De^+$ denotes the set of all positive roots of the Lie algebra $\gl_m$.

If $\si$ is the identity element of $\Sg_m$ then the statement of the proposition is trivial. Suppose that for some $\si\in\Sg_m$
$$
\xic_{\si}\hr{\Ib_{\de^+}} \subset \Ib_{\si(\de^+)}.
$$
Take $\si_a$ such that
\beq \label{length}
\ell(\si_a\si) = \ell(\si)+1.
\eeq
Now it is only left to prove that
$$
\xic_a\hr{\Ib_{\si(\de^+)}} \subset \Ib_{\si_a\si(\de)}.
$$
Due to \eqref{root_cond}, the desired property will take place if
$$
(\,\widehat{\si(\de^+)}\,,\,\eta_a\,) \,=\, (\,\si(\widehat{\de^+})\,,\,\eta_a\,) \,\ge\, 0.
$$

Note that $\eta_a$ is a simple root of the algebra $\gl_m$ and hence, $\si_a(\eta)\in\De^+$ for any $\eta\in\De^+$ such that $\eta\ne\eta_a$. Since $\ell(\si)$ and $\ell(\si_a\si)$ are the numbers of elements in $\De_\si$ and $\De_{\si_a\si}$ respectively, condition \eqref{length} implies that $\eta_a\in\si(\De^+)$. Therefore, $\eta_a = \si(E_{bb}^\ast - E_{cc}^\ast)$ for some $1 \le b<c \le m$. Thus
$$
\hr{\,\si(\widehat{\de^+})\,,\,\eta_a\,} =
\hr{\,\si(\widehat{\de^+})\,,\,\si(E_{bb}^\ast - E_{cc}^\ast)\,} =
\hr{\,\suml{a=1}{m}\de'_a E_{aa}^\ast\,,\,E_{bb}^\ast - E_{cc}^\ast} \ge 0.
$$
\end{proof}

Then following \cite[Corollary 5.2]{KN3} we obtain
\begin{cor}\label{JIJ}
For any $\si\in\Sg_m$ the operator $\xic_\si$ on $\Jb\,\backslash\,\Ab$ maps
$$
\Jb\,\backslash\,(\Jpb+\Ib_{\de^+}+\Jb)
\,\to\,
\Jb\,\backslash\,(\Jpb+\Ib_{\si(\de^+)}+\Jb).
$$
\end{cor}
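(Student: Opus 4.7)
The plan is to split the statement into two inclusions and add them. The preceding Proposition already gives the inclusion $\xic_\si(\Ib_{\de^+}) \subset \Ib_{\si(\de^+)}$ in $\Jb\backslash\Ab$, so what remains is to establish the complementary inclusion
$$
\xic_\si(\Jpb) \,\subset\, \Jpb + \Jb.
$$
To prove this, I would induct on the length $\ell(\si)$ of a reduced decomposition and use that the operators $\xic_1,\dots,\xic_{m-1}$ satisfy the braid relations on $\Jb\backslash\Ab$, reducing the problem to $\xic_a(\Jpb) \subset \Jpb + \Jb$ for each simple reflection $\si_a$.

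For a fixed $a$, I will use that $\Jp$ is generated as a left ideal of $\A$ by the elements $E - \zeta_n(E)$, $E \in \np$, which under the isomorphism $\A \cong \U(\gl_m)\otimes\HD$ correspond to $E\otimes 1$. By the semi-linearity relations \eqref{XY}--\eqref{YX} the required inclusion reduces to checking $\xic_a(E - \zeta_n(E)) \in \Jpb + \Jb$ on each single generator. I then case-split on whether $\si_a$ keeps the generator inside $\np$. For $E \ne E_a$, the element $\si_a(E - \zeta_n(E)) = \si_a(E) - \zeta_n(\si_a(E))$ is again in $\Jp$, and each summand $(s!H_a^{(s)})^{-1}E_a^s\widehat{F}_a^s(\cdot)$ of the defining series \eqref{q1} remains in $\Jpb + \Jb$: left multiplication by $E_a$ and by the inverse of $H_a^{(s)}$ preserves the left ideal $\Jpb$, while the terms produced by $\widehat{F}_a$ can be tracked modulo $\Jb$. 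For the exceptional case $E = E_a$, one has $\si_a(E_a) = F_a$, so $\si_a(E_a - \zeta_n(E_a))$ corresponds to $F_a \otimes 1$ in $\U(\gl_m)\otimes\HD$; the identity $\widehat{F}_a(F_a \otimes 1) = [F_a, F_a]\otimes 1 = 0$ collapses the series, giving $\xi_a(F_a - \zeta_n(F_a)) = F_a - \zeta_n(F_a)$.

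The main obstacle will be the last step of this second case: showing that the residual element $F_a - \zeta_n(F_a)$ actually lies in $\Jpb + \Jb$. Since $F_a \in \n \subset \Jb$, this reduces modulo $\Jb$ to the assertion $\zeta_n(F_a) = \sum_{k=1}^n q_{a+1,k}\,p_{ak} \in \Jpb + \Jb$, which is delicate because $F_a$ is a negative-root vector of $\gl_m$: by PBW $F_a \notin \U(\gl_m)\np$, so no direct reduction on the $\gl_m$-side is available. The required identity must be extracted by commuting $F_a$ against the generator $E_a - \zeta_n(E_a)$ using \eqref{efh} and the $\HD$-relations \eqref{E_rel1}--\eqref{E_rel3}, and then clearing the Cartan denominators $H_a + z$ that appear via the localization defining $\Uhb$. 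This is precisely the mechanism underlying \cite[Corollary 5.2]{KN3} in the twisted-Yangian framework; the argument there transfers to the present setting upon applying the automorphism \eqref{autom}.
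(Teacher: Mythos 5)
Your reduction to two separate inclusions does not work. The Proposition indeed gives $\xic_\si(\Ib_{\de^+}) \subset \Ib_{\si(\de^+)}$ modulo $\Jb$, but the complementary inclusion you propose, $\xic_\si(\Jpb)\subset\Jpb+\Jb$, is false — the $\Ib$ summand is not a convenience, it is where the escaping terms land. Concretely, in the exceptional case $E=E_a$ of your own analysis: $\si_a$ sends the generator $E_a-\zeta_n(E_a)=E_a\otimes 1$ to $F_a\otimes 1=F_a-\zeta_n(F_a)$, the series for $\xi_a$ collapses to the identity on this element, and since $F_a\in\n$ one gets $\xic_a(E_a-\zeta_n(E_a))\equiv -\zeta_n(F_a)=-\sum_k q_{a+1\,k}\,p_{ak}\pmod{\Jb}$. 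This pure $\HD$ element does not lie in $\Jpb+\Jb$: $\Jpb$ is generated on the right by elements of $U(\gl_m)\otimes 1$, and $\Jb$ by elements of the form $X\otimes1 + 1\otimes\zeta_n(X)$, $X\in\n$, and neither sum produces a bare $1\otimes\zeta_n(F_a)$. On the other hand, $\zeta_n(F_a)$ manifestly lies in $\Ib_{\si_a(\de^+)}$: if $a,a+1\le p$ then $q_{a+1\,k}$ is a generator of $\Ib_{\de^+}$ and $q_{a+1\,k}\,p_{ak}=\th\,p_{ak}\,q_{a+1\,k}$ has a generator on the right; if $a,a+1>p$ then $p_{ak}$ is itself a generator; and if $a=p$ then $p_{pk}$ is a generator of $\Ib_{\si_p(\de^+)}$. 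So the correct statement is $\xic_a(\Jpb)\subset \Jpb+\Ib_{\si_a(\de^+)}+\Jb$, and the $\Ib$ piece cannot be decoupled.

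The commutation trick you propose as a rescue does not produce the needed identity either: computing $[F_a,\,E_a-\zeta_n(E_a)]$ in $\Ab$ (using the cross relation \eqref{defar}) yields $-(H_a-\zeta_n(H_a))$, i.e.\ $-H_a\otimes1$, not $\zeta_n(F_a)$. So there is no way to clear Cartan denominators and recover $\zeta_n(F_a)$ from that bracket. The argument of \cite[Corollary 5.2]{KN3} that the paper invokes therefore cannot be the one you describe; the actual mechanism is that the single escaped generator $-\zeta_n(F_a)$ is absorbed by $\Ib_{\si_a(\de^+)}$, which is precisely why the corollary is phrased for the full sum $\Jpb+\Ib_{\de^+}+\Jb$ rather than for $\Jpb$ alone.

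Your treatment of the non-exceptional generators $E\ne E_a$ and your observation that $\widehat{F}_a(F_a\otimes 1)=0$ collapses the series are both correct, and the induction on $\ell(\si)$ via braid relations is the right organizational frame. But the final step must be: $F_a\otimes1\equiv -\zeta_n(F_a)\in\Ib_{\si_a(\de^+)}\pmod{\Jb}$, after which the proposition handles the downstream propagation.
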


For a generic weight $\mu$ let $\I_{\mu,\de}$ be the left ideal of the algebra $\A$ generated by $\I_{\de}+\Jp$ and by the elements
$$
E_{aa}-\zeta_n(E_{aa})-\mu_a
\quad\text{where}\quad
a=1,\dots,m.
$$
Recall that under the isomorphism of the algebra $\A$ with $\U(\gl_m)\otimes\HD$ element $X-\zeta_n(X)\in\A$ maps to the element \eqref{differ} for every $X\in\gl_m$. Let $\Ib_{\mu,\de}$ denote the subspace $\Uhb\,\I_{\mu,\de}$ of $\Ab$. Note that $\Ib_{\mu,\de}$ is a left ideal of the algebra $\Ab$.

\begin{theorem}
\label{proposition4.5}
For any element $\si\in\Sg_m$ the operator $\xic_\si$ on $\Jb\,\backslash\,\Ab$ maps
$$
\Jb\,\backslash\,(\Ib_{\mu,\de^+}+\Jb)
\,\to\,
\Jb\,\backslash\,(\Ib_{\si\circ\mu,\si(\de^+)}+\Jb).
$$
\end{theorem}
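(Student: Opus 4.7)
The plan is to argue by induction on the length $\ell(\sigma)$, invoking the braid relations to factor $\xic_{\sigma_c\sigma}=\xic_c\,\xic_\sigma$ whenever $\ell(\sigma_c\sigma)=\ell(\sigma)+1$. The base case $\sigma=e$ is trivial. By the inductive hypothesis one may put $\nu=\sigma\circ\mu$ and $\de=\sigma(\de^+)$, so that the inductive step reduces to proving the following \emph{one-step} statement: for any generic weight $\nu$ and any $\de=\tau(\de^+)$ with $\tau\in\Sg_m$ such that $(\widehat\de,\eta_c)\ge0$,
$$
\xic_c\hr{\Ib_{\nu,\de}+\Jb}\subset\Ib_{\si_c\circ\nu,\si_c(\de)}+\Jb.
$$
Corollary~\ref{JIJ} already handles the part of $\Ib_{\nu,\de}$ coming from $\Jpb+\Ib_\de$, so it only remains to verify the statement on the Cartan-shift generators $Z_a=E_{aa}-\zeta_n(E_{aa})-\nu_a$, $a=1,\dots,m$.

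The central technical step is the reduction
$$
\zeta_n(E_{aa})\equiv -\frac{\th n\,(\widehat\de)_a}{2}\pmod{\Ib_\de},
$$
obtained by a direct case analysis using the coordinate identifications \eqref{new_coord} and the canonical commutation relations \eqref{rel_dx}. It implies that $Z_a\equiv E_{aa}-\tilde\nu_a\pmod{\Ib_\de}$ with $\tilde\nu_a=\nu_a-\tfrac{\th n}2(\widehat\de)_a$. Thus $\Ib_{\nu,\de}$ can equally well be generated by the \emph{Cartan-shifted} elements $\{E_{aa}-\tilde\nu_a\}_{a=1}^m$ together with $\Ib_\de$ and $\Jpb$, and these new generators now lie in $\Uhb$. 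For any $Y\in\Ab$, the intertwining property \eqref{YX} then gives
$$
\xic_c\hr{Y(E_{aa}-\tilde\nu_a)}=\xic_c(Y)\cdot\bigl(\si_c\circ(E_{aa}-\tilde\nu_a)\bigr),
$$
and a direct computation of the shifted Weyl action on $\h^*$ yields
$$
\si_c\circ(E_{aa}-\tilde\nu_a)=E_{\si_c(a)\si_c(a)}-\bigl(\tilde\nu_a+\rho_a-\rho_{\si_c(a)}\bigr).
$$
Using $(\si_c\circ\nu)_{\si_c(a)}=\nu_a+\rho_a-\rho_{\si_c(a)}$ together with the equivariance $(\widehat{\si_c(\de)})_{\si_c(a)}=(\widehat\de)_a$, one sees that the right-hand side coincides with $E_{\si_c(a)\si_c(a)}-\widetilde{(\si_c\circ\nu)}_{\si_c(a)}$, which by the same reduction applied to $\si_c\circ\nu$ and $\si_c(\de)$ is congruent to the Cartan-shift generator of $\Ib_{\si_c\circ\nu,\si_c(\de)}$ modulo $\Ib_{\si_c(\de)}$. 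Multiplication by $\xic_c(Y)$ therefore lands in $\Ib_{\si_c\circ\nu,\si_c(\de)}+\Jb$, and combined with Corollary~\ref{JIJ} this closes the induction.

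The main obstacle is the reduction $\zeta_n(E_{aa})\equiv -\th n(\widehat\de)_a/2\pmod{\Ib_\de}$: one must check it in each of the four combinations of $a\le p$ versus $a>p$ and $\de_a=+1$ versus $\de_a=-1$, and in each case use $\d_{ak}x_{ak}-\th x_{ak}\d_{ak}=1$ to absorb the constant produced by normal-ordering against the generator of $\Ib_\de$. Once this step is secured, the remainder of the argument is bookkeeping with the shifted Weyl action and the intertwining properties \eqref{XY}, \eqref{YX}.
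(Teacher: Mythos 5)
Your proposal is correct and rests on the same two pillars as the paper's proof: the reduction $\zeta_n(E_{aa})\equiv -\th n(\widehat\de)_a/2\pmod{\Ib_\de}$, which lets one replace the mixed generators by elements of $\Uhb$, and the intertwining property \eqref{YX} combined with Corollary~\ref{JIJ} to track how $\xic$ moves the resulting ideal. The only difference is organizational: rather than running an induction on $\ell(\si)$, the paper introduces an auxiliary weight $\ka=\mu-\th\tfrac{n}2\de'$ and the ideal $\tilde\I_{\ka,\de}$ with purely Cartan generators $E_{aa}-\ka_a$, applies \eqref{YX} and Corollary~\ref{JIJ} once for general $\si$, and then verifies the single ideal identity $\tilde\I_{\si\circ\ka,\si(\de^+)}=\Ib_{\si\circ\mu,\si(\de^+)}$, which sidesteps the length induction entirely.
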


\begin{proof}
Let $\ka$ be a weight of $\gl_m$ with the sequence of labels $(\ka_1,\dots,\ka_m)$. Suppose that the weight $\ka$ satisfies the conditions \eqref{notinso} instead of $\mu$. Let $\tilde\I_{\ka,\de}$ denote the left ideal of $\Ab$ generated by $\I_\de+\Jp$ and by the elements
$$
E_{aa}-\ka_a \quad\text{where}\quad a=1,\dots,m.
$$
Relation \eqref{YX} and Corollary \ref{JIJ} imply that the operator $\xic_\si$ on $\Jb\,\backslash\,\A$ maps
$$
\Jb\,\backslash\,(\tilde\I_{\ka,\de^+}+\Jb) \,\to\,
\Jb\,\backslash\,(\tilde\I_{\si\circ\ka,\si(\de^+)}+\Jb).
$$
Now choose
\beq
\label{kappa}
\ka=\mu-\th\dfrac{n}2\de'
\eeq
where the sequence $\de'$ is regarded as a weight of $\gl_m$ by identifying the weights with their sequence of labels. Then the conditions on $\ka$ stated in the beginning of this proof are satisfied. For every $\si\in\Sg_m$ we shall prove the equality
of left ideals of $\Ab$,
\beq
\label{lastin}
\tilde\I_{\si\circ\ka,\si(\de^+)} = \Ib_{\si\circ\mu,\si(\de^+)}.
\eeq
Theorem \ref{proposition4.5} will then follow. Denote $\de=\si\cdot\de'$. Then by our choice of $\ka$, we have
\beq
\label{sika}
\si\circ\ka=\si\circ\mu-\th\dfrac{n}2\de.
\eeq

Let the index $a$ run through $1,\dots,m-1$, then
\begin{align*}
\zeta_n(E_{aa})+\th\dfrac{n}2 = \th\suml{k=1}{n}p_{ak}q_{ak}\in\I_{\si(\de^+)}
&\qquad\text{if}\qquad \de_a=1, \\
\zeta_n(E_{aa})-\th\dfrac{n}2 = \phantom{\th}\suml{k=1}{n}q_{ak}p_{ak}\in\I_{\si(\de^+)}
&\qquad\text{if}\qquad \de_a=-1.
\end{align*}
Hence, the relation \eqref{sika} implies the equality \eqref{lastin}.
\end{proof}

Consider the quotient vector space $\A\,/\,\I_{\mu,\de}$ for any sequence $\de$. The algebra $\U(\gl_m)$ acts on this quotient via left multiplication, being regarded as a subalgebra of $\A$. The algebra $\Y(\gl_n)$ also acts on this quotient via left multiplication, using the homomorphism $\al_m:\Y(\gl_n)\to\A$. Recall that in Section~1 the target algebra of the homomorphism $\al_m$
was defined as the tensor product $\U(\gl_m)\otimes\HD$ isomorphic to the algebra $\A$ by means of the cross relations~\eqref{defar}. Part~(ii) of the proposition \ref{prop_bimod} implies that the image of $\al_m$ in $\A$ commutes with the subalgebra $\U(\gl_m)\subs\A$. Thus, the vector space $\A\,/\,\I_{\mu,\de}$ becomes a bimodule over $\gl_m$ and $\Y(\gl_n)$.

Consider the bimodule $\Ec_\de(M_\mu)$ over $\gl_m$ and $\Y(\gl_n)$ defined in the beginning of this section. This bimodule is isomorphic to $\A\,/\,\I_{\mu,\de}$. Indeed, let $Z$ run through $\H$. Then a bijective linear map
$$
\Ec_\de(M_\mu)\to\A\,/\,\I_{\mu,\de},
$$
intertwining the actions of $\gl_m$ and $\Y(\gl_n)$, can be determined by mapping the element
$$
1_\mu\otimes Z \;\in\; M_\mu\otimes\H
$$
to the image of
$$
\varpi^{-1}(Z)\,\in\,\HD\,\subset\,\A
$$
in the quotient $\A\,/\,\I_{\mu,\de}$, where $\varpi$ is defined by \eqref{varpi}. The intertwining property here follows from the definitions of $\Ec_\de(M_\mu)$ and $\I_{\mu,\de}$. The same mapping determines a bijective linear map
\beq \label{fbi}
\Ec_\de(M_\mu)\,\to\,\Ab\,/\,\Ib_{\mu,\de}.
\eeq

In particular, the space $\Ec_\de(M_\mu)_\n$ of $\n$-coinvariants of $\Ec_\de(M_\mu)$ is isomorphic to the quotient $\Jb\,\backslash\,\Ab\,/\,\Ib_{\mu,\de}$ as a bimodule over the Cartan subalgebra $\h\subset\gl_m$ and over $\Y(\gl_n)$. Theorem \ref{proposition4.5} implies that the operator $\xic_\si$ on $\Jb\,\backslash\,\Ab$ determines a linear operator
\beq \label{bbjioper}
\Jb\,\backslash\,\Ab\,/\,\Ib_{\mu,\de^+} \to\,
\Jb\,\backslash\,\Ab\,/\,\Ib_{\si\circ\mu,\si(\de^+)}.
\eeq
The definition \eqref{q1} and the fact that the image of $\Y(\gl_n)$ in $\A$ under $\al_m$ commutes with the subalgebra $\U(\gl_m)\subset\A$ imply that the latter operator intertwines the actions of $\Y(\gl_n)$ on the source and the target vector spaces. We also use the invariance of the image of $\Y(\gl_n)$ in $\A$ under the action of $\Sg_m$. Recall that $\Ec_{p,q}(V)=\Ec_{\de^+}(V)$. Hence, by using the equivalences \eqref{fbi} for the sequences $\de=\de^+$ and $\de=\si(\de^+)$, the operator \eqref{bbjioper} becomes the desired $\Y(\gl_n)$-intertwining operator \eqref{distoper}.

As usual, for any $\gl_m$-module $V$ and any element $\la\in\h^\ast$ let $V^\la\subset V$ be the subspace of vectors \emph{of weight} $\la$ so that any $X\in\h$ acts on $V^\la$ via multiplication by $\la(X)\in\Cbb$. It now follows from the property \eqref{XY} of $\xic_\si$ that the restriction of our operator \eqref{distoper} to the subspace of weight $\la$ is an $\Y(\gl_n)$-intertwining operator
\beq \label{distoperla}
\Ec_{p,q}(M_\mu)_\n^\la \,\to\, \Ec_\de(M_{\si\circ\mu})_\n^{\si\circ\la}
\quad\text{where}\quad\ \de=\si(\de^+).
\eeq

Consider $\Y(\gl_n)$-modules $\Phi_z$ and $\TPhi_z$ described at the end of the Section~2. The underlying vector space of each of these modules coincides with the algebra $\Hc(\Cbb^n)$. Note that the action of $\Y(\gl_n)$ on each of these modules preserves the polynomial degree. Now for any $N=1,2,\dots$ denote respectively by $\Psi_z^N$ and $\Psi_z^{-N}$ the submodules in $\Phi_z$ and $\TPhi_z$ consisting of the polynomial functions of degree $N$. It will also be convenient to denote by $\Psi_z^0$ the vector space $\Cbb$ with the trivial action of $\Y(\gl_n)$.

The element $E_{11}\in\gl_1$ acts on $\Ec_{1,0}(M_t)$ and $\Ec_{0,1}(M_t)$ by
$$
t-\th\frac{n}2-\deg   \qquad\text{and}\qquad   t+\th\frac{n}2+\deg
$$
respectively where $\deg$ is the degree operator. Therefore, corollary \ref{bimequiv} yields the isomorphism between the source $\Y(\gl_n)$-module in \eqref{distoperla} and the tensor product
\beq \label{source}
\Psi_{\mu_1+\rho_1}^{-\nu_1} \otimes \dots \otimes \Psi_{\mu_p+\rho_p}^{-\nu_p} \otimes
\Psi_{\mu_{p+1}+\rho_{p+1}}^{\nu_{p+1}} \otimes \dots \otimes \Psi_{\mu_m+\rho_m}^{\nu_m}
\eeq
where
\beq \label{nu}
\begin{split}
\nu_a =          - \la_a+\mu_a-\th\dfrac{n}2 &\qquad\text{for}\qquad a=  1,\dots,p, \\
\nu_a = \phantom{-}\la_a-\mu_a-\th\dfrac{n}2 &\qquad\text{for}\qquad a=p+1,\dots,m.
\end{split}
\eeq

Let us now consider the target $\Y(\gl_n)$-module in \eqref{distoperla}. For each $a=1,\dots,m$ denote
$$
\widetilde\mu_a=\mu_{\si^{-1}(a)},
\quad
\widetilde\nu_a=\nu_{\si^{-1}(a)},
\quad
\widetilde\rho_a=\rho_{\si^{-1}(a)}.
$$
The above description of the source $\Y(\gl_n)$-module in \eqref{distoperla} can now be generalized to the target $\Y(\gl_n)$-module which depends on an arbitrary element $\si\in\Sg_m$.

\begin{prop} \label{siverma}
For $\de=\si(\de^+)$ the $\Y(\gl_n)$-module $\Ec_\de(M_{\si\circ\mu})_\n^{\si\circ\la}$ is isomorphic to the tensor product
\beq \label{ppd}
\Psi_{\widetilde\mu_1+\widetilde\rho_1}^{-\de_1\widetilde\nu_1}
\otimes \dots \otimes
\Psi_{\widetilde\mu_p+\widetilde\rho_p}^{-\de_p\widetilde\nu_p}
\otimes
\Psi_{\widetilde\mu_{p+1}+\widetilde\rho_{p+1}}^{\de_{p+1}\widetilde\nu_{p+1}}
\otimes \dots \otimes
\Psi_{\widetilde\mu_m+\widetilde\rho_m}^{\de_m\widetilde\nu_m}.
\eeq
\end{prop}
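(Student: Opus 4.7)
My plan is to generalize Corollary~\ref{bimequiv} from $\de^+$ to an arbitrary $\de$ and then mimic the derivation of \eqref{source}. First, I would establish a $\de$-analog of Theorem~\ref{parind}: for any sequence $\de$ of length $m+l$, with first $m$ and last $l$ entries $\de^{(1)}$ and $\de^{(2)}$, the bimodule $\Ec_\de(V\bt U)_\q$ is isomorphic to $\Ec_{\de^{(1)}}(V)\otimes\Ec_{\de^{(2)}}^{\ts m}(U)$. This follows from Theorem~\ref{parind} by pushing forward through the automorphism $\varpi$, which acts independently on each position of $\HD$ and therefore commutes both with the parabolic induction and with taking $\q$-coinvariants.

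Iterating on the Borel subalgebra of $\gl_m$, as in the derivation of Corollary~\ref{bimequiv}, one decomposes $\Ec_\de(M_{\si\circ\mu})_\n$ as a tensor product of $m$ single-variable factors. Using the explicit formulas of Subsection~2.2 pulled through $\varpi$, a four-case check shows that the $a$-th factor is isomorphic as a $\Y(\gl_n)$-module to $\TPhi_{(\si\circ\mu)_a+\rho_a}$ precisely when either $\de_a=1$ and $a\le p$, or $\de_a=-1$ and $a>p$, and to $\Phi_{(\si\circ\mu)_a+\rho_a}$ in the two complementary cases. Since by the definition of the shifted action $(\si\circ\mu)_a+\rho_a=\widetilde\mu_a+\widetilde\rho_a$, each factor is automatically based at the parameter required in \eqref{ppd}.

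Next, I restrict to the $\si\circ\la$-weight subspace, which amounts to fixing the degree $N_a$ of each tensor factor by the $E_{aa}$-eigenvalue condition. In a $\TPhi$-type factor based at $\widetilde\mu_a+\widetilde\rho_a$ the element $E_{aa}$ acts by $(\si\circ\mu)_a-\th n/2-\deg$; equating this to $(\si\circ\la)_a$ gives $N_a=\widetilde\mu_a-\widetilde\la_a-\th n/2$, and the analogous computation for a $\Phi$-type factor yields $N_a=\widetilde\la_a-\widetilde\mu_a-\th n/2$. The hypothesis $\de=\si(\de^+)$ enters precisely here: it implies that a $\TPhi$-type factor appears at position $a\le p$ exactly when $\si^{-1}(a)\le p$, and at position $a>p$ exactly when $\si^{-1}(a)>p$. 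Comparing with \eqref{nu} shows $N_a=\widetilde\nu_a$ in every case, so $\TPhi$-factors contribute $\Psi^{-\widetilde\nu_a}$ while $\Phi$-factors contribute $\Psi^{\widetilde\nu_a}$; a short sign check then assembles these into \eqref{ppd}.

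The main obstacle is the sign bookkeeping at the single-factor level: one must carefully track how the underlying $(p,q)$-splitting (which determines whether each position's coordinates are polynomial- or derivative-like) combines with the action of $\varpi$ (which swaps them wherever $\de_a=-1$) to decide whether a given single factor yields $\TPhi$ or $\Phi$. The coupling imposed by $\de=\si(\de^+)$ is exactly what synchronises these signs so that $N_a$, read off from the weight condition, coincides case by case with $\widetilde\nu_a$ as defined in \eqref{nu}.
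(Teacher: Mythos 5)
Your proof is correct and ultimately follows the same strategy as the paper — decompose $\Ec_\de(M_{\si\circ\mu})_\n$ into the $m$ single-variable factors coming from Corollary~\ref{bimequiv}, identify each as $\Phi$- or $\TPhi$-type, and fix the degree of each factor by the $\si\circ\la$-weight condition. The substantive point in your favour is the \emph{order} of the two operations. You apply $\varpi$ to the full module $\Ec_{p,q}(M_{\si\circ\mu})_\n$ first and only then restrict to the $\si\circ\la$-weight subspace, correctly noting that $\de=\si(\de^+)$ couples the position $a$, the sign $\de_a$, and $\si^{-1}(a)$ so that the resulting degree at every position is $\widetilde\nu_a$. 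The paper reverses the order: it first asserts that $\Ec_{p,q}(M_{\si\circ\mu})_\n^{\si\circ\la}$ is \eqref{bim1} and then pushes through $\varpi$. But $\varpi$ changes the $\h$-action, so these two operations do not commute as subspaces of the underlying vector space. Indeed, when $\si$ mixes the $p$- and $q$-blocks, the actual degree of the $a$-th factor of $\Ec_{p,q}(M_{\si\circ\mu})_\n^{\si\circ\la}$ is $(\si\circ\mu)_a-(\si\circ\la)_a-\th\frac{n}{2}$, which equals $\widetilde\nu_a$ only when $\si^{-1}(a)\le p$ for $a\le p$ (respectively $\si^{-1}(a)>p$ for $a>p$); in the remaining cases it equals $-\widetilde\nu_a-\th n$, so the labeling $\widetilde\nu_a$ in \eqref{bim1} is not literally accurate (and for $\th=1$ the weight subspace taken before $\varpi$ is actually zero at such positions). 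Your version is therefore the cleaner and more defensible writeup of the same argument, and your four-case check of $N_a=\widetilde\nu_a$ is exactly the verification that the paper compresses away.

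One minor caveat: your ``$\de$-analog of Theorem~\ref{parind}'' is a natural restatement, but one does not need a fully general version; applying $\varpi$ factor-wise to the tensor decomposition from Corollary~\ref{bimequiv} already exchanges $\Ec_{1,0}^z(M_t)\leftrightarrow\Ec_{0,1}^z(M_t)$ at the marked positions, which is all the argument uses.
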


\begin{proof}
Consider the bimodule $\Ec_{p,q}(M_{\si\circ\mu})_\n^{\si\circ\la}$ over $\h$ and $\Y(\gl_n)$. By Corollary \ref{bimequiv} and the arguments just above this proposition, this bimodule is isomorphic to the tensor product
\beq \label{bim1}
\Psi_{\widetilde\mu_1+\widetilde\rho_1}^{-\widetilde\nu_1}
\otimes \dots \otimes
\Psi_{\widetilde\mu_p+\widetilde\rho_p}^{-\widetilde\nu_p}
\otimes
\Psi_{\widetilde\mu_{p+1}+\widetilde\rho_{p+1}}^{\widetilde\nu_{p+1}}
\otimes \dots \otimes
\Psi_{\widetilde\mu_m+\widetilde\rho_m}^{\widetilde\nu_m}
\eeq
as a $\Y(\gl_m)$-module. The bimodule $\Ec_\de(M_{\si\circ\mu})_\n$ can be obtained by pushing forward the actions of $\h$ and $\Y(\gl_n)$ on $\Ec_{p,q}(M_{\si\circ\mu})_\n$ through the composition of automorphisms
\beq \label{onefour}
x_a \to -\th\d_a \qquad\text{and}\qquad \d_a \to x_a
\eeq
for every tensor factor with number $a$ such that $\de_a=-1$. These automorphisms exchange $\Y(\gl_n)$-modules $\Phi_{z_a}$ and $\TPhi_{z_a}$ but leave invariant the degree of polynomials. Therefore, they interchange $\Psi_{z_a}^N$ and $\Psi_{z_a}^{-N}$ which implies the resulting $\Y(\gl_n)$-module to be as in \eqref{ppd}.
\end{proof}

Thus, for any non-negative integers $\nu_1,\dots,\nu_m$ we have shown that the Zhelobenko operator $\xic_\si$ on $\Jb\,\backslash\,\Ab$ defines the intertwining operator between the $\Y(\gl_n)$-modules
$$
\Psi_{\mu_1+\rho_1}^{-\nu_1} \otimes \dots \otimes \Psi_{\mu_p+\rho_p}^{-\nu_p} \otimes
\Psi_{\mu_{p+1}+\rho_{p+1}}^{\nu_{p+1}} \otimes \dots \otimes \Psi_{\mu_m+\rho_m}^{\nu_m}
$$
and
$$
\Psi_{\widetilde\mu_1+\widetilde\rho_1}^{-\de_1\widetilde\nu_1}
\otimes \dots \otimes
\Psi_{\widetilde\mu_p+\widetilde\rho_p}^{-\de_p\widetilde\nu_p}
\otimes
\Psi_{\widetilde\mu_{p+1}+\widetilde\rho_{p+1}}^{\de_{p+1}\widetilde\nu_{p+1}}
\otimes \dots \otimes
\Psi_{\widetilde\mu_m+\widetilde\rho_m}^{\de_m\widetilde\nu_m}.
$$
Moreover, the operator $\xic_\si$ permutes tensor factors of the module \eqref{source}, therefore modules $\Ec_{p,q}(M_\mu)_\n^\la$ and $\Ec_\de(M_{\si\circ\mu})_\n^{\si\circ\la}$, written in the form \eqref{standard}, contain similar tensor factors $\Om_{z_a}$ or $\Om'_{z_a}$. Now recall that modules $\Om_{z_a}$ and $\Om'_{z_a}$ are cocentral and that the Zhelobenko operator $\xic_\si$ acts on them as identity operator. These two observations allow us to exclude $\Om_{z_a}$ and $\Om'_{z_a}$ from both source and target modules of the mapping \eqref{distoperla} and to define an intertwining operator $\xi'_\si$ between tensor products of modules $\Phi_z$ and $\Phi'_z$. For any integer $N$ denote respectively by $\Phi_z^N$ and $\Phi_z^{-N}$ the submodules of $\Phi_z$ and $\Phi'_z$ consisting of polynomial functions of degree $N$. Note that $\Phi_z^N$ coincides with $\Psi_z^N$ for positive $N$ but differs for negative $N$. Hence, the following theorem holds.

\begin{theorem}
\label{th_inter}
Given a generic weight $\mu$ and non-negative integers $\nu_1,\dots,\nu_m$, the map $\xi'_\si$ intertwines rational $\Y(\gl_n)$-modules
\beq \label{rational_source}
\Phi_{\mu_1+\rho_1}^{-\nu_1}
\otimes \dots \otimes
\Phi_{\mu_p+\rho_p}^{-\nu_p}
\otimes
\Phi_{\mu_{p+1}+\rho_{p+1}}^{\nu_{p+1}}
\otimes \dots \otimes
\Phi_{\mu_m+\rho_m}^{\nu_m}
\eeq
and
$$
\Phi_{\widetilde\mu_1+\widetilde\rho_1}^{-\de_1\widetilde\nu_1}
\otimes \dots \otimes
\Phi_{\widetilde\mu_p+\widetilde\rho_p}^{-\de_p\widetilde\nu_p}
\otimes
\Phi_{\widetilde\mu_{p+1}+\widetilde\rho_{p+1}}^{\de_{p+1}\widetilde\nu_{p+1}}
\otimes \dots \otimes
\Phi_{\widetilde\mu_m+\widetilde\rho_m}^{\de_m\widetilde\nu_m}.
$$
\end{theorem}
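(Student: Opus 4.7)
The plan is to derive Theorem~\ref{th_inter} as an immediate consequence of Proposition~\ref{siverma} by isolating and then cancelling the one-dimensional cocentral factors that are already visible in the parameterization of the standard modules.

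First, I would restrict the intertwining operator \eqref{distoperla} to the weight subspace of weight $\la$ and invoke Proposition~\ref{siverma} together with the isomorphism \eqref{bim1} to identify both source and target with concrete tensor products of the building blocks $\Psi_z^{\pm N}$. This already produces an intertwining operator between \eqref{source} and \eqref{ppd}, so the remaining task is purely bookkeeping: rewriting those $\Psi$-tensor products in terms of the $\Phi_z^{\pm N}$ and $\Om_z^{\ast}$ described at the end of Section~2.

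Second, I would apply the isomorphisms $\TPhi_z\cong\Om_z^{\ast}\otimes\Phi'_z$ (in either sign convention for $\th$) factor-by-factor to split off an $\Om^{\ast}$-factor from each $\Psi_{z_a}^{-\nu_a}$ in the first $p$ positions of \eqref{source}, obtaining a tensor product of the shape $\bigotimes_a \Om^{\ast}_{z_a}\otimes\bigotimes_a \Phi^{\pm\nu_a}_{z_a}$, precisely the form of the standard rational module \eqref{standard}. The same procedure applied to \eqref{ppd}, together with the identity $\Phi^{-\de_a\widetilde\nu_a}_{\,\cdot\,}=$ (degree $\widetilde\nu_a$ part of $\Phi'$) when $\de_a=-1$ and $\Phi^{\de_a\widetilde\nu_a}$ otherwise, yields an analogous decomposition on the target side. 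Because $\si$ only permutes tensor factors in \eqref{source}, the multiset of $\Om^{\ast}$-factors on the source equals the multiset of $\Om^{\ast}$-factors on the target (their labels $\mu_a+\rho_a$ are merely permuted by $\si$); this matching is exactly the statement that the source and target of \eqref{distoperla}, written in the form \eqref{standard}, have identical $\Om$-parts.

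Third, I would use cocentrality of $\Om_z$ and $\Om'_z$ (which was stated just after the definition of polynomial/rational modules) to permute all $\Om^{\ast}$-factors of source and target to the leftmost positions, and then cancel them. For this cancellation to give a well-defined intertwining operator $\xi'_\si$ between the remaining tensor products, one needs the Zhelobenko operator $\xic_\si$ to act as the identity on each $\Om^{\ast}_z$-factor. This follows from the construction of $\xic_\si$: the $\Om$-modules are one-dimensional, their $T_{ij}(u)$-action is scalar and $\gl_n$-invariant, and in particular the tensor factors $q_{ak},p_{ak}$ associated to them lie in the kernel of $\widehat F_c$ on the relevant $n$-dimensional subspace spanned by \eqref{pqideal} in the proof of the preceding proposition. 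Consequently the summation defining $\xi_c$ in \eqref{q1} collapses to its zeroth term on these factors, and $\xic_\si$ acts trivially.

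The main (minor) obstacle I anticipate is the bookkeeping for the sign conventions: one has to check that, after Proposition~\ref{siverma}'s exchange $\Psi^{N}\leftrightarrow\Psi^{-N}$ at positions where $\de_a=-1$ caused by the automorphism \eqref{onefour}, the resulting $\Phi$-factors are indeed $\Phi^{-\de_a\widetilde\nu_a}_{\widetilde\mu_a+\widetilde\rho_a}$ rather than its dual; this is a short check reducing to the definition of $\Phi'$ and the cases $\th=\pm1$ of $\TPhi\cong\Om^{\ast}\otimes\Phi'$. Once this is verified, the theorem follows immediately.
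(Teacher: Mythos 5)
Your overall route matches the paper's: identify both sides via Proposition~\ref{siverma} with tensor products of the $\Psi$'s, split each $\Psi_{z_a}^{-\nu_a}$ into $\Om^\ast_{\cdot}\otimes\Phi'_{\cdot}$ using $\TPhi_z\cong\Om^\ast_z\otimes\Phi'_z$, observe that $\si$ only permutes the labels so the $\Om^\ast$-multisets on the two sides coincide, and then cancel by cocentrality. This is exactly what the paper does between Proposition~\ref{siverma} and Theorem~\ref{th_inter}.

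However, the justification you offer for the cancellation step is wrong. You assert that $\xic_\si$ acts trivially on the $\Om^\ast$-factors because ``the tensor factors $q_{ak},p_{ak}$ associated to them lie in the kernel of $\widehat F_c$ on the relevant $n$-dimensional subspace spanned by \eqref{pqideal}.'' This cannot be right: the $\Om^\ast_z$-factor is not realized by any $q$- or $p$-generators of $\HD$. The isomorphism $\TPhi_z\cong\Om^\ast_z\otimes\Phi'_z$ is between two $\Y(\gl_n)$-module structures on the \emph{same} underlying vector space $\Hc(\Cbb^n)$, differing only by the scalar twist $g(u)=(u+\th(z-1))/(u+\th z)$; there are no extra $\HD$-variables attached to $\Om^\ast_z$, and \eqref{pqideal} is a $2n$-dimensional subspace that plays an unrelated role (keeping track of how $\bar\xi_a$ moves the ideals $\Ib_\de$). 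So there is literally no $\widehat F_c$-kernel statement to make here. The cancellation is legitimate for the much simpler reason you partly already give: because the $\Om^\ast$-multisets (hence the scalar twists) on source and target are identical, the Yangian actions on both sides before and after removing $\Om^\ast$-factors differ by the \emph{same} power series $g(u)$; a linear map commutes with $g(u)T_{ij}(u)$ on both sides iff it commutes with $T_{ij}(u)$, so the intertwining property transfers directly and nothing further about $\xic_\si$ needs to be argued. If you drop the $\widehat F_c$-kernel claim and replace it with this one-line scalar-cancellation remark, your proof is correct and coincides with the paper's.
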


\section*{\bf\normalsize 4.\ Highest weight vectors}
\setcounter{section}{4}
\setcounter{theorem}{0}
\setcounter{equation}{0}

\subsection*{\it\normalsize 4.1.\ Symmetric case}

In this subsection we consider only the case of commuting variables, hence from now on and till the end of the subsection we assume $\th=1$. Proposition \ref{isiscomm} below determines the image of the highest weight vector $v_\mu^\la$ of the $\Y(\gl_n)$-module $\Ec_{p,q}(M_\mu)_\n^\la$ under the action of the operator $\xic_\si$. The proof of the Proposition~\ref{isiscomm} is based on the following three lemmas. Proofs of the first two of them are similar to the proof of Lemma 5.6 in \cite{KN3}. Proof of the last one is similar to the proof of Lemma 5.7 in \cite{KN3}. Let $s,t=0,1,2\dots$ and $k,\ell=1,\dots,n$.

\begin{lem}
\label{ppnorma}
For any\/ $a=1,\dots,m-1$ the operator\/ $\xic_a$ on\/ $\Jb\,\backslash\,\Ab$ maps the image in\/ $\Jb\,\backslash\,\Ab$ of\/ $p_{ak}^s p_{a+1\,k}^t\in\Ab$ to the image in\/ $\Jb\,\backslash\,\Ab$ of the product
$$
p_{ak}^t p_{a+1\,k}^s\cdot
\prodl{r=1}s\,\frac{H_a+r+1}{H_a+r-t}
$$
plus the images in\/ $\Jb\,\backslash\,\Ab$ of elements of the left ideal in\/ $\Ab$ generated by\/ $\Jpb$ and \eqref{qqideal}.
\end{lem}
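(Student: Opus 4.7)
The plan is to evaluate $\xic_a(p_{ak}^s p_{a+1\,k}^t + \Jb) = \xi_a(\si_a(p_{ak}^s p_{a+1\,k}^t)) + \Jb$ directly from formula \eqref{q1}. Because the $p$'s commute when $\th=1$, applying $\si_a$ merely exchanges the two factors, so the task reduces to computing $\xi_a(p_{ak}^t p_{a+1\,k}^s)$ modulo $\Jb$ and the left ideal of $\Ab$ generated by $\Jpb$ and the elements of \eqref{qqideal}.

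I would first compute the finitely many nonzero terms in the defining series for $\xi_a$. Using the cross relation $[X,Y]=[\zeta_n(X),Y]$ for $X\in\gl_m$ and $Y\in\HD$ together with $\zeta_n(F_a)=\sum_\ell q_{a+1,\ell}\,p_{a\ell}$, one gets $\widehat F_a(p_{a+1\,k})=-p_{ak}$ and $\widehat F_a(p_{ak})=0$; a Leibniz expansion then yields
\[
\widehat F_a^{\,s'}(p_{ak}^t p_{a+1\,k}^s) \;=\; (-1)^{s'}\frac{s!}{(s-s')!}\, p_{ak}^{t+s'} p_{a+1\,k}^{s-s'}, \qquad 0 \le s' \le s.
\]

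The heart of the argument is the reduction of each summand modulo the prescribed left ideal, to be carried out in two stages. Let $L_a = E_a - \zeta_n(E_a)$; this is a generator of $\Jp$, and a direct check shows that $L_a$ commutes with every $p_{bj}$ and with $\zeta_n(E_a)$. Consequently, in the binomial expansion of $E_a^{s'} = (L_a+\zeta_n(E_a))^{s'}$ every factor of $L_a$ can be moved to the extreme right of $E_a^{s'}\!\cdot p_{ak}^{t+s'} p_{a+1\,k}^{s-s'}$, placing $\bigl(E_a^{s'}-\zeta_n(E_a)^{s'}\bigr) p_{ak}^{t+s'} p_{a+1\,k}^{s-s'}$ inside $\Jpb$. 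For the second stage I would use the alternative form $\zeta_n(E_a)=\sum_\ell p_{a+1,\ell}\, q_{a\ell}$, valid because $q_{a\ell}$ and $p_{a+1,\ell}$ commute; this already presents $\zeta_n(E_a)$ with the $q$'s on the right. The commutator $[\zeta_n(E_a), p_{ak}] = -p_{a+1\,k}$ then gives, by induction on $s'$,
\[
\zeta_n(E_a)^{s'} p_{ak}^{t+s'} p_{a+1\,k}^{s-s'} \;\equiv\; (-1)^{s'}(t+1)(t+2)\cdots(t+s')\, p_{ak}^t p_{a+1\,k}^s
\]
modulo the left ideal of $\Ab$ generated by the $q_{a\ell}$. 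Finally, $[H_a, p_{ak}]=-p_{ak}$ and $[H_a, p_{a+1\,k}]=p_{a+1\,k}$ let me transport $(s'!H_a^{(s')})^{-1}$ past $p_{ak}^t p_{a+1\,k}^s$, producing the factor $\bigl((H_a+s-t)(H_a+s-t-1)\cdots(H_a+s-t-s'+1)\bigr)^{-1}$ on the right.

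After the two $(-1)^{s'}$'s cancel and one collects $\tfrac{1}{s'!}\cdot\tfrac{s!}{(s-s')!}=\binom{s}{s'}$, the claim reduces to the rational function identity
\[
\sum_{s'=0}^{s} \binom{s}{s'}\frac{(t+1)(t+2)\cdots(t+s')}{(H_a+s-t-s'+1)\cdots(H_a+s-t)} \;=\; \prod_{r=1}^{s}\frac{H_a+r+1}{H_a+r-t}.
\]
This can be proved either by clearing denominators and matching polynomials in $H_a$, or more cleanly by recasting the summand as $\binom{t+s'}{s'}\binom{H_a+s-t-s'}{s-s'}\big/\binom{H_a+s-t}{s}$ and invoking the Chu--Vandermonde convolution $\sum_{k=0}^{n}\binom{r+k}{k}\binom{s-k}{n-k}=\binom{r+s+1}{n}$. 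The step I expect to require most care is the bookkeeping in the second reduction: one must keep $\zeta_n(E_a)$ written with its $q$-factors on the right, because if instead one commutes $\zeta_n(E_a)$ to the left of $p_{ak}^{t+s'}$ the residual $\de$-terms from $[p_{ak},q_{a\ell}]=\de_{k\ell}$ come out on the opposite side and get absorbed into the left ideal, destroying the very $H_a$-dependent coefficient that the lemma records.
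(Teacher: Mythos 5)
Your proof is correct, and it follows what is essentially the canonical line of attack for this type of lemma (the paper itself defers to Lemma 5.6 of \cite{KN3} rather than giving details): expand $\xi_a$ via the defining series \eqref{q1}, compute the adjoint action $\widehat F_a^{\,s'}$ explicitly, replace $E_a^{s'}$ by $\zeta_n(E_a)^{s'}$ modulo the left ideal generated by $\Jpb$ (using that $L_a=E_a-\zeta_n(E_a)$ commutes with all of $\HD$ by the cross relation \eqref{defar}), reduce $\zeta_n(E_a)^{s'}p_{ak}^{t+s'}p_{a+1\,k}^{s-s'}$ modulo the $q$-generated ideal, conjugate the factor $(s'!H_a^{(s')})^{-1}$ to the right using $[H_a,p_{ak}]=-p_{ak}$, $[H_a,p_{a+1\,k}]=p_{a+1\,k}$, and finally verify the resulting rational-function identity. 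All the intermediate formulas you write down check out, and the Chu--Vandermonde reformulation of the final identity is valid: the summand indeed rewrites as $\binom{t+s'}{s'}\binom{H_a+s-t-s'}{s-s'}\bigl/\binom{H_a+s-t}{s}$, the convolution gives $\binom{H_a+s+1}{s}\bigl/\binom{H_a+s-t}{s}$, and this equals $\prod_{r=1}^{s}\frac{H_a+r+1}{H_a+r-t}$. One small clarification regarding your closing caveat: the ``absorption into the ideal'' concern is not really about which side $\zeta_n(E_a)$ is on, but simply about writing $\zeta_n(E_a)=\sum_\ell p_{a+1\,\ell}q_{a\ell}$ so that the $q$-factors emerge on the right after commuting past the $p$'s, which you do correctly.
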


\begin{lem}
\label{qqnorma}
For any\/ $a=1,\dots,m-1$ the operator\/ $\xic_a$ on\/ $\Jb\,\backslash\,\Ab$ maps the image in\/ $\Jb\,\backslash\,\Ab$ of\/ $q_{ak}^s q_{a+1\,k}^t\in\Ab$ to the image in\/ $\Jb\,\backslash\,\Ab$ of the product
$$
q_{ak}^t q_{a+1\,k}^s\cdot
\prodl{r=1}t\,\frac{H_a+r+1}{H_a+r-s}
$$
plus the images in\/ $\Jb\,\backslash\,\Ab$ of elements of the left ideal in\/ $\Ab$ generated by\/ $\Jpb$ and \eqref{ppideal}.
\end{lem}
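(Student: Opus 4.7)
The plan is to compute $\xic_a$ directly from its definition $\xic_a=\bar\xi_a\circ\si_a$ using the explicit series \eqref{q1}. Since $\theta=1$ the variables $q_{ak}$ and $q_{a+1,k}$ commute, so $\si_a(q_{ak}^s q_{a+1,k}^t)=q_{ak}^t q_{a+1,k}^s$, and the task reduces to evaluating $\xi_a(q_{ak}^t q_{a+1,k}^s)$ modulo $\Jb$ together with the left ideal generated by $\Jpb$ and \eqref{ppideal}.

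The derivation $\widehat{F}_a(Y)=[\zeta_n(F_a),Y]=\sum_\ell[q_{a+1,\ell}\,p_{a\ell},Y]$ satisfies $\widehat{F}_a(q_{ak})=q_{a+1,k}$ and $\widehat{F}_a(q_{a+1,k})=0$, so a direct iteration yields $\widehat{F}_a^{\,s'}(q_{ak}^t q_{a+1,k}^s)=\frac{t!}{(t-s')!}\,q_{ak}^{t-s'}q_{a+1,k}^{s+s'}$ for $0\le s'\le t$, so the series \eqref{q1} truncates to a finite sum. Next I transport each $E_a^{s'}$ to the right of $q_{ak}^{t-s'}q_{a+1,k}^{s+s'}$ via the expansion $E_a^{s'}X=\sum_{j=0}^{s'}\binom{s'}{j}\widehat{E}_a^{\,j}(X)\,E_a^{s'-j}$, noting $\widehat{E}_a(q_{ak})=0$ and $\widehat{E}_a(q_{a+1,k})=q_{ak}$. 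The summands with $s'-j\ge 1$ end in a nontrivial power of $E_a$; using $E_a-\zeta_n(E_a)=E_a-\sum_\ell q_{a\ell}p_{a+1,\ell}\in\Jp$ together with $[p_{a+1,\ell},E_a]=0$, and an easy induction showing $[X-\zeta_n(X),E_a]\in\Jp$ for $X\in\np$, one verifies that every product $Y\cdot E_a^{n}$ with $n\ge1$ lies in $\Jpb$ plus the left ideal generated by \eqref{ppideal}. Thus only the $j=s'$ contributions survive, each giving $\widehat{E}_a^{\,s'}(q_{ak}^{t-s'}q_{a+1,k}^{s+s'})=\frac{(s+s')!}{s!}\,q_{ak}^t q_{a+1,k}^s$.

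To finish, I push $q_{ak}^t q_{a+1,k}^s$ through the scalar prefactor $(s'!H_a^{(s')})^{-1}$ using the weight relation $H_a q_{ak}^t q_{a+1,k}^s=q_{ak}^t q_{a+1,k}^s(H_a+t-s)$, which reduces the problem to the rational identity
\[
\sum_{s'=0}^{t}\binom{t}{s'}\frac{(s+s')!}{s!}\,\prod_{j=0}^{s'-1}(H_a+t-s-j)^{-1}\;=\;\prod_{r=1}^{t}\frac{H_a+r+1}{H_a+r-s},
\]
which I will verify by induction on $t$ (or by clearing denominators and comparing two polynomials of degree $t$ in $H_a$ at $t+1$ values). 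The principal technical point is the error-term analysis controlling the $E_a^{s'-j}$ tails; once that is established, the remaining computation is an elementary coefficient calculation entirely parallel to \cite[Lemma 5.7]{KN3}.
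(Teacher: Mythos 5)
Your proposal is correct, and the strategy is the expected one: compute $\xic_a=\bar\xi_a\si_a$ directly from the series \eqref{q1}, observe the truncation $\widehat{F}_a^{\,s'}(q_{ak}^tq_{a+1,k}^s)=\frac{t!}{(t-s')!}q_{ak}^{t-s'}q_{a+1,k}^{s+s'}$ for $s'\le t$ (and $=0$ beyond), expand $E_a^{s'}$ by the adjoint binomial formula, and kill the tails via $E_a-\zeta_n(E_a)\in\Jp$ together with the fact that $\zeta_n(E_a)=\sum_\ell q_{a\ell}p_{a+1,\ell}$ ends in generators of the ideal \eqref{ppideal}. The weight relation $H_a\,q_{ak}^tq_{a+1,k}^s=q_{ak}^tq_{a+1,k}^s(H_a+t-s)$ and the closing rational identity are both right (I checked $t=1,2$ directly; your proposed induction or degree comparison will finish it). The paper itself gives no argument for this lemma and instead cites the analogous Lemma 5.6 of [KN3], whose method is exactly this direct expansion of the Zhelobenko series, so your approach coincides with the one the paper intends. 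Minor remark: your error-term analysis invokes $[p_{a+1,\ell},E_a]=0$ and an auxiliary induction on $[X-\zeta_n(X),E_a]\in\Jp$, but neither is needed; the single identity $YE_a^{n}=YE_a^{n-1}(E_a-\zeta_n(E_a))+\sum_\ell YE_a^{n-1}q_{a\ell}\,p_{a+1,\ell}$ already puts the tail in $\Jpb$ plus the left ideal generated by \eqref{ppideal}.
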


\begin{lem}
\label{pqnorma}
For any\/ $a=1,\dots,m-1$ the operator\/ $\xic_a$ on\/ $\Jb\,\backslash\,\Ab$ maps the image in\/ $\Jb\,\backslash\,\Ab$ of\/ $p_{ak}^s q_{a+1\,\ell}^t\in\Ab$ to the image in\/ $\Jb\,\backslash\,\Ab$ of the product
$$
q_{a\ell}^t p_{a+1\,k}^s\cdot
\left\{
\begin{aligned}
\prodl{r=1}s\,\frac{H_a+r}{H_a+r+t} & \qquad\text{if}\quad n=1 \quad\text{and}\quad k=\ell=1, \\
1\hspace{30pt} &\qquad\text{if}\quad n>1 \quad\text{and}\quad k\ne\ell
\end{aligned}
\right.
$$
plus the images in\/ $\Jb\,\backslash\,\Ab$ of elements of the left ideal in\/ $\Ab$ generated by\/ $\Jpb$ and \eqref{pqideal}.
\end{lem}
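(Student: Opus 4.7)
The plan is to follow the proof strategy of Lemma 5.7 in \cite{KN3}. Since $\xic_a = \bar\xi_a \circ \si_a$ and $\si_a$ exchanges the indices $a$ and $a+1$ in the $\HD$-generators, the problem reduces to computing $\xi_a(Y)$ with $Y = p_{a+1,k}^s q_{a,\ell}^t$ modulo the left ideal generated by $\Jpb$ and \eqref{pqideal}. Using the series \eqref{q1}, this amounts to analyzing
$$\xi_a(Y) = Y + \sum_{r\ge 1}(r!\,H_a^{(r)})^{-1}\,E_a^r\,\widehat F_a^r(Y).$$
Since $\widehat F_a = [\sum_j q_{a+1,j}p_{a,j},\cdot]$ acts as a derivation on $\HD$ with $\widehat F_a(p_{a+1,k}) = -p_{a,k}$, $\widehat F_a(q_{a,\ell}) = q_{a+1,\ell}$, and $\widehat F_a(p_{a,k}) = \widehat F_a(q_{a+1,\ell}) = 0$, the iterate $\widehat F_a^r(Y)$ expands via the multivariate Leibniz rule as a combinatorial sum of monomials $(-p_{a,k})^i\,p_{a+1,k}^{s-i}\cdot q_{a+1,\ell}^j\,q_{a,\ell}^{t-j}$ with $i+j = r$.

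In the case $n>1$ and $k\ne\ell$, the factors $p_{a,k}$ and $q_{a+1,\ell}$ produced by $\widehat F_a$ commute with both $p_{a+1,k}$ and $q_{a,\ell}$ (the relevant brackets vanish since $k\ne\ell$). Hence every monomial with $i+j\ge 1$ rearranges to place a generator of \eqref{pqideal} on the far right, putting the expression in the left ideal $\Ab\cdot\{p_{a,k},\,q_{a+1,\ell}\}$; this property is preserved under left multiplication by $E_a^r$. Only the $r=0$ term survives, giving $\xi_a(Y)\equiv p_{a+1,k}^s q_{a,\ell}^t = q_{a,\ell}^t p_{a+1,k}^s$ modulo the ideal (using $[p_{a+1,k},q_{a,\ell}]=0$), which matches the stated answer with the empty product $\prod_{r=1}^s 1 = 1$.

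In the case $n=1$ and $k=\ell=1$, write $P=p_{a+1,1}$, $P'=p_{a,1}$, $Q=q_{a,1}$, $Q'=q_{a+1,1}$; the only non-trivial cross-commutators are $[P,Q']=[P',Q]=1$. All terms $r=0,\dots,\min(s,t)$ now contribute nontrivially to the non-ideal part of $\xi_a(P^s Q^t)$, because commuting the $P'$'s and $Q'$'s (produced by $\widehat F_a^r$) past $P^{s-i}Q^{t-j}$ to the right generates residual scalars from the brackets $[P,Q']$ and $[P',Q]$. Replacing $E_a\equiv\zeta_n(E_a)=QP$ modulo $\Jpb$, and using that $Q$ and $P$ commute, the non-ideal contribution of each summand reduces to a scalar multiple of $Q^t P^s$. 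A direct combinatorial computation, using the shift relations $H_a\,P=P(H_a+1)$ and $H_a\,Q=Q(H_a+1)$ to move $H_a^{(r)}$ past $Q^t P^s$ and expanding in partial fractions, yields
$$\xi_a(P^s Q^t) \;\equiv\; Q^t P^s\cdot\prod_{r=1}^{s}\frac{H_a+r}{H_a+r+t}\pmod{\text{ideal}},$$
matching the claim.

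The main obstacle is the combinatorial bookkeeping in the $n=1$ case: individual summands at $r\ge 1$ each produce nonzero residues after commuting the ideal generators to the right, and the scalar coefficient of $Q^t P^s$ emerges only after careful telescoping of the sum weighted by $(r!\,H_a^{(r)})^{-1}$. Following \cite{KN3}, one organizes the computation by tracking how each $(-P')^i(Q')^j$ monomial, commuted past the surrounding $Q^{t-j}P^{r+s-i}$ together with the factor $Q^r P^r$ coming from $E_a^r$, contributes a specific residue; the binomial factors from $\widehat F_a^r$ and the inverses $(H_a^{(r)})^{-1}$ then combine into the product form.
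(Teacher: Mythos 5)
Your approach is correct and matches what the paper intends: the paper gives no explicit argument for this lemma, stating only that the proof is ``similar to the proof of Lemma~5.7 in [KN3],'' which is precisely the route you take. Your setup — reducing to $\xi_a$ applied to $p_{a+1\,k}^s q_{a\ell}^t$ after $\si_a$, computing $\widehat F_a(p_{a+1,k}) = -p_{a,k}$, $\widehat F_a(q_{a,\ell}) = q_{a+1,\ell}$ with the other two generators annihilated, splitting the two cases by whether the cross-commutators $[p_{a,k},q_{a,\ell}]$ and $[p_{a+1,k},q_{a+1,\ell}]$ vanish, and replacing $E_a$ by $\zeta_n(E_a)=q_{a,1}p_{a+1,1}$ modulo $\Jpb$ (valid because $X-\zeta_n(X)$ commutes with $\HD$) — is all correct and supplies rather more detail than the paper itself. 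One small sharpening: in the $n=1$ case the Leibniz expansion of $\widehat F_a^r$ runs over $i+j=r$ with $r$ up to $s+t$, but only the $j=0$ terms survive modulo the left ideal (moving $q_{a+1,1}$ right produces no residue), which is what caps the effective range at $r\le\min(s,t)$; this is implicit in your outline but worth stating, since the final combinatorial identity $\sum_{r}(-1)^r\binom{s}{r}\binom{t}{r}r!/(H_a{+}s{+}t)^{(r)} = \prod_{r=1}^s (H_a{+}r)/(H_a{+}r{+}t)$ is the content deferred to [KN3].
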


We keep assuming that the weight $\mu$ satisfies the condition \eqref{notinso}. Let $(\mu^\ast_1,\dots,\mu^\ast_m)$ be the sequence of labels of weight $\mu+\rho-\frac{n}2\de'$. Suppose that for all $a=1,\dots,m$ the number $\nu_a$ defined by \eqref{nu} is a non-negative integer. For each positive root $\eta=E_{bb}^\ast-E_{cc}^\ast\in\Delta^+$ with $1\le b<c \le m$ define a number $z_\eta\in\Cbb$ by
$$
z_\eta =
\left\{
\begin{array}{llll}
\displaystyle
\prodl{r=1}{\nu_b}\frac{\mu^\ast_b-\mu^\ast_c-r}{\la^\ast_b-\la^\ast_c+r}
&\quad\text{if}\qquad
b,c = 1,\dots,p, \\[12pt]
\displaystyle
\prodl{r=1}{\nu_c}\frac{\mu^\ast_b-\mu^\ast_c-r}{\la^\ast_b-\la^\ast_c+r}
&\quad\text{if}\qquad
b,c = p+1,\dots,m, \\[12pt]
\displaystyle
\prodl{r=1}{\upsilon}\frac{\mu^\ast_b-\mu^\ast_c-r+1}{\la^\ast_b-\la^\ast_c+r-1}
&\quad\text{if}\qquad
\hspace{-8.5pt} \mbox{ $\begin{array}{rcl}
\displaystyle
b&=&1,\dots,p, \\
c&=&p+1,\dots,m,
\end{array}$} & \quad\text{and}\quad n=1, \\[12pt]
1, &\quad\text{if}\qquad
\hspace{-8.5pt} \mbox{ $\begin{array}{rcl}
\displaystyle
b&=&1,\dots,p, \\
c&=&p+1,\dots,m,
\end{array}$} & \quad\text{and}\quad n>1.
\end{array}
\right.
$$
Here $\upsilon=\min(\nu_b,\nu_c)$.
Let $v_{\mu}^\la$ denote the image of the vector
$$
\prodl{a=1}{p}\,p_{an}^{\nu_a} \;\cdot\! \prodl{a=p+1}{m}\!\!q_{a1}^{\nu_a} \,\in\, \Ab
$$
in the quotient space $\Jb\,\backslash\,\Ab/\,\Ib_{\mu,\de^+}$.

\begin{prop} \label{isiscomm}
{\rm\,\,(i)}
The vector $v_{\mu}^\la$ does not belong to the zero coset in $\Jb\,\backslash\,\Ab/\,\Ib_{\mu,\de^+}$. \\
{\rm\,\,(ii)}
The vector $v_{\mu}^\la$ is of weight $\lambda$ under the action of\/ $\h$ on\/
$\Jb\,\backslash\,\Ab/\,\Ib_{\mu,\de^+}$ and is of highest weight with respect to the action of $\Y(\gl_n)$ on the same quotient space. \\
{\rm\,\,(iii)}
For any\/ $\si\in\Sg_m$ the operator \eqref{bbjioper} determined by\/ $\xic_\si$ maps the vector\/
$v_{\mu}^\la$ to the image in\/ $\Jb\,\backslash\,\Ab/\,\Ib_{\si\circ\mu,\si(\de^+)}$ of\/
$\si(v_{\mu}^\la)\in\Ab$ multiplied by $\prods{\eta\in\De_\si} \!z_\eta$.
\end{prop}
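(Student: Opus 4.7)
The plan for parts (i) and (ii) is to transfer the question to the tensor-product description of $\Ec_{p,q}(M_\mu)_\n$. Under the isomorphism \eqref{fbi} with $\de=\de^+$ the automorphism $\varpi$ is trivial, so $v_\mu^\la$ corresponds to the class of
\[
1_\mu \otimes \Bigl(\prodl{a=1}{p}(-x_{an})^{\nu_a} \cdot \!\prodl{a=p+1}{m}\!x_{a1}^{\nu_a}\Bigr) \in M_\mu \otimes \H.
\]
Via Corollary~\ref{bimequiv} this is the pure tensor whose $a$-th factor is $1 \otimes (-x_n)^{\nu_a}$ in $\TPhi_{\mu_a+\rho_a}$ for $a \le p$ and $1 \otimes x_1^{\nu_a}$ in $\Phi_{\mu_a+\rho_a}$ for $a > p$; each factor is a nonzero polynomial, which proves (i). For (ii), the explicit formulas for the $E_{11}$-action on $\Ec_{1,0}(M_t)$ and $\Ec_{0,1}(M_t)$ given in Section~2.2 show that $E_{aa}$ acts on $1 \otimes x_n^{\nu_a}$ by $\mu_a - n/2 - \nu_a$ and on $1 \otimes x_1^{\nu_a}$ by $\mu_a + n/2 + \nu_a$; these equal $\la_a$ by \eqref{nu} with $\th=1$, so the weight under $\h$ is $\la$. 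The highest-weight property under $\Y(\gl_n)$ reduces via the comultiplication \eqref{1.33} to the fact that $x_n^N$ is a highest weight vector of $\TPhi_z$ and $x_1^N$ of $\Phi_z$, which is immediate from the action formulas for $T_{ij}(u)$: for $i<j$ one has $\d_i x_j \cdot x_n^N = 0$ (since $i \ne j$ and $i<j\le n$ forces $i \ne n$) and $x_i \d_j \cdot x_1^N = 0$ (since $j>1$).

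For (iii) I would proceed by induction on $\ell(\si)$, with the trivial base case $\si=e$, $\De_e=\varnothing$. For the inductive step, choose $\si_a$ with $\ell(\si_a \si')=\ell(\si')+1$ and set $\si=\si_a\si'$; the braid relations give $\xic_\si=\xic_a\,\xic_{\si'}$. The induction hypothesis
\[
\xic_{\si'}(v_\mu^\la)\,\equiv\,c_{\si'}\cdot\si'(v_\mu^\la)\pmod{\Ib_{\si'\circ\mu,\,\si'(\de^+)}+\Jb},\qquad c_{\si'}=\prods{\eta\in\De_{\si'}}\!z_\eta,
\]
together with Theorem~\ref{proposition4.5} applied to $\si$, reduces the task to evaluating $\xic_a\bigl(\si'(v_\mu^\la)\bigr)$ modulo $\Ib_{\si\circ\mu,\si(\de^+)}+\Jb$ and identifying it with $z_{\eta}\cdot\si(v_\mu^\la)$ for the distinguished root $\eta=(\si')^{-1}(\eta_a)$.

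Using the commutation relations among $p_{bk},q_{bk}$ at distinct indices $b$, separate $\si'(v_\mu^\la)$ into the factor $M_1$ at positions $a,a+1$ and a factor $M_2$ at all other positions. The operator $\widehat F_a$ kills $M_2$ and $H_a$ commutes with $M_2$, so $\xic_a$ acts only on $M_1$. Writing $b=(\si')^{-1}(a)$ and $c=(\si')^{-1}(a+1)$, the length condition forces $b<c$, giving three patterns for $M_1$: $p_{a,n}^{\nu_b}p_{a+1,n}^{\nu_c}$ if $b,c\le p$; $q_{a,1}^{\nu_b}q_{a+1,1}^{\nu_c}$ if $b,c>p$; and $p_{a,n}^{\nu_b}q_{a+1,1}^{\nu_c}$ if $b\le p<c$ (the fourth logical case $c\le p<b$ is incompatible with $b<c$). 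Applying Lemma~\ref{ppnorma}, \ref{qqnorma}, or \ref{pqnorma} respectively, the exponents swap --- yielding exactly $\si_a(M_1)M_2 = \si(v_\mu^\la)$, as a direct check confirms in each of the three patterns --- with a right factor in $\Uhb$ involving $H_a$; the correction terms ``plus elements of the left ideal'' lie in $\Ib_{\si\circ\mu,\si(\de^+)}+\Jb$ by the same argument as in the proof of Corollary~\ref{JIJ}, hence vanish in the target quotient.

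It remains to identify the surviving $\Uhb$-factor with $z_\eta$. I would use $[H_a,p_{ak}]=-p_{ak}$, $[H_a,p_{a+1,k}]=p_{a+1,k}$, and the analogous relations for $q_{a,k},q_{a+1,k}$ to push the factor past $\si(v_\mu^\la)$, and then evaluate $H_a$ on the image in the quotient using the weight $\si\circ\la$ and the normalization $\ka=\mu-\th\frac n2\de'$ from the proof of Theorem~\ref{proposition4.5}. This converts the raw fractions $\frac{H_a+r+1}{H_a+r-t}$, $\frac{H_a+r+1}{H_a+r-s}$, and $\frac{H_a+r}{H_a+r+t}$ from the three lemmas into $\frac{\mu^*_b-\mu^*_c-r}{\la^*_b-\la^*_c+r}$ and $\frac{\mu^*_b-\mu^*_c-r+1}{\la^*_b-\la^*_c+r-1}$, precisely the factors in the definition of $z_\eta$. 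The main obstacle is this final bookkeeping: verifying case by case that the three patterns --- and the subsplit $n=1$ vs $n>1$ in Lemma~\ref{pqnorma} --- reproduce the corresponding cases of $z_\eta$, with the shift produced by pushing $H_a$ past $\si(v_\mu^\la)$ together with the $\ka$-normalization yielding the correct $\mu^*_b-\mu^*_c$ and $\la^*_b-\la^*_c$ expressions under the identifications $b=(\si')^{-1}(a)$, $c=(\si')^{-1}(a+1)$.
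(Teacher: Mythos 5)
Your proposal is correct and follows essentially the same route as the paper: parts (i)--(ii) via the tensor-product description of $\Ec_{p,q}(M_\mu)_\n$, and part (iii) by induction on $\ell(\si)$ using Lemmas~\ref{ppnorma}--\ref{pqnorma}, the disjointness $\De_{\si_a\si'}=\De_{\si'}\sqcup\{\eta\}$, and the ideal equality $\Ib_{(\si_a\si')\circ\mu,\,(\si_a\si')(\de^+)}=\tilde\I_{(\si_a\si')\circ\ka,\,(\si_a\si')(\de^+)}$. The one place where your bookkeeping diverges slightly from the paper's is in evaluating the $\Uhb$-factor in (iii): the paper keeps it on the right of $\si(v_\mu^\la)$ and substitutes $H_a\equiv((\si_a\si')\circ\ka)(H_a)=-\mu^\ast_b+\mu^\ast_c-1$ directly from the left-ideal generators $E_{aa}-((\si_a\si')\circ\ka)_a$, whereas you propose to push $H_a$ to the left and then use the $\h$-weight $\si\circ\la$; these agree (the ad-weight shift of $\si(v_\mu^\la)$ exactly converts the $\si\circ\la$ eigenvalue into the $\si\circ\ka$ right-eigenvalue), but the paper's version is more economical since it avoids computing the shift.
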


\begin{proof}
Part (i) follows directly from the definition of the ideal $\Ib_{\mu,\de^+}$. Let us now prove Part (ii). Subalgebra $\h$ acts on the quotient space $\Jb\,\backslash\,\Ab/\,\Ib_{\mu,\de^+}$ via left multiplication on $\Ab$. Let symbol $\equiv$ denote the equalities in $\Ab$ modulo the left ideal $\Ib_{\mu,\de^+}$. Then we have
\begin{gather*}
E_{aa} v_{\mu}^\la \;=\;
v_{\mu}^\la E_{aa} + \hs{\zeta_n(E_{aa})\,,\,v_{\mu}^\la} \,=\;
v_{\mu}^\la(E_{aa}\mp\nu_a) \;\equiv\;
v_{\mu}^\la\hr{\zeta_n(E_{aa})+\mu_a\mp\nu_a}
\;\equiv \\[4pt] \equiv\;
v_{\mu}^\la\hr{\mp\frac{n}2+\mu_a\mp\nu_a} \;=\;
\la_a v_{\mu}^\la
\end{gather*}
where one should choose the upper sign for $a=1,\dots,p$\/ and the lower sign for $a=p+1,\dots,m$. Next, $\Y(\gl_n)$-modules $\Jb\,\backslash\,\Ab/\,\Ib_{\mu,\de^+}$ and $\Ec_{p,q}(M_\mu)_\n$ are isomorphic. Recall that $T_{ij}(u)$ acts on $\Ec_{p,q}(M_\mu)_\n$ with the help of the comultiplication $\De$. Now Corollary~\ref{bimequiv} and formulas~\eqref{Tij} imply that $T_{ij}(u)v_{\mu}^\la = 0$ for all $1 \le i<j \le n$. Moreover, one can check that
$$
T_{ii}(u)v_\mu^\la =
v_\mu^\la \cdot
\left\{
\begin{aligned}
&\prodl{a=p+1}{m}\frac{u+\mu_a+\rho_a+\nu_a}{u+\mu_a+\rho_a}, & &i=1 \\
&\prodl{a=1}{p}\frac{u+\mu_a+\rho_a-\nu_a-1}{u+\mu_a+\rho_a}, & &i=n \\
&1, & &\text{otherwise.}
\end{aligned}
\right.
$$
Therefore, $v_\mu^\la$ is a highest weight vector with respect to the action of $\Y(\gl_n)$ on the quotient space $\Jb\,\backslash\,\Ab/\,\Ib_{\mu,\de^+}$.

We will prove Part (iii) by induction on the length of a reduced decomposition of $\si$. If $\si$ is the identity element of $\Sg_m$, then the required statement is tautological. Now suppose that for some $\si\in\Sg_m$ the statement of (iii) is true. Take any simple reflection $\si_a\in\Sg_m$ with $1\le a\le m-1$ such that $\si_a\si$ has a longer reduced decomposition in terms of $\si_1,\dots,\s_m$ than $\si$.

Consider the simple root $\eta_a$, corresponding to the reflection $\si_a$. Let $\eta=\si^{-1}(\eta_a)$ then
$$
\si_a\si(\eta)=\si_a(\eta_a)=-\eta_a\notin\De^+.
$$
Thus, $\De_{\si_a\si}=\De_\si\sqcup\hc{\eta}$. Let $\ka\in\h^\ast$ be the weight with the labels determined by \eqref{kappa}. Using the proof of Theorem~\ref{proposition4.5}, we get the equality of two left ideals of the algebra $\A$,
$$
\Ib_{\ts(\si_a\si)\circ\mu\ts,\ts(\si_a\si)(\de^+)} \,=\, \tilde\I_{\ts(\si_a\si)\circ\ka\ts,\ts(\si_a\si)(\de^+)}.
$$
Modulo the second of these two ideals the element $H_a$ equals
\be
\notag
((\si_a\si)\circ\ka)(H_a) \,=\, (\si_a\si(\ka+\rho)-\rho)(H_a) \,=\, (\ka+\rho)(\si^{-1}\si_a(H_a))-\rho(H_a) \,=\, \\[8pt]
-(\ka+\rho)(\si^{-1}(H_a))-1 \,=\, -(\ka+\rho)(H_\eta)-1 \,=\, -\mu^\ast_b+\mu^\ast_c-1.
\ee
Here $H_\eta = \si^{-1}(H_a)$ is the coroot corresponding to the root $\eta$, and we use the standard bilinear form on $\h^\ast$.

Let us use the statement of (iii) as the induction assumption. Denote $\de=\si(\de^+)$. Consider three cases.

I. Let $b,c = 1,\dots,p$, then $\de_a = \de'_a$ and $\de_{a+1} = \de'_{a+1}$. Hence,
$$
\si\hr{v_{\mu}^\la} = p_{an}^{\nu_b}p_{a+1\,n}^{\nu_c}Y
$$
where $Y$ is an element of the subalgebra of $\PD$ generated by all $x_{dk}$ and $\d_{dk}$ with $d\ne a,\,a+1$. Now we apply the Lemma~\ref{ppnorma} with $s=\nu_b$ and $t=\nu_c$. After substitution $-\mu^\ast_b + \mu^\ast_c - 1$ for $H_a$, the fraction in the lemma turns into
$$
\prodl{r=1}{\nu_b}\frac{-\mu^\ast_b+\mu^\ast_c +r}{-\mu^\ast_b+\mu^\ast_c+\nu_b-\nu_c-r} \;=\; \prodl{r=1}{\nu_b}\frac{\mu^\ast_b-\mu^\ast_c-r}{\la^\ast_b-\la^\ast_c+r}.
$$

II. Let $b,c = p+1,\dots,m$, then $\de_a = -\de'_a$ and $\de_{a+1} = -\de'_{a+1}$. Hence,
$$
\si\hr{v_{\mu}^\la} = q_{a1}^{\nu_b}q_{a+1\,1}^{\nu_c}Y
$$
where $Y$ is an element of the subalgebra of $\PD$ generated by all $x_{dk}$ and $\d_{dk}$ with $d\ne a,\,a+1$. Now we apply the Lemma~\ref{qqnorma} with $s=\nu_b$ and $t=\nu_c$. After substitution $-\mu^\ast_b + \mu^\ast_c - 1$ for $H_a$, the fraction in the lemma turns into
$$
\prodl{r=1}{\nu_c}\frac{-\mu^\ast_b+\mu^\ast_c+r}{-\mu^\ast_b+\mu^\ast_c+\nu_c-\nu_b-r} \;=\; \prodl{r=1}{\nu_c}\frac{\mu^\ast_b-\mu^\ast_c-r}{\la^\ast_b-\la^\ast_c+r}.
$$

III. Let $b=1,\dots,p, \; c=p+1,\dots,m$, then $\de_a=\de'_a$ and $\de_{a+1}=-\de'_{a+1}$. Hence,
$$
\si\hr{v_{\mu}^\la} = p_{an}^{\nu_b}q_{a+1\,1}^{\nu_c}Y
$$
where $Y$ is an element of the subalgebra of $\PD$ generated by all $x_{dk}$ and $\d_{dk}$ with $d\ne a,\,a+1$. Now we apply the Lemma~\ref{pqnorma} with $s=\nu_b$ and $t=\nu_c$. When $n=1$, after substitution $-\mu^\ast_b + \mu^\ast_c - 1$ for $H_a$, the fraction in the lemma turns into
$$
\prodl{r=1}{\upsilon}\frac{-\mu^\ast_b+\mu^\ast_c+r-1}{-\mu^\ast_b+\mu^\ast_c+\nu_b+\nu_c-r} \;=\; \prodl{r=1}{\upsilon}\frac{\mu^\ast_b-\mu^\ast_c-r+1}{\la^\ast_b-\la^\ast_c+r-1}.
$$

Thus, in the three cases considered above the operator
$$
\xic_{\si_a\si}\colon\Jb\,\backslash\,\Ab\,/\,\Ib_{\ts\mu,\de^+} \to\, \Ib_{(\si_a\si)\circ\mu\ts,\ts(\si_a\si)(\de^+)}
$$
maps the vector $v_{\mu}^\la$ to the image of
$$
\si_a\si(v_{\mu}^\la) \,\,\cdot\!\!\prods{\eta\in\De_{\si_a\si}} \!\!z_\eta \in\Ab
$$
in the vector space $\Jb\,\backslash\,\Ab\,/\,\Ib_{(\si_a\si)\circ\mu\ts,\ts(\si_a\si)(\de^+)}$. This observation makes the induction step.
\end{proof}

\subsection*{\it\normalsize 4.2.\ Skew-symmetric case}

In this subsection we consider only the case of anticommuting variables, hence from now on and till the end of this subsection we assume $\th=-1$. Proposition \ref{isisanticomm} below determines the image of the highest weight vector $v_\mu^\la$ of the $\Y(\gl_n)$-module $\Ec_{p,q}(M_\mu)_\n^\la$ under the action of the operator $\xic_\si$. The proof of the Proposition~\ref{isisanticomm} is based on the following three lemmas. Proofs of the first two of them are similar to the proof of Lemma 5.6 in \cite{KN4}. Proof of the last one is similar to the proof of Lemma 5.7 in \cite{KN4}. Let $s,t=1,\dots,n$, define
\begin{align*}
f_{as} = p_{a\,n-s+1} \dots p_{an} \qquad\text{and}\qquad g_{as} = q_{a1} \dots q_{as}.
\end{align*}

\begin{lem}
\label{antippnorma}
For any\/ $a=1,\dots,m-1$ the operator\/ $\xic_a$ on\/ $\Jb\,\backslash\,\Ab$ maps the image in\/ $\Jb\,\backslash\,\Ab$ of\/ $f_{as} f_{a+1\,t}\in\Ab$ to the image in\/ $\Jb\,\backslash\,\Ab$ of the product
$$
f_{a+1\,s} f_{at} \cdot
\left\{
\begin{aligned}
\frac{H_a+s-t+1}{H_a+1} & \qquad\text{if}\quad s>t, \\
1\hspace{30pt} &\qquad\text{if}\quad s \le t
\end{aligned}
\right.
$$
plus the images in\/ $\Jb\,\backslash\,\Ab$ of elements of the left ideal in\/ $\Ab$ generated by\/ $\Jpb$ and \eqref{qqideal}.
\end{lem}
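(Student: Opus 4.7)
The plan is to reduce the claim to a direct calculation using the series formula \eqref{q1} for $\xi_a$, paralleling \cite[Lemma~5.6]{KN4}. First I would apply $\sigma_a$, an algebra automorphism swapping rows $a$ and $a+1$, to obtain $\sigma_a(f_{a,s}f_{a+1,t})=f_{a+1,s}f_{a,t}$, so the task reduces to computing $\xi_a(f_{a+1,s}f_{a,t})$ modulo $\Jb$ and the left ideal $\mathcal{I}$ generated by $\Jpb$ together with \eqref{qqideal}. From \eqref{defar} and \eqref{zeta} one sees that $\widehat F_a$ is a derivation of $\HD$ with $\widehat F_a(p_{a,i})=0$ and $\widehat F_a(p_{a+1,i})=-p_{a,i}$, so it annihilates $f_{a,t}$, and the Leibniz rule gives $\widehat F_a^k(f_{a+1,s}f_{a,t})=(-1)^k k!\sum_{|J|=k}R_J f_{a,t}$, where $J$ runs over $k$-element subsets of $\{1,\dots,s\}$ and $R_J$ is obtained from $f_{a+1,s}$ by replacing $p_{a+1,n-s+j}$ by $p_{a,n-s+j}$ for each $j\in J$.

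The key observation is that $R_J f_{a,t}=0$ whenever some $j\in J$ satisfies $n-s+j>n-t$, since then $p_{a,n-s+j}$ appears in both $R_J$ and $f_{a,t}$ and its Grassmann square vanishes. Hence only subsets $J\subset\{1,\dots,s-t\}$ contribute, which settles the case $s\le t$: the only admissible $J$ is empty, so $\widehat F_a^k(f_{a+1,s}f_{a,t})=0$ for every $k\ge 1$ and $\xi_a$ acts as the identity. For $s>t$ I would next replace each $E_a$ by $\zeta_n(E_a)=\sum_\ell q_{a,\ell}p_{a+1,\ell}$, using $E_a-\zeta_n(E_a)\in\Jp$ and the commutativity $[E_a,\zeta_n(E_a)]=0$, and push each $q_{a,\ell}$ to the right through $p_{a+1,\ell}R_J f_{a,t}$: it anticommutes past every generator except $p_{a,\ell}$, where the Clifford relation $q_{a,\ell}p_{a,\ell}=1-p_{a,\ell}q_{a,\ell}$ produces a delta contribution, while the residual term with $q_{a,\ell}$ at the far right lies in $\mathcal{I}$. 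A careful sign count then establishes
\[
\zeta_n(E_a)\,R_J f_{a,t}\equiv -\sum_{j\in J}R_{J\setminus\{j\}}f_{a,t}\pmod{\mathcal{I}}.
\]

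Iterating this yields $\zeta_n(E_a)^k R_J f_{a,t}\equiv(-1)^k k!\,f_{a+1,s}f_{a,t}$ when $|J|=k$, so summing over the $\binom{s-t}{k}$ admissible $J$ together with the prefactor $(-1)^k k!$ from $\widehat F_a^k$ produces $E_a^k\widehat F_a^k(f_{a+1,s}f_{a,t})\equiv(k!)^2\binom{s-t}{k}\,f_{a+1,s}f_{a,t}\pmod{\mathcal{I}+\Jb}$. Since $f_{a+1,s}f_{a,t}$ has $H_a$-weight $s-t$, moving each $1/H_a^{(k)}$ through it replaces $H_a$ by $H_a+s-t$, and summing leaves $f_{a+1,s}f_{a,t}\cdot\sum_{k=0}^{s-t}\frac{k!\binom{s-t}{k}}{(H_a+s-t)^{(k)}}$. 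A short induction on $N$ verifies the rational identity $\sum_{k=0}^N\frac{k!\binom{N}{k}}{x^{(k)}}=\frac{x+1}{x-N+1}$, and at $x=H_a+s-t$, $N=s-t$ this yields the claimed factor $\frac{H_a+s-t+1}{H_a+1}$. The main obstacle will be the Grassmann sign bookkeeping when commuting $q_{a,\ell}$ through $R_J f_{a,t}$ and verifying that the residual terms really lie in the left ideal generated by \eqref{qqideal}; since that ideal is not two-sided, each discarded term must be rewritten so that a $q$-variable appears at its rightmost position.
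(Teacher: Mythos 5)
Your proposal is correct and follows essentially the same route as the paper, which cites the argument of \cite[Lemma 5.6]{KN4} as the model. The plan---first apply $\si_a$ to get $f_{a+1,s}f_{a,t}$, compute $\widehat F_a^k$ via the derivation property and note that Grassmann nilpotency forces $J\subset\{1,\dots,s-t\}$, then replace $E_a$ by $\zeta_n(E_a)$ modulo $\Jpb$ and commute the $q_{a,\ell}$'s through to land in the left ideal generated by \eqref{qqideal}---is exactly the intended one, and the final rational identity $\sum_{k=0}^{N}\frac{k!\binom{N}{k}}{x^{(k)}}=\frac{x+1}{x-N+1}$ together with the $H_a$-weight $s-t$ of $f_{a+1,s}f_{a,t}$ gives the stated factor; I checked the sign bookkeeping for $|J|=1$ and the chain count $(k!)^2\binom{s-t}{k}$, and both come out as you claim.

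One small remark: your $H_a$-weight $s-t$ is obtained from $[H_a,p_{ai}]=-p_{ai}$ and $[H_a,p_{a+1,i}]=+p_{a+1,i}$, which is consistent with a direct calculation $[\zeta_n(E_{aa}),p_{ak}]=-p_{ak}$ in the $\th=-1$ case; the displayed relation \eqref{Ore} in the paper has what looks like a stray $\th$ in the $p$-relation (it would give the opposite sign for $\th=-1$), so do not be alarmed that your weight appears to contradict it---your value is the one that makes the lemma come out. You should also make sure, when discarding the "residual" terms with a $q_{a,\ell}$ at the far right, to record that $\ell=n-s+j$ with $j\in J\subset\{1,\dots,s-t\}$ forces $\ell\le n-t$, so $q_{a,\ell}$ never meets a matching $p_{a,\ell}$ inside $f_{a,t}$ and indeed anticommutes all the way through; this is what guarantees the discarded term genuinely lies in the left ideal generated by \eqref{qqideal}.
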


\begin{lem}
\label{antiqqnorma}
For any\/ $a=1,\dots,m-1$ the operator\/ $\xic_a$ on\/ $\Jb\,\backslash\,\Ab$ maps the image in\/ $\Jb\,\backslash\,\Ab$ of\/ $g_{as} g_{a+1\,t}\in\Ab$ to the image in\/ $\Jb\,\backslash\,\Ab$ of the product
$$
g_{a+1\,s} g_{at} \cdot
\left\{
\begin{aligned}
\frac{H_a+t-s+1}{H_a+1} & \qquad\text{if}\quad s<t, \\
1\hspace{30pt} &\qquad\text{if}\quad s \ge t.
\end{aligned}
\right.
$$
plus the images in\/ $\Jb\,\backslash\,\Ab$ of elements of the left ideal in\/ $\Ab$ generated by\/ $\Jpb$ and \eqref{ppideal}.
\end{lem}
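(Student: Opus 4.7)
My plan is to use the decomposition $\xic_a = \bar\xi_a\circ\si_a$. Since the $\Sg_m$-action just exchanges indices $a$ and $a+1$, I have $\si_a(g_{as}g_{a+1,t}) = g_{a+1,s}g_{at}$, so the problem reduces to computing $\xi_a(g_{a+1,s}g_{at})$ modulo $\Jb$ via the explicit formula~\eqref{q1}. As preparation I would first verify, using the cross relations~\eqref{defar}, that both $\widehat F_a$ and $E_a$ act on $\HD$ as ordinary (non-super) derivations with $\widehat F_a\colon q_{a,i}\mapsto q_{a+1,i}$ (annihilating $q_{a+1,i}$ and all other $q$'s) and $E_a$ reversing this arrow. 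The computation is a routine commutator in $\HD$ using $\zeta_n(F_a)=\sum_k q_{a+1,k}p_{a,k}$ and $\zeta_n(E_a)=\sum_k q_{a,k}p_{a+1,k}$.

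The Leibniz rule together with $\widehat F_a(g_{a+1,s})=0$ then gives
\[
\widehat F_a^r(g_{a+1,s}g_{at}) \,=\, r!\,g_{a+1,s}\sum_{|J|=r,\;J\subset\{1,\dots,t\}}g_{at}^{(J)},
\]
where $g_{at}^{(J)}$ denotes the monomial obtained from $g_{at}$ by replacing $q_{a,j}$ with $q_{a+1,j}$ for $j\in J$. Here I would exploit the Grassmann identity $q^2=0$: if $J$ meets $\{1,\dots,s\}$, the product $g_{a+1,s}g_{at}^{(J)}$ contains $q_{a+1,j}^{2}$ and vanishes. Hence only $J\subset\{s+1,\dots,t\}$ survive, which already handles the case $s\ge t$---the sum is empty, every $\widehat F_a^r$ with $r\ge 1$ vanishes, $\xi_a$ acts as the identity, and the factor is $1$.

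For $s<t$ I apply $E_a^r$ to each surviving term. An entirely parallel anticommutative argument, now at indices $i\le s$ (which already carry $q_{a,i}$ inside $g_{at}^{(J)}$ since $J\subset\{s+1,\dots,t\}$), shows that $E_a^r$ cannot usefully demote any of the $q_{a+1,i}$'s in $g_{a+1,s}$; it must therefore hit exactly the $r$ positions in $J$, undoing $\widehat F_a^r$. This yields
\[
E_a^r\,\widehat F_a^r(g_{a+1,s}g_{at}) \,=\, (r!)^2\binom{t-s}{r}\,g_{a+1,s}g_{at}.
\]
Substituting into~\eqref{q1} and commuting $H_a^{(r)}$ past the weight vector via $[H_a,g_{a+1,s}g_{at}]=(t-s)\,g_{a+1,s}g_{at}$ produces $\xi_a(g_{a+1,s}g_{at})=g_{a+1,s}g_{at}\cdot S$, with $S=\sum_{r=0}^{t-s} r!\binom{t-s}{r}/(H_a+t-s)^{(r)}$.

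The last step is the hypergeometric identity $\sum_{r=0}^{N}r!\binom{N}{r}/X^{(r)} = (X+1)/(X-N+1)$, proved by a one-line induction on $N$; with $N=t-s$ and $X=H_a+t-s$ this delivers the claimed factor $(H_a+t-s+1)/(H_a+1)$. No $p$'s and no elements of $\n$ appear anywhere in the calculation, so the reduction modulo $\Jb$ and modulo the left ideal generated by $\Jpb$ and~\eqref{ppideal} is automatic. The main obstacle is the careful Grassmann bookkeeping in the two derivation expansions---one must be confident that the $q_{a+1,j}^2$ collapses pin both sums to the diagonal configuration---after which the rest reduces to mechanical algebra, just as in \cite[Lemma~5.6]{KN4}.
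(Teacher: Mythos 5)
Your proof is correct and takes exactly the approach the paper intends: the paper itself gives no explicit argument for this lemma but refers the reader to the proof of Lemma~5.6 of \cite{KN4}, and your computation is the direct analogue of that, with the same derivation bookkeeping and the same terminating-hypergeometric (Chu--Vandermonde) identity. I verified your three key claims: the Leibniz expansion $\widehat F_a^{\,r}(g_{a+1\,s}g_{at})=r!\,g_{a+1\,s}\sum_{|J|=r}g_{at}^{(J)}$ with the Grassmann collapse to $J\subset\{s+1,\dots,t\}$; the identity $E_a^r\widehat F_a^{\,r}(g_{a+1\,s}g_{at})\equiv(r!)^2\binom{t-s}{r}\,g_{a+1\,s}g_{at}$ modulo the stated ideals; and the summation $\sum_{r=0}^{N}r!\binom{N}{r}/X^{(r)}=(X+1)/(X-N+1)$, which with $X=H_a+t-s$, $N=t-s$ gives $(H_a+t-s+1)/(H_a+1)$, as claimed.

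One imprecision worth flagging: you treat $E_a$ in \eqref{q1} as if it were the derivation $\widehat E_a=[\zeta_n(E_a),\,\cdot\,]$ sending $q_{a+1\,i}\mapsto q_{ai}$, but in \eqref{q1} the factor $E_a^r$ acts by \emph{left multiplication}, not adjoint action. The bridge between the two is precisely the reduction modulo $\Jpb$ and the $p$-ideal: for $Y\in\HD$ one has $E_a Y = \widehat E_a(Y) + Y(E_a-\zeta_n(E_a)) + \sum_k(Yq_{ak})p_{a+1\,k}$, where the second term lies in $\Jp$ and the third in the left ideal generated by \eqref{ppideal}; iterating (using also $\widehat E_a(p_{a+1\,k})=0$) gives $E_a^r Y\equiv \widehat E_a^{\,r}(Y)$ modulo those ideals. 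So your closing remark that the reduction "is automatic" is slightly misleading---these ideals are invoked in the middle of the computation, not just discarded at the end---but the argument itself is sound.
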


\begin{lem}
\label{antipqnorma}
For any\/ $a=1,\dots,m-1$ the operator\/ $\xic_a$ on\/ $\Jb\,\backslash\,\Ab$ maps the image in\/ $\Jb\,\backslash\,\Ab$ of\/ $f_{as} g_{a+1\,t}\in\Ab$ to the image in\/ $\Jb\,\backslash\,\Ab$ of the product
$$
f_{a+1\,s} g_{at} \cdot
\left\{
\begin{aligned}
\frac{H_a+s+t-n+1}{H_a+1} & \qquad\text{if}\quad s+t>n, \\
1\hspace{30pt} &\qquad\text{if}\quad s+t \le n.
\end{aligned}
\right.
$$
plus the images in\/ $\Jb\,\backslash\,\Ab$ of elements of the left ideal in\/ $\Ab$ generated by\/ $\Jpb$ and \eqref{pqideal}.
\end{lem}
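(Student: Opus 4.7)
The plan is to mirror the proof of Lemma~5.7 in \cite{KN4}, adjusted for the Grassmann (anticommuting) setting $\th=-1$. Since $\xic_a = \bar\xi_a\si_a$, the first step is to apply $\si_a$: as $\si_a$ swaps the first index $a \leftrightarrow a+1$ in $\HD$, we have $\si_a(f_{as}g_{a+1\,t}) = f_{a+1\,s}g_{at}$. It then remains to evaluate $\xi_a(f_{a+1\,s}g_{at})$ modulo $\Jb$ and modulo the left ideal generated by $\Jpb$ and the subspace \eqref{pqideal}, and to extract the scalar coefficient of $f_{a+1\,s}g_{at}$.

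The necessary toolkit is essentially the same as in the commuting case. From the relations \eqref{rel_dx} with $\th=-1$ one checks that $\widehat F_a$ acts on the generators by $\widehat F_a(p_{a+1\,j}) = -p_{aj}$, $\widehat F_a(q_{aj}) = q_{a+1\,j}$, and $\widehat F_a(p_{aj}) = \widehat F_a(q_{a+1\,j}) = 0$, and that $\widehat F_a$ is an ordinary derivation of the associative algebra $\HD$. Hence $\widehat F_a^r(f_{a+1\,s}g_{at})$ is a sum over ordered choices of $r$ of the $s+t$ available factors to ``demote,'' weighted by signs arising from the anticommutation of the Grassmann generators. Modulo the left ideal $\Jpb$ one also has the identification $E_a \equiv \zeta_n(E_a) = \sum_k q_{ak}p_{a+1\,k}$.

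Combining these, $E_a^r\widehat F_a^r(f_{a+1\,s}g_{at})$ modulo the ideal generated by \eqref{pqideal} receives contributions only from monomials in which each of the $r$ summands in $E_a^r$ ``promotes'' precisely one of the $r$ demoted factors back into its original slot in $f_{a+1\,s}g_{at}$. The matching uses the canonical $pq$-relations of \eqref{rel_dx} to absorb a $p_{aj}$ (respectively $q_{a+1\,j}$) against a factor $q_{ak}p_{a+1\,k}$ of $\zeta_n(E_a)$. A given summation index $k$ produces a nontrivial contribution only when both $p_{a+1\,k}$ and $q_{ak}$ already occur in $f_{a+1\,s}g_{at}$, i.e., when $k \in \{n-s+1,\dots,n\}\cap\{1,\dots,t\}$. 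This intersection has cardinality $s+t-n$ if $s+t>n$ and is empty if $s+t\le n$. Inserting the weights $(r!H_a^{(r)})^{-1}$ from \eqref{q1} and summing over $r$, the resulting combinatorial expression collapses to the ratio displayed in the statement when $s+t>n$; when $s+t\le n$, every term with $r\ge 1$ lies in the ideal \eqref{pqideal} (there is no valid matching index $k$), so only the $r=0$ contribution $f_{a+1\,s}g_{at}$ survives, giving the factor $1$.

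The main difficulty lies in the sign bookkeeping hidden in the preceding paragraph: verifying that the signs inherited from the Grassmann anticommutations combine with the $\delta$-contributions of the $pq$-relations so that the explicit alternating sum over ordered demotions collapses to the short rational function displayed. Following \cite{KN4}, the cleanest route is an induction on $\min(s,t)$, using the $pq$-relations to peel off one factor of either $f_{a+1\,s}$ or $g_{at}$ at a time and to relate the $(s,t)$ case to the $(s-1,t)$ and $(s,t-1)$ cases. The dichotomy at $s+t=n$ emerges naturally from the base of the induction, where one of the indices equals $0$ and the statement is immediate.
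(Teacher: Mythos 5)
Your plan is the same as the paper's (it, too, just defers to the proof of Lemma~5.7 in \cite{KN4}), and your setup is correct: $\si_a$ takes $f_{as}g_{a+1\,t}$ to $f_{a+1\,s}g_{at}$, $\widehat F_a$ is the even derivation $\sum_k[\,q_{a+1\,k}p_{ak},\cdot\,]$ with the action on generators you wrote, and $E_a$ may be replaced by $\zeta_n(E_a)=\sum_k q_{ak}p_{a+1\,k}$ modulo $\Jpb$ because $E_a-\zeta_n(E_a)=E_a\otimes1$ commutes with $\HD$. The identification of the critical index set $\{n-s+1,\dots,n\}\cap\{1,\dots,t\}$ and its cardinality $\max(s+t-n,0)$ is also correct.

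There are, however, two places where the sketch is imprecise in a way that matters. First, the assertion that \emph{individual} summands $q_{ak}p_{a+1\,k}$ of $\zeta_n(E_a)$ produce a nonzero contribution only when $k$ lies in the intersection is not literally true: a demotion of $p_{a+1\,j}$ alone (or of $q_{aj}$ alone) paired with the $E_a$-summand $k=j$ does produce a monomial congruent, modulo the ideal, to a nonzero multiple of $f_{a+1\,s}g_{at}$, and the same happens for the other type of demotion. What actually happens when $s+t\le n$ is that those two families of contributions \emph{cancel in pairs} (as one can see already for $s=t=1$, $n=2$). The collapse to the ratio in the statement thus rests on a delicate cancellation, not on each term vanishing term-by-term; your phrasing hides exactly the step that requires proof. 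Also note that each $q_{ak}p_{a+1\,k}$ has two Grassmann factors, so the claim that each of the $r$ summands of $E_a^r$ ``promotes precisely one'' demoted factor does not give a consistent degree count; what matches the degree count is that each $\de$-contraction eats one $E_a$-generator and one demoted factor, and only half of the $E_a$-generators participate.

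Second, the proposed induction on $\min(s,t)$ does not by itself account for the dichotomy at $s+t=n$: the base of that induction is $\min(s,t)=0$, in which case $s+t\le n$ automatically, so the case $s+t>n$ never appears ``at the base.'' You would instead need to track where the threshold $s+t=n$ is crossed along the inductive step and show that exactly then the recursion produces a new factor $\frac{H_a+s+t-n+1}{H_a+s+t-n}$; this requires the explicit recursive relation between the $(s,t)$, $(s-1,t)$ and $(s,t-1)$ cases, which you gesture at but do not write down.

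Finally, be careful when you eventually do the bookkeeping to record the $\Uhb$-factor on the \emph{right} of $f_{a+1\,s}g_{at}$, since $H_a$ does not commute with $f_{a+1\,s}g_{at}$; a direct left-to-right transcription of $(r!H_a^{(r)})^{-1}$ produces a shifted denominator and is a likely source of sign and shift errors.
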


We keep assuming that the weight $\mu$ satisfies the condition \eqref{notinso}. We also assume that $\nu_1,\dots,\nu_m \in \hc{0,1,\dots,n}$. Set
$$
\nu'_a =
\begin{cases}
n - \nu_a &\text{if} \quad a = 1 \sco p, \\
\nu_a &\text{if} \quad a = p+1 \sco m.
\end{cases}
$$
Let $(\mu^\ast_1,\dots,\mu^\ast_m)$ be the sequence of labels of weight $\mu+\rho+\frac{n}2\de'$. For each positive root $\eta=E_{bb}^\ast-E_{cc}^\ast\in\Delta^+$ with $1\le b<c \le m$ define a number $z_\eta\in\Cbb$ by
$$
z_\eta =
\begin{cases} \displaystyle
\frac{\la^\ast_b-\la^\ast_c}{\mu^\ast_b-\mu^\ast_c} &\text{if} \quad \nu'_b < \nu'_c, \\
\;\quad\; 1 &\text{otherwise}.
\end{cases}
$$
Let $v_{\mu}^\la$ denote the image of the vector
$$
\prodl{a=1}{p}\,f_{a\nu_a} \;\cdot\! \prodl{a=p+1}{m}\!\!g_{a\nu_a} \,\in\, \Ab
$$
in the quotient space $\Jb\,\backslash\,\Ab/\,\Ib_{\mu,\de^+}$.

\begin{prop} \label{isisanticomm}
{\rm\,\,(i)}
The vector $v_{\mu}^\la$ does not belong to the zero coset in $\Jb\,\backslash\,\Ab/\,\Ib_{\mu,\de^+}$. \\
{\rm\,\,(ii)}
The vector $v_{\mu}^\la$ is of weight $\lambda$ under the action of\/ $\h$ on\/
$\Jb\,\backslash\,\Ab/\,\Ib_{\mu,\de^+}$ and is of highest weight with respect to the action of $\Y(\gl_n)$ on the same quotient space. \\
{\rm\,\,(iii)}
For any\/ $\si\in\Sg_m$ the operator \eqref{bbjioper} determined by\/ $\xic_\si$ maps the vector\/
$v_{\mu}^\la$ to the image in\/ $\Jb\,\backslash\,\Ab/\,\Ib_{\si\circ\mu,\si(\de^+)}$ of\/
$\si(v_{\mu}^\la)\in\Ab$ multiplied by $\prods{\eta\in\De_\si} \!z_\eta$.
\end{prop}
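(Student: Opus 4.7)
The plan is to mirror the proof of Proposition~\ref{isiscomm}, replacing the three ``normalization'' lemmas for commuting variables with the Grassmann analogues Lemmas~\ref{antippnorma}--\ref{antipqnorma} and adjusting the arithmetic for $\th=-1$. Part~(i) will be immediate from the definition of $\Ib_{\mu,\de^+}$: this ideal is generated by $\Jp$, the Cartan relations $E_{aa}-\zeta_n(E_{aa})-\mu_a$, and the operators $\d_{ak}$; since $p_{ak}=x_{ak}$ for $a\le p$ and $q_{ak}=x_{ak}$ for $a>p$, the representative of $v_{\mu}^\la$ is a nonzero Grassmann monomial in the $x$-variables and hence does not meet the ideal.

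For part~(ii) I would invoke Corollary~\ref{bimequiv} to realize the $\Y(\gl_n)$-module $\Ec_{p,q}(M_\mu)_\n$ as the tensor product in \eqref{standard}. Under this identification the representative of $v_{\mu}^\la$ becomes the external product of the monomials $x_{a,n-\nu_a+1}\cdots x_{an}$ for $a\le p$ and $x_{a1}\cdots x_{a,\nu_a}$ for $a>p$. A short calculation with the anticommutation relations shows that each such monomial is a highest-weight vector in the respective factor $\TPhi_{\mu_a+\rho_a}^{\nu_a}$ or $\Phi_{\mu_a+\rho_a}^{\nu_a}$, and the usual Hopf-algebra argument applied to the comultiplication \eqref{1.33} then yields $T_{ij}(u)\,v_{\mu}^\la=0$ for $i<j$. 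The $\h$-weight is computed from $E_{aa}\equiv\mu_a+\zeta_n(E_{aa})$ modulo $\Ib_{\mu,\de^+}$: the element $\zeta_n(E_{aa})$ acts on each tensor factor at position $a$ as $\th n/2$ plus the number operator on the row-$a$ variables, which on our monomial equals $n-\nu_a$ when $a\le p$ and $\nu_a$ when $a>p$. Combining these via \eqref{nu} returns the eigenvalue $\la_a$.

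Part~(iii) I would prove by induction on $\ell(\si)$, exactly as in the symmetric case. For the inductive step one picks a simple reflection $\si_a$ with $\ell(\si_a\si)=\ell(\si)+1$, so that $\De_{\si_a\si}=\De_\si\sqcup\{\eta\}$ with $\eta=\si^{-1}(\eta_a)=E^\ast_{bb}-E^\ast_{cc}$, $b<c$. The argument from the proof of Theorem~\ref{proposition4.5} goes through unchanged with $\ka=\mu+\tfrac n2\de'$, giving $H_a\equiv-\mu^\ast_b+\mu^\ast_c-1$ modulo $\tilde\I_{(\si_a\si)\circ\ka,\,(\si_a\si)(\de^+)}$. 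Three sub-cases then arise according to whether both $b,c\le p$, both $b,c>p$, or $b\le p<c$; in each sub-case $\si(v_{\mu}^\la)$ contains at positions $a,a+1$ a factor of the form $f_{a\nu_b}f_{a+1\,\nu_c}$, $g_{a\nu_b}g_{a+1\,\nu_c}$, or $f_{a\nu_b}g_{a+1\,\nu_c}$ respectively, and we apply Lemma~\ref{antippnorma}, \ref{antiqqnorma}, or \ref{antipqnorma} with $s=\nu_b$ and $t=\nu_c$.

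The main obstacle will be matching the three lemma factors with the uniform rational expression for $z_\eta$: unlike the symmetric case, where the products $\prod_r$ survive, here the Grassmann dimension bound $s+t\le n$ in Lemma~\ref{antipqnorma} collapses everything to a single ratio. The key input is the identity $\la^\ast_a-\mu^\ast_a=\nu'_a-n/2$, which follows from \eqref{nu} with $\th=-1$ together with the definitions of $\mu^\ast$ and $\nu'$. Combined with $H_a\equiv-\mu^\ast_b+\mu^\ast_c-1$ this identity converts each of the three lemma factors into $(\la^\ast_b-\la^\ast_c)/(\mu^\ast_b-\mu^\ast_c)$ whenever that factor is nontrivial; at the same time the three nontriviality conditions $s>t$, $s<t$, and $s+t>n$ translate uniformly into the single inequality $\nu'_b<\nu'_c$ appearing in the definition of $z_\eta$. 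Verifying these coincidences, especially in the mixed Case~III, is the only point where the skew-symmetric argument visibly departs from the symmetric one, and it completes the induction step.
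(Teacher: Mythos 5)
Your proposal is correct and follows essentially the same route as the paper's own proof: part (i) directly from the ideal's generators, part (ii) via Corollary~\ref{bimequiv} together with the $\h$-weight computation and the coproduct argument, and part (iii) by induction on $\ell(\si)$ using $\ka=\mu+\tfrac n2\de'$, the substitution $H_a\equiv-\mu^\ast_b+\mu^\ast_c-1$, and Lemmas~\ref{antippnorma}--\ref{antipqnorma} in the three sub-cases. Your identity $\la^\ast_a-\mu^\ast_a=\nu'_a-n/2$ and the observation that the three nontriviality conditions all reduce to $\nu'_b<\nu'_c$ are exactly the arithmetic behind the paper's displayed equalities; you even silently correct the paper's stray ``with $\th=1$'' before \eqref{kappa}, which should read $\th=-1$.
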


\begin{proof}
Part (i) follows directly from the definition of the ideal $\Ib_{\mu,\de^+}$. Let us now prove Part (ii). Subalgebra $\h$ acts on the quotient space $\Jb\,\backslash\,\Ab/\,\Ib_{\mu,\de^+}$ via left multiplication on $\Ab$. Let symbol $\equiv$ denote the equalities in $\Ab$ modulo the left ideal $\Ib_{\mu,\de^+}$. Then we have
\begin{gather*}
E_{aa} v_{\mu}^\la \;=\;
v_{\mu}^\la E_{aa} + \hs{\zeta_n(E_{aa})\,,\,v_{\mu}^\la} \,=\;
v_{\mu}^\la(E_{aa}\mp\nu_a) \;\equiv\;
v_{\mu}^\la\hr{\zeta_n(E_{aa})+\mu_a\mp\nu_a}
\;\equiv \\[4pt] \equiv\;
v_{\mu}^\la\hr{\pm\frac{n}2+\mu_a\mp\nu_a} \;=\;
\la_a v_{\mu}^\la
\end{gather*}
where one should choose the upper sign for $a=1,\dots,p$\/ and the lower sign for $a=p+1,\dots,m$. Next, $\Y(\gl_n)$-modules $\Jb\,\backslash\,\Ab/\,\Ib_{\mu,\de^+}$ and $\Ec_{p,q}(M_\mu)_\n$ are isomorphic. Recall that $T_{ij}(u)$ acts on $\Ec_{p,q}(M_\mu)_\n$ with
the help of the comultiplication $\De$. Now, Corollary~\ref{bimequiv} and formulas~\eqref{Tij} imply that $T_{ij}(u)v_{\mu}^\la = 0$ for all $1 \le i<j \le n$. Moreover, one can check that
$$
T_{ii}(u)v_\mu^\la = v_\mu^\la \cdot
\prods{a \colon \nu'_a\ge i}\frac{u-\mu_a-\rho_a+1}{u-\mu_a-\rho_a}.
$$
Therefore, $v_{\mu}^\la$ is a highest weight vector with respect to the action of $\Y(\gl_n)$ on the quotient space $\Jb\,\backslash\,\Ab/\,\Ib_{\mu,\de^+}$.

We will prove Part (iii) by induction on the length of a reduced decomposition of $\si$. If $\si$ is the identity element of $\Sg_m$, then the required statement is tautological. Now suppose that for some $\si\in\Sg_m$ the statement of (iii) is true. Take any simple reflection $\si_a\in\Sg_m$ with $1\le a\le m-1$ such that $\si_a\si$ has a longer reduced decomposition in terms of $\si_1,\dots,\s_m$ than $\si$.

Consider the simple root $\eta_a$, corresponding to the reflection $\si_a$. Let $\eta=\si^{-1}(\eta_a)$ then
$$
\si_a\si(\eta)=\si_a(\eta_a)=-\eta_a\notin\De^+.
$$
Thus $\De_{\si_a\si}=\De_\si\sqcup\hc{\eta}$. Let $\ka\in\h^\ast$ be the weight with the labels determined by \eqref{kappa} with $\th=1$. Using the proof of Theorem~\ref{proposition4.5}, we get the equality of two left ideals of the algebra $\A$,
$$
\Ib_{\ts(\si_a\si)\circ\mu\ts,\ts(\si_a\si)(\de^+)} \,=\, \tilde\I_{\ts(\si_a\si)\circ\ka\ts,\ts(\si_a\si)(\de^+)}.
$$
Modulo the second of these two ideals the element $H_a$ equals
\be
\notag
((\si_a\si)\circ\ka)(H_a) \,=\, (\si_a\si(\ka+\rho)-\rho)(H_a) \,=\, (\ka+\rho)(\si^{-1}\si_a(H_a))-\rho(H_a) \,=\,  \\[8pt]
-(\ka+\rho)(\si^{-1}(H_a))-1 \,=\, -(\ka+\rho)(H_\eta)-1 \,=\, -\mu^\ast_b+\mu^\ast_c-1.
\ee
Here $H_\eta = \si^{-1}(H_a)$ is the coroot corresponding to the root $\eta$, and we use the standard bilinear form on $\h^\ast$.

Let us use the statement of (iii) as the induction assumption. Denote $\de=\si(\de^+)$. Consider three cases.

I. Let $b,c = 1,\dots,p$, then $\de_a = \de'_a$ and $\de_{a+1} = \de'_{a+1}$. Hence,
$$
\si\hr{v_{\mu}^\la} = f_{a\nu_b}f_{a+1\,\nu_c}Y
$$
where $Y$ is an element of the subalgebra of $\GD$ generated by all $x_{dk}$ and $\d_{dk}$ with $d\ne a,\,a+1$. Now we apply the Lemma~\ref{antippnorma} with $s=\nu_b$ and $t=\nu_c$. After substitution $-\mu^\ast_b + \mu^\ast_c - 1$ for $H_a$, the fraction in the lemma turns into
$$
\frac{-\mu^\ast_b+\mu^\ast_c+\nu_b-\nu_c}{-\mu^\ast_b+\mu^\ast_c} \;=\; \frac{\la^\ast_b-\la^\ast_c}{\mu^\ast_b-\mu^\ast_c}.
$$

II. Let $b,c = p+1,\dots,m$, then $\de_a = -\de'_a$ and $\de_{a+1} = -\de'_{a+1}$. Hence,
$$
\si\hr{v_{\mu}^\la} = g_{a\nu_b}g_{a+1\,\nu_c}Y
$$
where $Y$ is an element of the subalgebra of $\GD$ generated by all $x_{dk}$ and $\d_{dk}$ with $d\ne a,\,a+1$. Now we apply the Lemma~\ref{antiqqnorma} with $s=\nu_b$ and $t=\nu_c$. After substitution $-\mu^\ast_b + \mu^\ast_c - 1$ for $H_a$, the fraction in the lemma turns into
$$
\frac{-\mu^\ast_b+\mu^\ast_c+\nu_c-\nu_b}{-\mu^\ast_b+\mu^\ast_c} \;=\; \frac{\la^\ast_b-\la^\ast_c}{\mu^\ast_b-\mu^\ast_c}.
$$

III. Let $b=1,\dots,p, \; c=p+1,\dots,m$, then $\de_a=\de'_a$ and $\de_{a+1}=-\de'_{a+1}$. Hence,
$$
\si\hr{v_{\mu}^\la} = f_{a\nu_b}g_{a+1\,\nu_c}Y
$$
where $Y$ is an element of the subalgebra of $\GD$ generated by all $x_{dk}$ and $\d_{dk}$ with $d\ne a,\,a+1$. Now we apply the Lemma~\ref{antipqnorma} with $s=\nu_b$ and $t=\nu_c$. When $n=1$, after substitution $-\mu^\ast_b + \mu^\ast_c - 1$ for $H_a$, the fraction in the lemma turns into
$$
\frac{-\mu^\ast_b+\mu^\ast_c+\nu_b+\nu_c-n}{-\mu^\ast_b+\mu^\ast_c} \;=\; \frac{\la^\ast_b-\la^\ast_c}{\mu^\ast_b-\mu^\ast_c}.
$$

Thus, in the three cases considered above the operator
$$
\xic_{\si_a\si}\colon\Jb\,\backslash\,\Ab\,/\,\Ib_{\ts\mu,\de^+} \to\, \Ib_{(\si_a\si)\circ\mu\ts,\ts(\si_a\si)(\de^+)}
$$
maps the vector $v_{\mu}^\la$ to the image of
$$
\si_a\si(v_{\mu}^\la) \,\,\cdot\!\!\prods{\eta\in\De_{\si_a\si}} \!\!z_\eta \in\Ab
$$
in the vector space $\Jb\,\backslash\,\Ab\,/\,\Ib_{(\si_a\si)\circ\mu\ts,\ts(\si_a\si)(\de^+)}$. This observation makes the induction step.
\end{proof}

\section*{\bf\normalsize 5.\ Conjecture}
\setcounter{section}{5}
\setcounter{theorem}{0}
\setcounter{equation}{0}

The formula \eqref{q1} yields that for any vector $v\in\Jb\,\backslash\,\Ab\,/\,\Ib_{\ts\mu,\de^+}$ the image of $v$ under the operator $\xic_\si$ is well defined unless $\,(H_a+1-r)v = 0\,$ for some integer $r$. As we have shown in the previous section $H_a$ acts as $\,-\mu^\ast_b+\mu^\ast_c-1\,$ for some $\,1\le b<c\le m\,$ on the target module in the mapping~\eqref{distoperla}. It follows from relations \eqref{Ore} that after commuting $H_a$ with $v$, we will get $v(\la^\ast_b-\la^\ast_c+r)$. Therefore, the operators $\xic_\si$ are well defined unless
\beq \label{weight_cond}
\la_b^\ast - \la_c^\ast \;=\; -1,-2,\dots \qquad\text{for some}\qquad 1\le b<c\le m.
\eeq
In the latter case it can be shown that a factor of module \eqref{source} by the kernel of operator $\xic_{\si_0}$ is an irreducible (and non-zero under certain condition on the numbers $\nu_1,\dots,\nu_m$) $\Y(\gl_n)$-module, where $\si_0$ is the longest element of the Weyl group (see \cite{KN5}).

Recall the operator $\xi'_\si$ defined right before Theorem~\ref{th_inter}. Now, we state
\begin{conj} \label{conj}
Every irreducible finite-dimensional rational module of the Yangian $\Y(\gl_n)$ may be obtained as the factor of the module \eqref{rational_source} over the kernel of the intertwining operator $\xi'_{\si_0}$ for some weights $\mu$ and $\la$, with $\la$ satisfying condition \eqref{weight_cond}.
\end{conj}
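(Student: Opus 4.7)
The plan is to combine Drinfeld's classification of irreducible finite-dimensional $\Y(\gl_n)$-modules by their Drinfeld polynomials with the explicit formulas for the highest-weight eigenvalues $T_{ii}(u)v_\mu^\la$ given in Propositions~\ref{isiscomm}(ii) and~\ref{isisanticomm}(ii). In outline: first establish irreducibility of the proposed factor module, then read off its Drinfeld polynomials in terms of $\mu$, $\la$, $\nu_1\lcd\nu_m$, $p$ and $q$, and finally show that every admissible collection of Drinfeld polynomials for an irreducible rational module can be realized by suitably chosen parameters.

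For irreducibility, I would adapt the argument of~\cite{KN5} from the polynomial case. Part~(iii) of Propositions~\ref{isiscomm} and~\ref{isisanticomm} identifies the image of the highest-weight vector under $\xic_{\si_0}$ as a nonzero scalar multiple of $\si_0(v_\mu^\la)$, and condition~\eqref{weight_cond} is precisely what guarantees that the image is well defined after a renormalization that cancels the simple poles in $\xic_{\si_0}$. One would then verify that the cyclic $\Y(\gl_n)$-submodule generated by this image exhausts the quotient and admits no proper nonzero submodule, using the explicit $T_{ii}(u)$-eigenvalues computed in Part~(ii) of the same propositions to rule out additional highest-weight vectors.

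To handle surjectivity, take an arbitrary irreducible finite-dimensional rational module $L$ with Drinfeld polynomials $P_1(u)\lcd P_{n-1}(u)$ and a rational normalizing factor $\La_{nn}(u)$. Rationality of $L$ forces the roots of the $P_i(u)$ and the zeros and poles of $\La_{nn}(u)$ to organize into finitely many strings of two opposite types, corresponding respectively to the positive factors $\Phi_z^{\nu_a}$ and the negative factors $\Phi_z^{-\nu_a}$ appearing in~\eqref{rational_source}. Matching the skew-diagram parametrization of Nazarov~\cite{N} against the eigenvalue formulas, one reconstructs $\mu_a+\rho_a$ and $\nu_a$ directly from this combinatorial data, with $p$ and $q$ counting the two types of strings; $\mu$ itself may be perturbed, using the automorphism~\eqref{fut}, within its similarity class to satisfy~\eqref{notinso}.

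The hard part will be completing this surjectivity argument, namely showing that every irreducible rational $\Y(\gl_n)$-module is captured by some tuple of parameters, and that the image of $\xi'_{\si_0}$ does not accidentally collapse to zero or to a strictly smaller module. The polynomial analogue in~\cite{KNP} already rests on delicate Young-tableau combinatorics and a careful analysis of when consecutive simple reflections in the Zhelobenko cocycle act nondegenerately. In the rational case one must additionally coordinate the ordering of the $p$ dual-type tensor factors with the $q$ ordinary ones, handle potential cancellations between $\Phi_z^{N}$ and $\Phi_z^{-N}$ factors whose strings overlap, and verify that~\eqref{weight_cond} can always be arranged by an appropriate choice of $\la$ within the given similarity class. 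This combinatorial bookkeeping, together with the explicit identification of the kernel of $\xi'_{\si_0}$ in terms of singular vectors, is the substance of the argument to be carried out in~\cite{KNS}.
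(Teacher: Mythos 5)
This statement is stated in the paper as a \emph{conjecture}, not a theorem: the paper gives no proof and explicitly defers the question to the forthcoming work \cite{KNS}. There is therefore no proof in the paper for your attempt to be measured against. Your text is also not a proof but a research outline, and you acknowledge as much in your final paragraph: the surjectivity argument (that every admissible set of Drinfeld polynomials for a rational module is realized), the identification of the kernel of $\xi'_{\si_0}$, the verification that the image does not collapse, and the coordination of the $p$ dual-type and $q$ ordinary tensor factors are all left as work to be done, which is precisely the content the paper defers to \cite{KNS}.

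That said, the ingredients you assemble are the natural ones and are consistent with the hints the paper itself gives: Propositions~\ref{isiscomm} and~\ref{isisanticomm} for the highest-weight eigenvalues, Drinfeld's classification to translate into Drinfeld polynomials, \cite{KN5} for the irreducibility of the image of $\xic_{\si_0}$ under condition~\eqref{weight_cond}, \cite{N} as a benchmark for the rational/skew-diagram combinatorics, and the automorphism~\eqref{fut} to move within a similarity class so that~\eqref{notinso} holds. One inaccuracy worth flagging: you describe~\eqref{weight_cond} as the condition that guarantees the operator $\xic_{\si_0}$ is "well defined after a renormalization that cancels the simple poles." The paper says the opposite: the Zhelobenko operators are well defined \emph{unless}~\eqref{weight_cond} holds, and in the degenerate case~\eqref{weight_cond} one passes to the quotient of~\eqref{rational_source} by $\ker\xi'_{\si_0}$, which is then irreducible (and nonzero under additional constraints on $\nu_1\lcd\nu_m$) by the results of \cite{KN5}. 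So~\eqref{weight_cond} is a degeneracy condition forcing a nontrivial kernel, not a regularity or renormalization condition. With that correction, your outline is a reasonable reconstruction of the intended strategy, but it does not constitute a proof of the conjecture and cannot, since the paper itself leaves the conjecture open.
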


\section*{\bf\normalsize Appendix}

\renewcommand{\theequation}{A.{\arabic{equation}}}
\renewcommand{\thetheorem}{A.{\arabic{theorem}}}
\setcounter{theorem}{0}
\setcounter{equation}{0}

Let us first prove some properties of the matrix $X(u)$ defined by \eqref{Xu}.

\begin{prop} \label{A1}
\hskip5pt$\rm i)$ The following relation holds:
\beq \label{XX}
(u-v)\cdot X(u) X(v) = X(v) - X(u)\,;
\eeq
$\rm ii)$ The elements $X_{ab}(u)$ satisfy the Yangian relation:
\beq \label{X_yang}
(u-v)\cdot[X_{ab}(u),X_{cd}(v)]=\th(X_{cb}(u)X_{ad}(v)-X_{cb}(v)X_{ad}(u)).
\eeq
\end{prop}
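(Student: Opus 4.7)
For part (i), the key is a resolvent-type identity. The matrices $X(u)^{-1} = u + \th\Ep$ and $X(v)^{-1} = v + \th\Ep$ differ only by the scalar matrix $(u-v) I$, hence they commute and
\[
X(u)^{-1} - X(v)^{-1} = (u-v)\, I.
\]
Since $X(u)^{-1}$ and $X(v)^{-1}$ admit genuine two-sided inverses as formal power series in $u^{-1}$ and $v^{-1}$ with entries in $\U(\gl_m)$, I can multiply on the left by $X(v)$ and on the right by $X(u)$ to obtain $X(v) - X(u) = (u-v)\, X(v) X(u)$. Running the same manipulation with $u$ and $v$ swapped shows $X(u) X(v) = X(v) X(u)$ as matrices, and rearranging yields \eqref{XX}.

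For part (ii), I first derive the commutator of $X_{ab}(u)$ with a single generator $E_{fe}\in\gl_m$. Writing $L(u) = u + \th\Ep$ so that $X(u) L(u) = L(u) X(u) = I$, the Leibniz rule applied to this identity gives
\[
[E_{fe},\, X(u)] = -X(u)\,[E_{fe}, L(u)]\, X(u) = -\th\, X(u)\,[E_{fe}, \Ep]\, X(u).
\]
A direct entrywise calculation using $[E_{ij}, E_{kl}] = \de_{jk} E_{il} - \de_{il} E_{kj}$ shows that $\th [E_{fe}, \Ep] = -[e_{fe}, L(u)]$ as matrices, where $e_{fe} \in M_m(\Cbb)$ is the scalar matrix unit. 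Substituting and cancelling with $L(u) X(u) = X(u) L(u) = I$ yields the clean identity $[E_{fe}, X(u)] = X(u)\, e_{fe} - e_{fe}\, X(u)$, i.e.
\[
[E_{fe}, X_{ab}(u)] = \de_{eb}\, X_{af}(u) - \de_{fa}\, X_{eb}(u).
\]
Expressing $L_{cd}(v) = v \de_{cd} + \th E_{dc}$, this translates into the commutator
\[
[X_{ab}(u), L_{cd}(v)] = \th\bigl(\de_{da}\, X_{cb}(u) - \de_{cb}\, X_{ad}(u)\bigr). \qquad (\ast)
\]

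To close the proof, I use the identity $L(u) = L(v) + (u-v) I$ together with $\sum_e L_{ce}(v) X_{ed}(v) = \de_{cd}$ to solve
\[
X_{cd}(v) = (v-u)^{-1}\Bigl(\de_{cd} - \sum_e L_{ce}(u)\, X_{ed}(v)\Bigr).
\]
Substituting this into $[X_{ab}(u), X_{cd}(v)]$, applying the Leibniz rule and inserting $(\ast)$, I obtain a recursive relation; grouping the $L_{ce}(u)$ terms with the surplus $(v-u)\de_{ce}$ consolidates them into $L_{ce}(v)$. Multiplying through on the left by $X_{fc}(v)$ and summing over $c$ then removes the $L(v)$ factor via $X(v) L(v) = I$. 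The two remaining sums $\sum_e X_{ae}(u) X_{ed}(v)$ and $\sum_c X_{fc}(v) X_{cb}(u)$ are each evaluated using part (i), and a short cancellation collapses the result to \eqref{X_yang}. The main technical hurdle is not any single hard step but the bookkeeping: tracking noncommutative products correctly and interpreting the matrix identity of part (i) componentwise with the right ordering of factors throughout the substitutions.
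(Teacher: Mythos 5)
Your proposal is correct, and for part (ii) it takes a genuinely different route from the paper's, though both pass through the same intermediate commutator. Part (i) is essentially identical in both proofs: multiply $(u+\th\Ep) - (v+\th\Ep) = (u-v)I$ by $X(u)$ and $X(v)$ from the two sides; your extra check that $X(u)X(v)=X(v)X(u)$ is harmless. For part (ii), both arguments reach the identity
\[
[X_{ab}(u),\,(v+\th\Ep)_{cd}] = \th\bigl(\de_{ad}X_{cb}(u) - \de_{bc}X_{ad}(u)\bigr),
\]
but they derive it and close from it differently. The paper gets it by writing out the elementary commutator $[(u+\th\Ep)_{ef},(v+\th\Ep)_{gh}]$ inside $\U(\gl_m)$ and conjugating it with $X(u)$ on both sides; you extract the dressing identity $[E_{fe}, X(u)] = X(u)\,e_{fe} - e_{fe}\,X(u)$ by Leibniz applied to $X(u)(u+\th\Ep)=I$, which is the cleaner and more conceptual of the two derivations (it records that $X(u)$ intertwines the adjoint action of $\gl_m$ with matrix conjugation). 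The closing steps diverge more sharply. The paper conjugates the intermediate commutator once more, this time with $X(v)$ on both sides; the $(v+\th\Ep)$ factors cancel, $[X_{ab}(u),X_{cd}(v)]$ appears directly, and part (i) finishes. You instead use the resolvent recursion $X_{cd}(v)=(v-u)^{-1}\bigl(\de_{cd}-\sum_e(u+\th\Ep)_{ce}X_{ed}(v)\bigr)$, expand by Leibniz, cancel the reappearing commutator from both sides, and only then apply a one-sided multiplication by $X(v)$ before invoking part (i). Both close correctly; the paper's two-sided sandwich is shorter and tidier, while your derivation of the intermediate commutator is the more transparent.
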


\begin{proof}
The part i) follows from equality
$$
(u-v) = \hr{u + \th\Ep} - \hr{v + \th\Ep}
$$
multiplied by $X(u)$ from the left and by $X(v)$ from the right. Let us start the proof of the part~ii) with equality
$$
\hs{\hr{u + \th\Ep}_{ef}, \hr{v + \th\Ep}_{gh}} = \de_{eh} E_{fg} - \de_{fg}E_{he}.
$$
We multiply the above equality by $X_{ae}(u)$ from the left, by $X_{fb}(u)$ from the right, and take a sum over indices $e,f:$
\begin{align*}
\suml{e,f=1}{m} &\hr{X_{ae}(u) \hr{u + \th\Ep}_{ef} \hr{v + \th\Ep}_{gh} X_{fb}(u) -
X_{ae}(u) \hr{v + \th\Ep}_{gh} \hr{u + \th\Ep}_{ef} X_{fb}(u)} = \\
= &\suml{e,f=1}{m} \hr{X_{ae}(u) \de_{eh} E_{fg} X_{fb}(u) - X_{ae}(u) \de_{fg} E_{he} X_{fb}(u)}.
\end{align*}
Thus, we get
$$
\hs{\hr{v + \th\Ep}_{gh}, X_{ab}(u)} = \suml{e,f=1}{m} \hr{X_{ah}(u) E_{fg} X_{fb}(u) - X_{ae}(u) E_{he} X_{gb}(u)}.
$$
Using that
$$
\suml{f=1}{m} E_{fg} X_{fb}(u) = \th\hr{\de_{bg} - u X_{gb}(u)}  \qquad\text{and}\qquad  \suml{e=1}{m} X_{ae}(u) E_{he} = \th\hr{\de_{ah} - u X_{ah}(u)},
$$
we obtain
$$
\hs{\hr{v + \th\Ep}_{gh}, X_{ab}(u)} = \th\hr{\de_{bg} X_{ah}(u) - \de_{ah} X_{gb}(u)}.
$$
Multiplying the above equation by $X_{cg}(v)$ from the left, by $X_{hd}(v)$ from the right, and taking a sum over indices $g,h$, we arrive to equality
$$
[X_{ab}(u), X_{cd}(v)] = \th\suml{g,h=1}{m}\hs{X_{cb}(v) X_{ah}(u) X_{hd}(v) - X_{cg}(v) X_{gb}(u) X_{ad}(v)}.
$$
Now using the result of the part i), we get
$$
(u-v) \cdot [X_{ab}(u), X_{cd}(v)] \;=\;
\th\Br{X_{cb}(v)\br{X_{ad}(v)-X_{ad}(u)}-\br{X_{cb}(v)-X_{cb}(u)}X_{ad}(v)}
$$
and the statement of the part ii) follows.
\end{proof}

{\flushleft\it Proof of Proposition~\ref{prop_bimod}:}

\vspace{5pt}
We prove the part i) by direct calculation. During the proof we will write $T_{ij}(u)$ instead of its image under $\al_m$ in the algebra $\U(\gl_m)\otimes\HD$. Using relations \eqref{E_rel2}, \eqref{E_rel3}, we get
\begin{align*}
(u-v)\cdot\hs{T_{ij}(u),T_{kl}(v)} = \\ =
\suml{a,b,c,d=1}{m} (u-v)\cdot
\Br{
&X_{ab}(u)X_{cd}(v) \otimes
(\th\Eh_{ck,bj}\Eh_{ai,dl} + \de_{bc}\de_{jk}\Eh_{ai,dl} - \th\de_{ab}\de_{ij}\Eh_{ck,dl}) - \\[-4pt] -
&X_{cd}(v)X_{ab}(u) \otimes
(\th\Eh_{ck,bj}\Eh_{ai,dl} + \de_{ad}\de_{il}\Eh_{ck,bj} - \th\de_{ab}\de_{ij}\Eh_{ck,dl})
} = \\ =
\suml{a,b,c,d=1}{m} (u-v)\cdot
\Br{
&[X_{ab}(u),X_{cd}(v)] \otimes \th(\Eh_{ck,bj}\Eh_{ai,dl} - \de_{ab}\de_{ij}\Eh_{ck,dl}) + \\[-4pt] +
&X_{ab}(u)X_{cd}(v) \otimes \de_{bc}\de_{jk}\Eh_{ai,dl} - X_{cd}(v)X_{ab}(u) \otimes \de_{ad}\de_{il}\Eh_{ck,bj}
}.
\end{align*}
Next, using relations \eqref{XX} and \eqref{X_yang}, we obtain
\begin{gather*}
(u-v)\cdot\hs{T_{ij}(u),T_{kl}(v)} = \\[4pt]
= \suml{a,b,c,d=1}{m} \bbr{\Br{X_{cb}(u)X_{ad}(v) - X_{cb}(v)X_{ad}(u)}
\otimes (\Eh_{ck,bj}\Eh_{ai,dl} - \de_{ab}\de_{ij}\Eh_{ck,dl}) + \\[-4pt]
+ (u-v) \cdot \Br{ X_{ab}(u)X_{cd}(v) \otimes \de_{bc}\de_{jk}\Eh_{ai,dl}
- X_{cd}(v)X_{ab}(u) \otimes \de_{ad}\de_{il}\Eh_{ck,bj}}}.
\end{gather*}
Now, using the definition of the homomorphism $\al_m$, we get
\begin{gather*}
\br{T_{kj}(u) - \de_{jk}} \br{T_{il}(v) - \de_{il}} -
\br{T_{kj}(v) - \de_{jk}} \br{T_{il}(u) - \de_{il}} - \\[6pt]
- \de_{ij} \suml{c,d=1}{m} \Br{\br{X(u)X(v)}_{cd} -
\br{X(v)X(u)}_{cd}} \otimes \Eh_{ck,dl} + \\[-4pt]
+ \de_{jk} \suml{a,d=1}{m} (u-v)\br{(X(u)X(v))_{ad}} \otimes \Eh_{ai,dl} +
\de_{il} \suml{b,c=1}{m} (u-v)\br{(X(v)X(u))_{cb}} \otimes \Eh_{ck,bj}.
\end{gather*}
It follows from relation \eqref{XX} that $X(u)X(v) = X(v)X(u).$ Therefore, we finish the proof of the part i) by the following calculations:
\begin{gather*}
(u-v)\cdot\hs{T_{ij}(u),T_{kl}(v)} \quad=\quad T_{kj}(u)T_{il}(v) - T_{kj}(v)T_{il}(u) \;-\\[12pt]
-\;\de_{jk} \br{T_{il}(v) - T_{il}(u)} \,-\, \de_{il}\br{T_{kj}(u) - T_{kj}(v)} \;+ \\[12pt]
+\;\de_{jk} \br{T_{il}(v) - T_{il}(u)} \,+\, \de_{il}\br{T_{kj}(u) - T_{kj}(v)} \;= \\[12pt]
= \; T_{kj}(u)T_{il}(v) - T_{kj}(v)T_{il}(u).
\end{gather*}
The part ii) is also proved by straightforward verification:
\begin{gather*}
\BS{ E_{cd} \otimes 1 + 1 \otimes \zeta_n(E_{cd}), T_{ij}(u) } = \\
= \;\BS{ E_{cd} \otimes 1, \suml{a,b=1}{m} X_{ab}(u) \otimes \Eh_{ai,bj} } \,+\,
\BS{ 1 \otimes \suml{k=1}{n} \Eh_{ck,dk}, \suml{a,b=1}{m} X_{ab}(u) \otimes \Eh_{ai,bj} } \;=
\end{gather*}
\begin{gather*}
= \suml{a,b=1}{m} \Br{ \de_{bd}X_{ac}(u) - \de_{ac}X_{db}(u) } \otimes \Eh_{ai,bj} \,+
\suml{k=1}{n} \suml{a,b=1}{m} \Br{X_{ab}(u) \otimes \hr{\de_{ad}\de_{ik}\Eh_{ck,bj} - \de_{bc}\de_{jk}\Eh_{ai,dk}}} = \\
= \;\suml{a=1}{m} X_{ac}(u) \otimes \Eh_{ai,dj} \,-\, \suml{b=1}{m} X_{db}(u) \otimes \Eh_{ci,bj} \,+ \suml{b=1}{m} X_{db}(u) \otimes \Eh_{ci,bj} \,-\, \suml{a=1}{m} X_{ac}(u) \otimes \Eh_{ai,dj} \;=\, 0.
\end{gather*}
$\hfill\square$

\section*{\bf\normalsize Acknowledgements}

I want to express my gratitude to Sergey Khoroshkin for introducing me to this subject, constant interest to my progress, and his many invaluable remarks and corrections. I am also very grateful to Maxim Nazarov for many fruitful discussions. Finally, I would like to thank the reviewer for many useful comments. The author was supported by RFBR grant 11-01-00962, joint CNRS-RFBR grant 09-01-93106-NCNIL, Federal Agency for Science and Innovations of Russian Federation under contract 02.740.11.5194, and by Russian Federal Nuclear Energy Agency under contract H.4e.45.90.11.1059.

\end{document}